\newcommand{\del}{\partial}
\newcommand{\iden}{\mathbbm{1}}
\renewcommand{\theta}{\vartheta}
\renewcommand{\phi}{\varphi}
\newcommand{\vecc}[2]{\left ( \begin{array}{c}#1\\#2\\ \end{array}\right )}
\newcommand{\veccc}[3]{\begin{pmatrix}#1\\#2\\#3 \end{pmatrix}}
\newcommand{\dd}{\mathrm{d}}
\renewcommand{\and}{\wedge}
\renewcommand{\div}{\text{div\,}}
\renewcommand{\vec}{\mathbf}
\newcommand{\id}{\mathbb{1}}
\newcommand{\dt}{\Delta t}
\newcommand{\dx}{\Delta x}
\newcommand{\dy}{\Delta y}
\definecolor{mygray}{rgb}{0.5,0.5,0.5}
\definecolor{mymauve}{rgb}{0.58,0,0.82}
\definecolor{mygreen}{rgb}{0,0.6,0}
\newtheorem{theorem}{Theorem}
\newtheorem{proposition}[theorem]{Proposition}
\newtheorem{remark}[theorem]{Remark}
\newtheorem{example}[theorem]{Example}
\title{  Stationarity preserving   nodal Finite Element  methods for  multi-dimensional linear hyperbolic balance laws\\
via a  Global Flux quadrature formulation}
\author{Wasilij Barsukow$^a$, Mario Ricchiuto$^b$, Davide Torlo$^c$\\[20pt]
$^a$Institut de Math\'ematiques de Bordeaux (IMB),  CNRS UMR 5251, \\ 351 Cours de la Lib\'eration, 33405 Talence, France - {\tt wasilij.barsukow@math.u-bordeaux.fr}\\[5pt]
$^b$INRIA, Univ. Bordeaux, CNRS, Bordeaux INP, IMB, UMR 5251, \\ 200 Avenue de la Vieille Tour, 33405 Talence cedex, France - {\tt mario.ricchiuto@inria.fr}\\[5pt]
$^c$Dipartimento di Matematica G. Castelnuovo, \\ Universit\`a di Roma La Sapienza, Roma, Italy - {\tt davide.torlo@uniroma1.it}
}
\begin{document}

\maketitle
\begin{abstract} 
We consider linear, hyperbolic systems of balance laws in several space dimensions. They possess non-trivial steady states, which result from the  equilibrium between derivatives 
of the  unknowns in different directions, and the sources. Standard numerical methods fail to account  for this equilibrium, and 
include stabilization that  destroys it.  This manifests itself in a diffusion of states  that are supposed to remain stationary. 
We derive new stabilized high-order Finite Element methods based on a Global Flux quadrature: we reformulate the entire spatial operator as a mixed derivative of a single quantity, referred to as global flux. All spatial derivatives and the sources are thus treated simultaneously, and our methods are
stationarity preserving. 
Additionally, when this formulation is combined with interpolation on Gauss-Lobatto nodes,  
 the new methods are super-convergent at steady state. Formal consistency estimates, and strategies to construct well-prepared initial data are 
provided. The  numerical results confirm the theoretical predictions, and show the tremendous benefits of the new formulation.
\end{abstract}



\section{Introduction}\label{sec:intro}

This paper focuses on the discretization of multi-dimensional hyperbolic balance laws.   
Our final aim is  to be able to treat non-linear PDEs such as the Euler  equations with gravity, friction or chemical reactions,
or the shallow water equations with bathymetric, Coriolis, and wind stress source terms.
Having in mind this objective,  in this work we consider the  simpler setting of  linear hyperbolic balance laws,
and in particular the linear wave equation in first-order form with general source terms: 
\begin{align}
 \begin{cases}
	\del_t \vec v + \nabla p = \vec S_{\vec v}, \qquad \vec v \colon \Omega \subset \mathbb R^d \to \mathbb R^d,\\
	\del_t p + \nabla \cdot \vec  v = S_p, \qquad p \colon \Omega \subset \mathbb R^d \to \mathbb R.
	\end{cases} \label{eq:acoustic-source-alld}
\end{align}
In $d=2$ space dimensions, we write
\begin{align}\label{eq:acoustic-source}
	\begin{cases}
 \del_t u + \del_x p = S_u, &  \\
 \del_t v + \del_y p = S_v, \\
 \del_t p + \del_x u + \del_y v = S_p, 
	\end{cases}
\end{align}
for $u,v,p \colon \Omega \subset \mathbb R^2 \to \mathbb R$, and
with 
$\vec {v} = (u,v)$ and $\vec S_{\vec v}=(S_u,S_v)$. 
These equations can be used as toy-models for more complex systems such as the shallow water equations, Euler equations, and  magneto-hydrodynamics.

We also introduce the compact form
\begin{align}
 \del_t \mathsf{Q} + \nabla\cdot\mathsf{F} &= \mathsf{S} \;,\quad
   \mathsf{F} = (J^x\mathsf{Q} \;, J^y \mathsf{Q})
\end{align}
 of the system \eqref{eq:acoustic-source-alld}
 with, in two space dimensions, 
\begin{align}\label{eq:acoustics_matrix}
  \mathsf{Q} &= \veccc{u}{v}{p}, & J^x &= \left( \begin{array}{ccc} 0 & 0 & 1 \\ 0 & 0 &  0 \\ 1 & 0 & 0 \end{array} \right ), & J^y &= \left( \begin{array}{ccc} 0 & 0 & 0 \\ 0 & 0 &  1 \\ 0 & 1 & 0 \end{array} \right ), &\mathsf{S}  & =  \veccc{S_u}{S_v}{S_p}.
\end{align}
System \eqref{eq:acoustic-source-alld} admits non-trivial  stationary states  governed  by the conditions
\begin{align}
  \nabla \cdot \vec v &= S_p,  \label{eq:steady-constraint-div} \\
    \nabla p&=   \vec S_{\vec v}.     \label{eq:steady-constraint-grad}
\end{align}

\subsection{Well balanced and stationarity preserving methods in multiple dimensions}

An important challenge in numerics for hyperbolic PDEs is to find a stabilization for numerical methods such that the solution is not entirely dissipated in long-time simulations.
Typical applications of such PDEs require studying the long-time evolution of perturbations of  stationary states, without any spurious numerical waves. 
To address this challenge, we aim at numerical methods capable of providing enhanced approximations of genuinely multi-dimensional stationary states.

This is already a complicated task when considering the homogeneous version of \eqref{eq:acoustic-source}. 
As we will explain shortly,  with classical  stabilization strategies
data needs to fulfill $\del_x u = 0$ and $\del_y v = 0$ to remain stationary, i.e. classical methods keep stationary only an insignificant subset of all divergence--free vector fields $\del_x u + \del_y v = 0$.
Numerical methods whose stationary states are described by a discretization of $\nabla \cdot \vec v = 0$ without further constraints possess a rich set of stationary states and are called \emph{stationarity preserving}  \cite{barsukow17a}. One can show that the low Mach number limit of the Euler equations is related to the long-time limit of linear acoustics \cite{jung2022steady,jung2024behavior}, and that stationarity preserving methods are also involution preserving \cite{barsukow17a}.

Classical stabilization operators  are inspired by the smoothing associated to parabolic regularization. To first order, for the homogeneous vector equation in \eqref{eq:acoustic-source-alld}, in the simplest setting
the stabilization is of the form 
\begin{align}
\del_t u + \del_x p = \dfrac{h}{2}  \del_x^2 u, \\
\del_t v + \del_y p = \dfrac{h}{2}  \del_y^2 v ,
\end{align}
having denoted by $h$ the mesh size. One can immediately verify that,
unless $\partial_x u =\partial_y v=0$, divergence--free vector fields are not preserved as $\nabla \cdot \vec v = 0$ is not a stationary solution of the stabilized problem. The difficulties essentially come about because  one-dimensional   stabilization is used in a multi-dimensional context.
A way to re-establish consistency is to account for the multidimensional coupling of the velocity components,
and   replace the above stabilization by a gradient of the divergence. This leads to a vector Laplacian stabilization of the form \cite{morton01,sidilkover02,jeltsch06,mishra09preprint,lung14,barsukow17a,brt25}
$$
\del_t \vec v + \nabla p = \dfrac{h}{2}     \nabla(\nabla\cdot \vec v) = \dfrac{h}{2}   \vecc{\del_x^2 u + \del_x\del_y v}{\del_x \del_y u + \del_y^2 v}.
$$
Observe that this way, mixed derivatives $\del_{xy}$ appear, i.e., this is a truly multi-dimensional stabilization,  and it reduces to the previous stabilization in purely one-dimensional situations.
While modified equation analysis gives a hint of what needs to be done in the fully discrete setting, it is not sufficient. Instead, as shown in~\cite{brt25}, one needs to ensure that the same discrete divergence is in the kernel of both
the unstabilized discrete divergence operator, and of the discrete stabilization. In \cite{barsukow17a} a general way of achieving this in the context of Finite Difference methods was presented, and in \cite{brt25} this was achieved for the streamline upwind Petrov-Galerkin (SUPG) method.


For balance laws, the stabilization needs to take into account the source terms, since it is no longer the solenoidal constraint that needs to be satisfied, but
\eqref{eq:steady-constraint-div}. The corresponding generalization of conformal approaches 
focused on the solenoidal condition is far from trivial. This is why most effective
multi-dimensional stationarity preserving   schemes  for systems of balance laws to this date rely on correction techniques
that require the precise knowledge of the steady solution \cite{GCD18,BERBERICH2021104858,math10010015,birke2024,DUMBSER2024112875}.

\subsection{Multidimensional Global Flux quadrature}\label{sec:GF_multiD_intro}
In this work, we extend \cite{brt25}, which is dedicated to preserving divergence-free equilibria, to linear problems with source terms.
Our approach exploits the Global Flux (GF) idea~\cite{gascon2001construction,bouchut2004frontal,cheng2019new,chertock2022well,ciallella2022global}, 
originally tailored for one-dimensional balance laws, and developed for 
Finite Volume~\cite{gascon2001construction,cheng2019new,chertock2022well,ciallella2022global}, Discontinuous Galerkin~\cite{MANTRI2023112673}, 
Finite Differences~\cite{KAZOLEA2025106646}, and stabilized Finite Element methods \cite{mra24}. Relations with other approaches, such as e.g. Residual Distribution
schemes, are discussed in~\cite{Abgrall2022}.
 
%

For one-dimensional hyperbolic balance laws of the form 
\begin{equation}
	\partial_t \mathsf{Q} +\partial_x \mathsf{F}(\mathsf{Q}) = S(x,\mathsf{Q}) \label{eq:balancelawgeneral}
\end{equation}
the GF approach amounts to discretizing, instead of \eqref{eq:balancelawgeneral}, the equivalent problem 
\begin{equation}
	\partial_t \mathsf{Q} +\partial_x \mathsf G(x,\mathsf{Q})=0, \quad \text{with } \mathsf G(x,\mathsf{Q}):= \mathsf{F}(\mathsf{Q})- \underbrace{\int_{x_0}^x S(s, \mathsf{Q}(s,t)) \dd s}_{=: \mathsf K(x, \mathsf{Q})} .
\end{equation}
At the discrete level, it means that there is only one differential operator discretizing $\partial_x$, 
while discrete equilibria are fully determined by the approximation of the source integral $\mathsf K$.
As shown in \cite{Abgrall2022}, the simplest of such methods is the usual source term upwinding which dates back to the early
ideas of Roe \cite{Roe87},   and early works on  the so-called C-property \cite{bv94}.
In certain cases, they can be explicitly and fully described,  independently on the   discretization of  $\partial_x$, using ODE integration methods
 \cite{MANTRI2023112673,KAZOLEA2025106646}.

The above framework is purely one dimensional. The first genuinely multi-dimensional and arbitrarily high-order generalization is   discussed in   \cite{brt25} in the context of nodal, 
continuous, stabilized  Finite Element methods.  A  (low order)  Finite Volume version of the same  is proposed  in   \cite{bcrt25}. 
The present work  incorporates source terms in the multi-dimensional setting and aims at designing arbitrarily high-order Finite Element formulations that preserve the corresponding multi-dimensional discrete equilibria. To this end, the main idea is to rewrite the last equation of \eqref{eq:acoustic-source} as
 $$
 \partial_t p +\partial_{xy}\left( \int_{y_0}^y u(x,s) \dd s  \right)  
 +\partial_{xy}\left( \int_{x_0}^x v(s,y) \dd s \right) - 
\partial_{xy}\left(  \int_{x_0}^x  \int_{y_0}^y  S_p(s_1,s_2) \dd s_2\dd s_1  \right )  =0.
 $$
Setting 
\begin{equation}\label{eq:definition_intu_intv}
	\begin{cases}
		\mathcal U(x,y):= \int_{y_0}^y u(x,s) \dd s,\\[5pt]
		\mathcal V(x,y):= \int_{x_0}^x v(s,y) \dd s,\\[5pt]
		\mathcal K_p(x,y):= \int_{x_0}^x  \int_{y_0}^y  S_p(s_1,s_2) \dd s_2\dd s_1 
	\end{cases} 
\end{equation}
and
\begin{equation}\label{eq:definition_Gp}
	G_p := \mathcal U+\mathcal V -\mathcal K_p,
\end{equation}
we obtain
\begin{align}
 \partial_t p + \partial_{xy}G_p =0. \label{eq:gf2d}
\end{align}
\eqref{eq:definition_Gp} is a genuinely multi-dimensional GF accounting also for source terms.
As we will show below, although  equivalent to  \eqref{eq:acoustic-source}, 
formulation \eqref{eq:gf2d} is  significantly more effective for obtaining stationarity preserving methods.

The pressure gradient balance \eqref{eq:steady-constraint-grad} is treated here using one-dimensional  global integration independently on the $x$ and $y$ directions, i.e.,
\begin{equation}\label{eq:definition_Gv}
 \boldsymbol{\mathcal K}_{\vec v}=(\mathcal K_u,\mathcal K_v)=
 \left(
  \int_{x_0}^x S_u(s,y) \dd s,\,
  \int_{y_0}^y S_v(x,s) \dd s
 \right).
 \end{equation}
Altogether, one obtains the following multi-dimensional GF model 
\begin{equation}\label{eq:md-GF-continuous}
	\begin{cases}
\partial_t u  + \del_x (p - \mathcal K_u) =0,\\
\partial_t v  + \del_y (p - \mathcal K_v) =0,\\
\partial_tp +  \partial_{xy} G_p =0,
	\end{cases}
\end{equation}
which is equivalent to \eqref{eq:acoustic-source}, and which will serve as the basis of discretization as shown below.


Stationary states of \eqref{eq:md-GF-continuous} are characterized by
\begin{equation}\label{eq:div_free_GFsteady}
\begin{aligned}
		G_p= & \sigma^x_p(x) + \sigma^y_p(y),\\
			p-\mathcal K_u = & \sigma^y_u(y),\\
			p-\mathcal K_v=&   \sigma^x_v(x),
\end{aligned}
\end{equation}
for arbitrary functions $ \sigma^x_p,  \sigma^y_p,  \sigma^y_u,  \sigma^x_v\colon\mathbb R\to \mathbb R$ that depend on the boundary conditions. 
 The truly multi-dimensional condition $\nabla \cdot \vec v = S_p$ is now replaced by imposing that $G_p=\mathcal U+ \mathcal V- \mathcal K_p$ is a sum of a function in $x$ and a function in $y$, and similarly, \eqref{eq:steady-constraint-grad} is replaced by imposing that $p-K_u$ and $p-K_v$ vary only in $y$ and $x$, respectively. One could say that we have rewritten the model in such a way that dimensional splitting is no longer a problem. Of course, in the end, a dimensionally split method for the GF formulation \eqref{eq:md-GF-continuous} will yield a truly multi-dimensional method for the original formulation \eqref{eq:acoustic-source}. 

Finally, we would like to remark that firstly, the above reformulation is used to modify the quadrature appearing in the discretization.
This is why this method is better referred to as a  Global Flux \emph{quadrature} approach.   Moreover, the wording global 
refers to the coupling between all the different terms in the PDE, and must not be confused with
the presence of global, integrated quantities. Indeed, at discrete level only differences of them will appear, the lower integration bound can be chosen differently in each cell, and the resulting method is local,  with a usual size of the stencil.

%
%
%
%
%
%
\subsection{Main contribution and outline of the paper}

In this paper, we propose to start from \eqref{eq:md-GF-continuous} to construct, analyze, and implement new numerical methods with enhanced stationarity preserving properties
for multi-dimensional hyperbolic balance laws. 
In Section~\ref{sec:acoustics}, we 
discuss
the types of source terms that we will include, 
and give some insights into the properties of the exact steady states, with a focus on the boundary conditions.
Section~\ref{sec:fem} introduces the  discrete framework of continuous tensor nodal Finite Elements, the standard
Galerkin variational forms, and stabilization based on the SU and OSS methods.
In Section~\ref{sec:GF_source}, we introduce the novel multi-dimensional GF framework into the Galerkin  discrete variational statement.
We prove the structure preserving properties of the new discrete equations, and give a strong characterization of the discrete equilibria,  including an explicit definition and 
estimates of their nodal consistency properties, and  we analyze the interplay between them and the boundary conditions. We then discuss how to modify the stabilization terms such that they are compatible with the discrete equilibria in Section~\ref{sec:GF_stab}.
This extends the work of \cite{brt25} by accounting for forcing terms of different forms (in particular, not only Coriolis-type).
In Section~\ref{sec:DeC}, we introduce the Deferred Correction time discretization that allows to obtain arbitrarily high-order schemes while maintaining the equilibria as steady states.
%
Section~\ref{sec:numerics} shows numerical benchmarks up to order 6 confirming the dramatic enhancements brought by our approach.
Section~\ref{sec:conclusions} ends with open issues and future research directions.



\section{Examples of steady states
}\label{sec:acoustics}
In this section we review the types of steady equilibria we wish to approximate numerically. 
Besides the satisfaction of the  partial differential equation, these solutions may require
the specification of appropriate boundary conditions when they do not have a compact support.

System   \eqref{eq:acoustic-source}
involves both a vector source term $\vec S_{\vec v}$, and a scalar source term $S_p$. The latter is essentially a pressure/mass  pulsation,
while the vectorial source can be seen as a force/acceleration.   The former will be assumed to be of the following general form:
\begin{equation}\label{eq:definition_Sv}
\vec S_{\vec v}=  c(x,y)\vec v^{\perp} - f(x,y) \vec v  + \boldsymbol{\tau}(x,y)
\end{equation}
with $c,f : \mathbb R^2 \to \mathbb R^+$, $\boldsymbol{\tau}:\mathbb R^2 \to \mathbb R^2$, having set  $\vec v^{\perp}= (v, -u)$. The first term models a Coriolis force, the second friction, the last external forcing.

The stationary states are then governed by
\begin{align}
	\nabla p &= \vec S_{\vec v} = c(x,y)\vec v^{\perp} - f(x,y) \vec v  + \boldsymbol{\tau}(x,y),\\
	\nabla \cdot \vec  v &= S_p, 
\end{align}
from which it follows that the curl of $\vec S_{\vec v}$ has to vanish at steady state:
\begin{align}\label{eq:steady-compatibility-curl}
0  &= \nabla^{\perp}\cdot\vec S_{\vec v} =   c(x,y) S_p -f(x,y) \nabla^{\perp}\cdot\vec v   +\nabla^{\perp}\cdot\boldsymbol{\tau}(x,y) 
+ \vec v\cdot\big(\nabla  c(x,y)   - \nabla^{\perp}  f(x,y)\big),
\end{align}
with $\nabla^{\perp}:=(\partial_y,-\partial_x)$; observe that $\nabla^{\perp}\cdot \vec v^{\perp} = \nabla \cdot \vec v$.

Another relation 
at steady state is obtained by combining the steady states of the first two equations in \eqref{eq:acoustic-source},
and integrating them in $x$ and $y$:
\begin{equation}\label{eq:steady-compatibility-Sv}
p(x,y) =\int_{x_0}^x S_u\dd x + p(x_0,y) = \int_{y_0}^y S_v\dd y + p(x,y_0),
\end{equation}
where $p(x,y)$ denotes the pressure at equilibrium. 
This can be recast as 
\begin{equation}\label{eq:steady-compatibility-p}
p(x,y) -  \{ \lambda  p(x_0,y) + (1 - \lambda) p(x,y_0)  \}  = \lambda   \int_{x_0}^x S_u\dd x   + (1-\lambda)  \int_{y_0}^y S_v\dd y \;,\quad \forall \lambda \in \mathbb{R}.
\end{equation}
The above relations highlight a coupling between  the different variables via the boundary conditions. More generally, using the structure of the equations,
and the general form of the steady solutions, one can derive compatibility relations between boundary pressure, and boundary velocities. This allows to set 
conditions on either of the variables independently on what data is given on the boundaries. 
In the following, we study in more detail three types of moving equilibria that can be obtained with different source terms. 
Compatibility relations for the pressure/velocity boundary values at steady state are provided for each of them.  

%

\subsection{Steady states with Coriolis term only}
This is a classical case studied also in recent works using other approaches \cite{audusse2025energy,audusse2018analysis,bouchut2004frontal}.
Introducing only the Coriolis source term (i.e. $f=0$, $\vec \tau = 0$, $S_p=0$), the equilibria can be described by
\begin{equation}\label{eq:coriolis_steady}
	\begin{cases}
		\partial_x p - c v=0,\\
		\partial_y p + c u=0,\\
		\partial_x u + \partial_y v =0.
	\end{cases}
\end{equation}
These include an interaction between the pressure and the velocities. In particular, if the boundary conditions are
prescribed on the pressure $p(x,y_0)$ and $p(x_0,y)$, then we can see from \eqref{eq:steady-compatibility-Sv}
that 
\begin{equation}\label{eq:coriolis_GFsteady}
	\begin{cases}
		p(x,y) - \int_{x_0}^x c(s,y) v(s,y) \dd s 
		= p(x_0,y),\\
		p(x,y) + \int_{y_0}^y c(x,s) u(x,s) \dd s 
		= p(x,y_0),\\
		\mathcal U+\mathcal V= \sigma^x_p(x) + \sigma^y_p(y).
	\end{cases} 
\end{equation}
where $\sigma^x_p$ and $\sigma^y_p$ are arbitrary functions of $x$ and $y$, respectively.
The first two equations imply 
\begin{equation}\label{eq:compatibility_coriolis}
	\int_{x_0}^x c(s,y) v(s,y) \dd s+\int_{y_0}^y c(x,s) u(x,s) \dd s =  p(x,y_0)  - p(x_0,y).
\end{equation}
If the  Coriolis coefficient $c$ is constant, then     \eqref{eq:compatibility_coriolis}  implies 
\begin{equation}
  \mathcal U + \mathcal V  = \dfrac{ p(x,y_0)  - p(x_0,y)}{c} \;\;\Rightarrow\;\;
   \sigma^x_p(x) = \dfrac{ p(x,y_0)}{c}\;,\;\; \sigma^y_p(y) = - \dfrac{ p(x_0, y)}{c}.
\end{equation}

In case the boundary conditions are given on the velocities, we now have
\begin{equation}\label{eq:coriolis_GFsteady}
	\begin{cases}
		p(x,y) - \int_{x_0}^x c(s,y) v(s,y) \dd s 
		= \sigma^y_u(y) = p(x_0,y) ,\\
		p(x,y) + \int_{y_0}^y c(x,s) u(x,s) \dd s 
		= \sigma^x_v(x) = p(x,y_0),
	\end{cases} 
\end{equation}
where $\sigma^x_v(x)= p(x,y_0)$ and $\sigma^y_u(y)=p(x_0,y)$ are to be determined.
To combine the two relations we evaluate the first for $y=y_0$,  and the  second for $x=x_0$. Setting $p_0=p(x_0,y_0)$ we can write the  compatibility relations:
\begin{equation} 
 \sigma^y_u(y)  = p(x_0,y) = p_0 - c \int_{y_0}^y u(x_0,s)\dd s\;,\quad
  \sigma^x_v(x) = p(x,y_0)=p_0 + c \int_{x_0}^x v(s,y_0)\dd s.
\end{equation} 



For constant $c$, inspired by the tests proposed by \cite{ricchiuto2021analytical,brt25}, we consider steady vortex solutions with a solenoidal velocity field
\begin{align} \label{eq:coriolis_equilibrium}
 \vec v(x,y) &= - h(\rho(x,y)) \vecc{x-x_0}{y - y_0}^\perp
\end{align}
with $\rho(x,y)=\|\vec x-\vec x_0\|$ and $h:\mathbb R^+ \to \mathbb R$. 
The pressure is obtained by solving $\nabla p = c \,\vec v^\perp = -c \, h(\rho(x,y)) (\vec x-\vec x_0)$; using that $\rho \nabla \rho = \vec x-\vec x_0$ one finds
\begin{align}\label{eq:coriolis_equilibrium_pressure}
 p(x,y) &= P_0- c\, g(\rho(x,y)) 
\end{align}
with $g'(\rho) = \rho h(\rho)$ and $P_0 = \text{const}$.



\subsection{Steady states with mass source only}\label{sec:mass_source_theory}
In this case, we consider only a  scalar source term $S_p \neq 0$ (that could, depending on the context, be called mass or pressure source). 
It leads to equilibria with a non-vanishing divergence:
\begin{equation}
	\begin{cases}
	\div \vec v = S_p,\\
	p \equiv C\in \mathbb R,
	\end{cases}
\end{equation}
which,  using \eqref{eq:div_free_GFsteady}, becomes
\begin{equation}\label{eq:mass_source_GFsteady}
\begin{cases}
	 \mathcal{U}+\mathcal{V}-\mathcal{K}_p=\sigma^x(x)+\sigma^y(y),\\
	p\equiv C\in \mathbb R.
\end{cases}	
\end{equation}
Note  the pressure is constant, due to the absence of sources in the velocity equations. We  thus do not write compatibility 
relations for the boundary conditions in this case.

%
An example of the previous equilibrium  can be obtained setting
\begin{equation}\label{eq:mass_steady}
	\begin{cases}
		p(x,y)\equiv p_0,\\
		\vec v(x,y):= -a\, h(\rho(x,y))  \vecc{x-x_0}{y-y_0}^\perp   + b \,\nabla g(x,y),\\
		S_p(x,y) :=  b \, \Delta g(x,y),\\
		\vec S_{\vec v} (x,y) := 0, 
	\end{cases}
\end{equation}
where $\rho(x,y)=\|\vec x-\vec x_0\|$, $a$ and $b$ are constants, and   $g: \Omega \to \mathbb R$ and $h: \mathbb{R}^+ \to \mathbb{R}$ are  sufficiently smooth functions. 


We can easily modify the above problem to obtain a moving solution. This is useful for validation purposes.
For example, given a constant vector $\vec a = (a_x,a_y)\in\mathbb{R}^2$, we can show that a translating solution is given by  
\begin{equation}\label{eq:mass_moving}
\begin{cases}
		p(x,y)\equiv p_0 +  b\, \vec a\cdot\nabla  g(x-a_x t,y-a_y t),\\
		\vec v(x,y):=  a\, h(\rho(\vec x - \vec x_0(t))) (\vec x - \vec x_0(t))^\perp +  b\, \nabla g(x-a_x t,y-a_y t),\\
		S_p(x,y) :=  b\, \Delta g(x-a_x t,y-a_y t) - b\, \vec a \cdot \nabla (\vec a\cdot\nabla g) ,\\
		\vec S_{\vec v} (x,y) := 0, 
	\end{cases}
\end{equation}
with $\vec x_0(t) = (x_0-a_xt,y_0-a_yt)$.

\subsection{Steady state with all velocity sources: the Stommel Gyre}\label{sec:stommel_gyre}
We consider the general case in which all momentum sources are present, such that the steady state is governed by
\begin{align}
\nabla\cdot \vec v&=0,\\
 \nabla p &= c(x,y)\vec v^{\perp} - f(x,y) \vec v  + \boldsymbol{\tau}(x,y).
\end{align}
Integrating both components one obtains
$$
\begin{aligned}
p =  & \int_{x_0}^x cv \,\dd s- \int_{x_0}^x fu\,\dd s + \int_{x_0}^x \tau_u \,\dd s +  \sigma^y_u(y),\\
p =  & -\int_{y_0}^y cu \,\dd s- \int_{y_0}^y fv\,\dd s + \int_{y_0}^y \tau_v\,\dd s + \sigma^y_v(x),
\end{aligned}
$$ 
where    $\sigma^x_v=p(x,y_0)$, and  $\sigma^y_u=p(x_0,y)$ are to be determined. To this end we evaluate the first above
for $y=y_0$, and the second for $x=x_0$ to obtain the compatibility relations (as before $p_0=p(x_0,y_0)$):
 \begin{equation} \label{eq:pressure_bc_condition}\begin{aligned}
p(x_0,y) &= p_0 -\int_{y_0}^y (cu)(x_0,s) \,\dd s -\int_{y_0}^y fv(x_0,s)\,\dd s+\int_{y_0}^y \tau_v(x_0,s)\,\dd s, \\
p(x,y_0)  & =p_0 +\int_{x_0}^x (cv)(s,y_0)\,\dd s - \int_{x_0}^x fu(s,y_0)\,\dd s+ \int_{x_0}^x \tau_u(s,y_0) \,\dd s.
\end{aligned}
\end{equation}
Note that, unless $c$ is constant, we cannot use the mass equation to couple the two expressions.

A classical example studied in meteorology is based on geostrophic  shallow water equations, under a linearization assumption around the reference depth $h_0=1$ and with a re-scaling of gravity $g=1$,  the so-called Stommel Gyre model \cite{stommel1948westward,comblen2010practical}. 
The model is the acoustic system \eqref{eq:acoustic-source} with 
\begin{itemize}
\item a Coriolis source term with the coefficient $c(x,y )  := c_1 y + c_0$ (linear in the latitude $y$),
\item a friction source term, where we use a constant $f \in \mathbb R^+$,
\item a source in the velocity equations (wind forcing) in the form $\displaystyle \begin{cases}
		\tau_u = -F \cos(\pi y/b),\\
		\tau_v = 0,
	\end{cases}
$
with $F \in\mathbb R$ constant,
\item no pressure source: $S_p=0$.
\end{itemize}
A well-known particular steady state on the rectangular domain $\Omega = [0,\lambda]\times[0,b]$ is (see  \cite{stommel1948westward} for details):
\begin{align}
& \psi(x,y)= \gamma (b/\pi)^2 \sin(\pi y/b) (k e^{Ax} + w e^{Bx}-1)\\
&	\begin{cases}
		u(x,y) = \partial_y \psi(x,y) = \gamma (b/\pi) \cos(\pi y /b) (ke^{Ax}+we^{Bx}-1),\\
		v(x,y) = -\partial_x \psi(x,y) = -\gamma (b/\pi)^2 \sin(\pi y/b) (kAe^{Ax}+wBe^{Bx}),
	\end{cases} \qquad \text{with } \begin{cases}
	\alpha = \frac{c_0}{f},\\
	\gamma = \frac{F \pi }{bf},
	\end{cases}
\end{align}
with the coefficients $k, w$ given by 
\begin{equation}
	k = \frac{1-e^{B\lambda}}{e^{A\lambda}-e^{B\lambda}},\qquad w=1-k,
\end{equation}
and $A,B$ are the characteristic roots given by
\begin{equation}\label{eq:gyre4}
	\begin{split}
		A=& -\dfrac{\alpha}{2} +\sqrt{\dfrac{\alpha^2}{4} + (\pi/b)^2}\;,  \quad A^2 = (\pi/b)^2 -\alpha A,\\
		B=&-\dfrac{\alpha}{2} -\sqrt{\dfrac{\alpha^2}{4} + (\pi/b)^2}\;,   \quad B^2 = (\pi/b)^2 -\alpha B.
	\end{split}
\end{equation}
The steady pressure field   can be expressed as 
\begin{equation}\label{eq:gyre5}
	\begin{split}
		p = -&F \left( \dfrac{k}{A} e^{Ax} +  \dfrac{w}{B} e^{Bx}  \right) - F(b/\pi)^2 \left(  k A e^{Ax} + w B e^{Bx}  \right)( \cos(\pi\,y/b) -1 ) \\
		-& \left\{c(x,y)\gamma(b/\pi)^2\sin(\pi\,y/b) +\gamma c_0 (b/\pi)^3( \cos(\pi\,y/b) -1 ) \right\}\left(ke^{Ax} + w e^{Bx}-1\right).
	\end{split}
\end{equation}
This is a complex multi-dimensional case that couples all effects, and does not exhibit the same simple structure in terms of boundary conditions.
The variability of the Coriolis coefficient makes it easier  in practice  to assume the velocities to be given on the boundaries, and look for compatible integration
constants for the pressure.   


\section{Discrete setting: Finite Elements  on Cartesian meshes}\label{sec:fem}

The discretization of hyperbolic equations with continuous Finite Element methods (FEM) requires some stabilization techniques. We focus here on the Streamline Upwind (SU) stabilization \cite{hughes86,michel2021spectral,michel2022spectral}, and on the orthogonal sub-scale stabilization (OSS) \cite{CODINA1997373,CODINA20001579,michel2022spectral}. For additional details on the spatial discretization of acoustic equations with stabilized continuous FEM we refer to \cite{brt25}.

\subsection{Tensor-product Finite Element spaces}

Let us consider a rectangular domain $\Omega := \Omega^x \times \Omega^y=[x_0,x_e]\times [y_0,y_e] \subset \mathbb R^2$ and a Cartesian mesh with cells $E_{ij}:=E^x_i \times E^y_j := [x_{i},x_{i+1}]\times [y_j,y_{j+1}]$ with $i=0,\dots, N_x-1$, $j=0,\dots,N_y-1$, $|E^x_i|=\Delta x$, $|E_j^y|=\Delta y$  and $h:=\min(\dx,\dy)$. We define the numerical domains as $\Omega^x_{\dx}:=\cup_{i=0}^{N_x-1} [x_i,x_{i+1}]$, $\Omega^y_{\dy}:=\cup_{j=0}^{N_y-1} [y_j,y_{j+1}]$, and $\Omega_{h}:=\cup_{i=0}^{N_x-1}\cup_{j=0}^{N_y-1} [x_i,x_{i+1}]\times [y_j,y_{j+1}]$.
To be fully precise,  since $\Omega$ is rectangular, $\Omega = \Omega_h$, so we will simply use $\Omega$ below.

We define continuous nodal FEM spaces of polynomial degree $K$ by introducing $K+1$ points in each one-dimensional cell $x_{i,p}\in E^x_i$,  for $p=0,\dots,K,$ and $y_{j,\ell} \in E^y_j$, for $\ell=0,\dots,K$. These points are used to define the Lagrangian basis functions. 
We will use Gauss--Lobatto points in each cell and $x_{i,0}=x_{i-1,K}$ for all $i=1,N_x-1$ and similarly for $y$, to obtain continuity all over the domain.

The relevant spaces over one/two-dimensional domains are 
\begin{subequations}
	\begin{align}
		V^K_{\Delta x}(\Omega^x_{\dx}):=&\left\lbrace q \in \mathcal{C}^0(\Omega_{\dx}^x): q|_E\in \mathbb P^K(E), \forall E \in \Omega^x_{\dx}\right\rbrace,\\
		V^K_{\dy}(\Omega^y_{\dx}):=&\left\lbrace q \in \mathcal{C}^0(\Omega_{\dy}^y): q|_E\in \mathbb P^K(E), \forall E \in \Omega^y_{\dy}\right\rbrace,\\
		V^K_h(\Omega):=&\left\lbrace q \in \mathcal{C}^0(\Omega): q|_E\in \mathbb Q^K(E), \forall E \in \Omega\right\rbrace,
	\end{align}
\end{subequations}
where $ \mathbb P^K$ is the space of univariate polynomials of degree up to $K$, and $\mathbb Q^K$ is the tensor product space of such polynomials in $x$ and $y$ directions.
For simplicity we will occasionally write $V_{\dx}\equiv V^K_{\Delta x}(\Omega^x_{\dx})$, $V_{\dy}\equiv V^K_{\dy}(\Omega^y_{\dx})$, $V_h\equiv V^K_h(\Omega)$.
In each one-dimensional cell $E_i^x$, we consider the Lagrangian basis functions $\varphi^x_{i,p} \in V_{\dx}$ such that $\varphi^x_{i,p}(x_{i,\ell}) = \delta_{p,\ell}$, the Kronecker delta, for every $p,\ell =0,\dots, K$ and $i=0,\dots,N_x-1$. Moreover, since any $\varphi\in V_{\dx}$ must be continuous, 
$$\text{supp}(\varphi^x_{i,p})= E^x_i\text{ for }p=1,\dots,K-1,\quad \text{supp}(\varphi^x_{i,0})=\text{supp}(\varphi^x_{i-1,K}) = E^x_{i-1} \cup E^x_i,$$ 
recalling that $\varphi^x_{i-1,K}=\varphi^x_{i,0}$. The same holds for the basis functions of the $y$ space $\varphi^y_{j,\ell}$.
Finally, the approximation space is $V_{\dx}^K(\Omega_{\dx}^x)=\text{span}\lbrace \varphi^x_{i,p} \rbrace_{i,p=0}^{N_x-1,K}$. We use the same spaces to discretize vector components as those we use for scalars.

This leads to the definition of $V_h$ as a tensor product of the function spaces, with the basis $\varphi_{i,p;j,\ell}(x,y):=\varphi^x_{i,p}(x) \varphi^y_{j,\ell}(y)$ for $i=0,\dots,N_x-1$, $j=0,\dots,N_y-1$, $p,\ell = 0,\dots,K$.
Finally, we will describe any function  $q_h \in V_{\dx}$ having a 
 uni-directional dependence on $x$  as  


\begin{align}
	q_h(x) = \sum_{i=0}^{N_x-1}\sum_{p=0}^K \mathrm q_{i,p}\,\varphi_{i,p}|_{E^x_i}(x) \label{eq:qhdef1d}.
\end{align}
Note that the summation runs over $K+1$ nodes in each element $p\in \{0,\dots,K\}$, corresponding to a degree $K$ interpolation, but $\mathrm q_{i,0}=q_{i-1,K}$ for all $i=1,\dots,N_x$.
Similarly,   a  function $q_h \in V_h$ with dependence on both spatial variables is expanded as
\begin{align}
	q_h(x,y) =\sum_{i=0;j=0}^{N_x-1;N_y-1}\sum_{p=0;\ell=0}^{K;K} \mathrm q_{i,p;j,\ell}\, \varphi^x_{i,p}(x)|_{E^x_i}\,\varphi^y_{j,\ell}(y)|_{E^y_j}, \label{eq:qhdef2d}
\end{align}
with similar constraints on the degrees of freedom $\mathrm q_{i,p;k,\ell}$.
For periodic or homogeneous Dirichlet boundary conditions this can be written as 
\begin{equation}\label{eq:2DFEM_functions}
	q_h(x,y) =\sum_{i=0;j=0}^{N_x-1;N_y-1}\sum_{p=1;\ell=1}^{K;K} \mathrm q_{i,p;j,\ell}\, \varphi^x_{i,p}(x)\,\varphi^y_{j,\ell}(y)=\sum_{i=0;j=0}^{N_x-1;N_y-1}\sum_{p=1;\ell=1}^{K;K} \mathrm q_{i,p;j,\ell}\, \varphi_{i,p;j,\ell}(x,y).
\end{equation}
 We will use the roman font $\mathrm q$ for the degrees of freedom that refer to a function $q_h \in V_h$ in a discrete Finite Element space. See Figure~\ref{fig:FEM_DOFs} for a graphical representation of degrees of freedom (DOFs) $\mathrm q_{i,p;j,\ell}$ in a cell $E_{ij}$ for $K=4$.
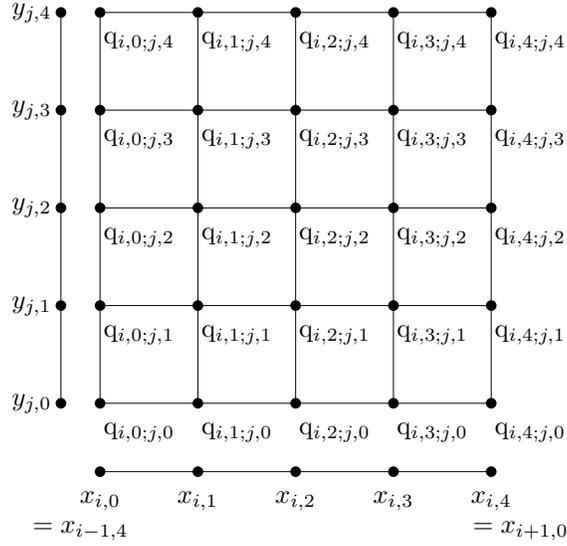
\begin{figure}
	\centering
	\adjustbox{max width=0.49\textwidth}{
		\begin{tikzpicture}[scale=1.3]
			\draw[step=1cm,black,very thin] (0,0) grid (4,4);
			\foreach \x in {0,...,4}
			{
				\foreach \y in {0,...,4}
				{
					\draw (\x+0.4,\y-0.3) node{$\mathrm q_{i,\x;j,\y}$};
					\node[circle, fill=black, inner sep=0.5mm] at (\x,\y){};
				}
			}
			\draw[step=1cm,black,very thin] (0,-0.7) -- (4,-0.7);
			\foreach \x in {0,...,4}
			{
				\draw (\x,-1) node{$x_{i,\x}$};
				\node[circle, fill=black, inner sep=0.5mm] at (\x,-0.7){};
			}
			\draw (4.3,-1.3) node{$=x_{i+1,0}$};
			\draw (-0.2,-1.3) node{$=x_{i-1,4}$};
			\draw[step=1cm,black,very thin] (-0.4,0) -- (-0.4,4);
			\foreach \y in {0,...,4}
			{
				\draw (-0.7,\y) node{$y_{j,\y}$};
				\node[circle, fill=black, inner sep=0.5mm] at (-0.4,\y){};
			}
		\end{tikzpicture}
	}
	\caption{Notation of the degrees of freedom for a function $q$ in element $E_{ij}$ for $\mathbb Q^4$ elements}
\label{fig:FEM_DOFs}
\end{figure}

\subsection{Weak formulation}\label{sec:classical_weak_formulation}

The unstabilized (central) continuous Galerkin weak form of the system \eqref{eq:acoustic-source}  reads
\begin{equation}\label{eq:centralGalerkin_bilinear} 
	\begin{cases}
		\int_\Omega \varphi \big(\partial_t u_h + \partial_x p_h -(S_{u})_ h  \big)\,\dd x \dd y=0, \, &\forall \varphi \in V_h^0,\\
		\int_\Omega \varphi \big(\partial_t v_h + \partial_y p_h -(S_{v})_ h  \big)\,\dd x \dd y=0, \, &\forall \varphi \in V_h^0,\\
		\int_\Omega \varphi \big(\partial_t p_h + \partial_x u_h+ \partial_y v_h -(S_{p})_h  \big)\,\dd x \dd y=0, \, &\forall \varphi \in V_h^0,
	\end{cases}
\end{equation}
subject to suitable boundary conditions (BC), with $V_h^0:=\lbrace \varphi \in V_h: \varphi(x,y) =0 \text{ for }(x,y)\in\partial \Omega\rbrace$.

Let us denote with Greek letters the tuples used for defining the one-dimensional 
basis functions, e.g. $\alpha = (i,p)$, and with $\mathcal{Q}^x:=\lbrace \alpha=(i,p): 0\leq i <N_x,\, 1\leq p \leq K  \rbrace$ the set of all unique indices. Using the discretization of the variables as in \eqref{eq:2DFEM_functions}, we can rewrite the central Galerkin discretization with one-dimensional matrices defined for all $\alpha, \beta \in \mathcal{Q}^x$ (similarly for $\mathcal{Q}^y$) as (see also \cite{brt25} for a lengthier description) 
\begin{equation}\label{eq:operators}
\begin{aligned}
	({M}_x)_{\alpha, \beta} := \int_{\Omega_x} \varphi_{\alpha}(x) \, \varphi_\beta (x) \dd x, \qquad&({D}_x)_{\alpha, \beta} := \int_{\Omega_x} \varphi_{\alpha}(x) \,  \frac{\dd}{\dd x} \varphi_\beta (x) \dd x , \\
	({D}_x^x)_{\alpha, \beta} := \int_{\Omega_x} \frac{\dd}{\dd x}\varphi_{\alpha}(x) \, \frac{\dd}{\dd x}\varphi_\beta (x) \dd x, \qquad&({D}^x)_{\alpha, \beta} := \int_{\Omega_x} \frac{\dd}{\dd x}\varphi_{\alpha}(x) \,   \varphi_\beta (x) \dd x .
	\end{aligned}
\end{equation}
Using these operators, \eqref{eq:centralGalerkin_bilinear} becomes in a vectorial form  
\begin{equation}\label{eq:centralGalerkin_matrix}
	\begin{cases}
		{M}_x\otimes {M}_y \partial_t \mathrm{u} + {D}_x\otimes {M}_y \mathrm{p} -{M}_x\otimes {M}_y \mathrm{S}_u =0,\\
		{M}_x\otimes {M}_y \partial_t \mathrm{v} + {M}_x\otimes {D}_y \mathrm{p} -{M}_x\otimes {M}_y \mathrm{S}_v =0,\\
		{M}_x\otimes {M}_y \partial_t \mathrm{p} + {D}_x\otimes {M}_y \mathrm{u}+ {M}_x\otimes {D}_y \mathrm{v} -{M}_x\otimes {M}_y \mathrm{S}_p =0,
	\end{cases}
\end{equation}
where $\otimes$ denotes the Kronecker product between matrices and $\mathrm{u},\,\mathrm{v},\,\mathrm{p},\,\mathrm{S}_u,\,\mathrm{S}_v,\,\mathrm{S}_p$ are the corresponding coefficients of the functions in $V_h$. For more details, see \cite{brt25}. In practice, this might necessitate the evaluation of some source terms, e.g. $c(x,y)v_h(x,y)$ or $s^u_h(x,y)$ in the same Gauss--Lobatto quadrature points.

\subsection{Stabilization approaches}
For initial value problems, the Lax-Richtmyer stability of   central collocated approximations   as the Continuous Galerkin Finite Element methods   can be shown
under a  $\dx^2$  time step constraint (see e.g. \cite{FDM-Leveque}, chapter 10). Note that this stability criterion only guarantees the boundedness of the discrete solution,
and  is weaker than the usual Fourier (von Neumann) stability criterion requiring the norm of the complex amplification factor to be below 1.

For initial-boundary value problems this  can be improved via appropriate treatments of the boundary conditions \cite{abgrall2020analysis}. It is, however, generally accepted that central collocated methods require stabilization.
Differently from methods using discontinuous data,   we cannot introduce Riemann Problems and upwinding for stabilization. Thus, some stabilizing operator must be explicitly added to the variational  equations.
Below, we present two stabilization approaches
that allow to use explicit time integration with CFL conditions, where $\dt$ depends linearly on $\dx$.


\subsubsection{Streamline upwind  stabilization}
We discuss here the Streamline-Upwind  (SU) stabilization, leading to the well known SUPG (Streamline-Upwind Petrov-Galerkin) discretization  \cite{brooks82,hughes86}.
For our system  the  SU  stabilization term reads
\begin{equation}\label{eq:SUPG_stab}
	\mathcal{ST}\!_\text{SU}\left(\varphi,q_h=(u_h,v_h,p_h)\right) := \int_{\Omega} \alpha h (\partial_x \varphi \, J^x +  \partial_y \varphi \, J^y)(\partial_t q_h + \nabla \cdot \mathsf F(q_h) - \mathsf S(q_h)) \,\dd x\, \dd y,
\end{equation}
with $J^x$ and $J^y$ the Jacobian of the fluxes as in \eqref{eq:acoustics_matrix}. The energy stability associated to this approach is discussed in a more general setting in appendix~\ref{sec:app-stabSU}.
The stabilization terms added to each equation read
\begin{equation}\label{eq:SUPG_weak}
	\begin{cases}
		\int_{\Omega} \alpha h \, \partial_x \varphi \left( \partial_t p_h + \partial_x u_h +\partial_y v_h -(S_{p})_{h} \right)\dd x\dd y,\\
		\int_{\Omega} \alpha h \,\partial_y \varphi \left( \partial_t p_h + \partial_x u_h +\partial_y v_h -(S_{p})_{h}\right)\dd x\dd y,\\
		\int_{\Omega} \alpha h \,\left[ \partial_x \varphi \left( \partial_t u_h + \partial_x p_h -(S_{u})_{h} \right) +\partial_y \varphi \left( \partial_t v_h + \partial_y p_h -(S_{v})_{h} \right) \right]\dd x\dd y,
	\end{cases}
\end{equation}
or, in vectorial form,
\begin{equation}\label{eq:SUPG_matrix}
	\begin{cases}
		&\alpha h \left( {D}^x \otimes {M}_y \partial_t \mathrm{p}+ {D}^x_x \otimes {M}_y \mathrm{u} +{D}^x \otimes {D}_y  \mathrm{v} -{D}^x \otimes {M}_y \mathrm{S}_p   \right),\\
		&\alpha h \left( {M}_x \otimes {D}^y \partial_t \mathrm{p}+ {D}_x \otimes {D}^y \mathrm{u} +{M}_x \otimes {D}^y_y  \mathrm{v} -{M}_x \otimes {D}^y \mathrm{S}_p   \right),\\
		&\alpha h \left( {D}^x \otimes {M}_y \partial_t \mathrm{u}+ {D}^x_x \otimes {M}_y \mathrm{p}- {D}^x \otimes {M}_y \mathrm{S}_u\right) +\\
		&\quad \qquad \alpha h \left( {M}_x \otimes {D}^y \partial_t \mathrm{v} + {M}_x \otimes {D}^y_y \mathrm{p} -{M}_x \otimes {D}^y \mathrm{S}_v  \right).
	\end{cases}
\end{equation}

An explicit time discretization of this method will be discussed in section~\ref{sec:DeC}. The spectral stability of this approach using several explicit 
time-discretization methods is studied in \cite{michel2021spectral,michel2022spectral}. 

\begin{remark}[Streamline upwinding  and steady state preservation]
Being a residual based stabilization, and involving grad(div) type terms, the  SU stabilization   approach has in principle all the desired features to preserve moving equilibria.
Unfortunately,  as shown in \cite{brt25}, the SUPG  scheme is not stationarity preserving because
the   discrete Galerkin  variational form  and the streamline upwind stabilization operator
have different kernels, which do not include the same discrete approximation of the  physical steady state \cite{brt25}. 
This  ultimately leads to the dissipation of steady states, which can only be preserved within the accuracy of the method, thus on sufficiently fine meshes and for short simulation times.
\end{remark}

\subsubsection{Orthogonal Sub-scale stabilization (OSS)}

The Orthogonal Sub-scale Stabilization (OSS) adds a penalty term on the weak derivative of the residual. It was originally presented for Stokes equation \cite{CODINA1997373}, then for convection--diffusion--reaction problems \cite{CODINA20001579,OSSCodinaBadia} and for hyperbolic problems \cite{michel2021spectral,michel2022spectral}.
A multi-dimensional steady state compliant version of this method for homogeneous problems has been proposed already in \cite{brt25}.
For our system, it can be written as
\begin{equation}\label{eq:OSS_weak}
	\begin{aligned}
		&\int_{\Omega} \alpha h \partial_x \varphi \left( \nabla\cdot \vec v_h -(S_p)_h  -w^{ \nabla\cdot \vec v}    \right)\dd x\dd y,\\
		&\int_{\Omega} \alpha h \partial_y \varphi \left( \nabla\cdot \vec v_h -(S_p)_h  -w^{ \nabla\cdot \vec v}    \right)\dd x\dd y,\\
		&\int_{\Omega} \alpha h \partial_x \varphi \left( \partial_x p_h-(S_u)_h -w^{\partial_x p}    \right)\dd x\dd y +\int_{\Omega} \alpha h \partial_y \varphi \left( \partial_y p_h  -(S_v)_h  -w^{\partial_y p}\right)\dd x\dd y,
	\end{aligned}
\end{equation}
with the   $L^2$ projections $w^{ \nabla\cdot \vec v}$, $w^{\partial_x p}$ and $w^{\partial_y p}$ defined by
\begin{align}\label{eq:OSS_weak1}
	\int_\Omega  \varphi (w^{ \nabla\cdot \vec v}  - ( \nabla\cdot \vec v_h -(S_p)_h)) &=0,\\
		\int_\Omega  \varphi (w^{\partial_x p}  - ( \partial_x p_h -(S_u)_h)) &=0, \\
				\int_\Omega  \varphi (w^{\partial_y p}  - ( \partial_y p_h -(S_v)_h))& =0. 
\end{align}
By construction, the terms of the form  $\partial_x p_h  -(S_u)_h - w^{\partial_x p_h}$ are not zero and introduce $L^2$ stabilization.
The energy stability associated to this approach is discussed in a more general setting in appendix \ref{sec:app-stabOSS}.
See also \cite{CODINA20001579,OSSCodinaBadia,michel2021spectral,michel2022spectral,brt25} for more details.
The OSS operator can also be cast in compact matrix form.  Computations show that several terms cancel out. For example, for the first term we have  that 
$$
\mathrm w^{\nabla \cdot \vec v}:= (M_x^{-1}\otimes M_y^{-1}) \left[ D_x\otimes M_y \mathrm u + M_x\otimes D_y \mathrm v - M_x \otimes M_y \mathrm S_p \right]
$$
and
$$
\begin{aligned}
		&\alpha h \left( D^x_x \otimes  M_y \mathrm{u}   - D^x M_x^{-1}  D_x \otimes  M_y \mathrm{u} +  \cancel{D^x \otimes D_y \mathrm{v} -  D^x \otimes  D_y  \mathrm{v} }+  \cancel{D^x \otimes  M_y \mathrm{S}_p- D^x \otimes  M_y \mathrm{S}_p} \right)\\= & \alpha h  (D^x_x- D^x M_x^{-1}  D_x) \otimes  M_y \mathrm{u} .
		\end{aligned}
$$
Proceeding similarly for all terms, we obtain the compact expressions   
\begin{equation}\label{eq:OSS_stab_mat}
	\begin{cases}
		\alpha h  Z_x \otimes  M_y \mathrm{u},\\
		\alpha h  M_x \otimes  Z_y \mathrm{v},\\
		\alpha h \left(  Z_x \otimes  M_y \mathrm{p} +  M_x \otimes  Z_y \mathrm{p}\right),
	\end{cases}
\end{equation}
having defined the matrices
\begin{equation}\label{eq:def_Zmatrix}
\begin{aligned}
 Z_x:= & D^x_x- D^x M_x^{-1}  D_x, \qquad
 Z_y:= &D^y_y- D^y M_y^{-1}  D_y .
 \end{aligned}
\end{equation}
   All the differences between sources and their projection cancel out in the stabilization terms, as shown in the first equation. This will not be the case
in the multi-dimensional GF well-balanced formulation discussed later.  We refer to   \cite{michel2021spectral,michel2022spectral}  
 for a study of the  spectral stability of this approach with  several  explicit  time-discretization methods.  

\begin{remark}[OSS and steady state preservation]
The OSS is based on the approximation of  grad(div)  stabilization terms and it is a good candidate for stationarity preservation.
However, similarly to SUPG,  in \cite{brt25}  the authors  show that  in its  standard formulation the OSS method is not stationarity preserving. It thus
requires sufficiently fine meshes to appropriately resolve  multi-dimensional steady states. 
As for SUPG, this is related to the difference in the kernels of the discrete Galerkin  variational form  and of the  stabilization operator. We refer to  \cite{brt25}  for details.
\end{remark}

\section{Multidimensional Global Flux formulation}\label{sec:GF_source}

We now modify the discrete variational form \eqref{eq:centralGalerkin_bilinear} by incorporating the multi-dimensional GF approach, which
follows the ideas of Section~\ref{sec:GF_multiD_intro}.    As in \cite{brt25}, we will
\begin{enumerate}
\item define a modified variational form whose kernel contains an appropriate discrete notion of steady state; \label{it:discrsteadyitem}
\item characterize the discrete steady state by obtaining a pointwise error estimation w.r.t. the physical/exact steady solution;
\item define a modification of the stabilization  operators consistent with the same steady state. \label{it:discrstabitem}
\end{enumerate}
The nature of the discrete steady state introduced in \ref{it:discrsteadyitem} is such that it is possible (in \ref{it:discrstabitem}) to find a stabilization operator with the same steady state, something not possible otherwise.
As we will see, our approach allows to obtain methods with physically relevant discrete stationary states, which can be precisely characterized.
However, these steady states are not point-wise exact approximations.
For stationarity preserving schemes  point-wise evaluations of the stationary states of the PDE will in general develop
some small amplitude initial layer \emph{before} settling down onto the numerical steady state, which is a good approximation
of the exact one (if the boundary conditions are appropriately set). 
Stationarity preserving methods outperform in any case the standard ones which generally dissipate all but the simplest equilibria.

If one is interested in simulations of very small perturbations, we need to ensure the absence of any initial layer, i.e., that the discrete data are well-prepared with respect to the discrete equilibria of the method. 
In this case, we need to add to the above list:
\begin{enumerate}
\setcounter{enumi}{3}
\item an initialization procedure that projects data  on the discrete kernel. 
\end{enumerate}
We will discuss all these aspects in this section, except the modification of the stabilization terms that will be defined in Section~\ref{sec:GF_stab}.

%

\subsection{Variational form  using Global Flux quadrature}

Our starting point is the formulation \eqref{eq:md-GF-continuous} of section \ref{sec:GF_multiD_intro}:
\begin{equation*}
	\begin{cases}
\partial_t u  + \del_x (p - \mathcal K_u) =0,\\
\partial_t v  + \del_y (p - \mathcal K_v) =0,\\
\partial_tp +  \partial_{xy} G_p =0.
	\end{cases}
\end{equation*}

As already remarked, in the first two equations, a one-dimensional GF formulation is applied, while in the last equation we present a genuinely multi-dimensional GF approach.
Similar results can be obtained by treating the gradient as the divergence of a diagonal tensor.

To achieve a discretization of \eqref{eq:md-GF-continuous},  we need to 
define discrete counterparts of the integrated variables   $\mathcal{U}$, $\mathcal{V}$, $\mathcal{K}_p$ and $\boldsymbol{\mathcal{K}}_{\vec v}$
introduced  in  \eqref{eq:definition_intu_intv}   and \eqref{eq:definition_Gv}.
Following \cite{MANTRI2023112673,brt25},  we will approximate these variables in the same polynomial space as the actual unknowns, i.e.,
with the notation of \eqref{eq:qhdef2d},

\begin{equation}\label{eq:GF-polynomials1}
 \begin{split}
\mathcal{U}_h = & \sum\limits_{i=0;j=0}^{N_x-1;N_y-1} \sum\limits_{p=0;\ell=0}^{K;K} \mathrm{U}_{i,p;j,\ell}\, \varphi_{i,p}^x(x)|_{E_i^x}\varphi_{j,\ell}^y(y)|_{E_j^y}\,,\\
\mathcal{V}_h = &\sum\limits_{i=0;j=0}^{N_x-1;N_y-1} \sum\limits_{p=0;\ell=0}^{K;K} \mathrm{V}_{i,p;j,\ell}\,\varphi_{i,p}^x(x)|_{E_i^x}\varphi_{j,\ell}^y(y)|_{E_j^y} \,, \\
(\boldsymbol{\mathcal{K}}_{\vec v})_h = &\sum\limits_{i=0;j=0}^{N_x-1;N_y-1} \sum\limits_{p=0;\ell=0}^{K;K} (\boldsymbol{\mathrm{K}}_{\vec v})_{i,p;j,\ell}\,\varphi_{i,p}^x(x)|_{E_i^x}\varphi_{j,\ell}^y(y)|_{E_j^y}\,,\\
( \mathcal{K}_{p})_h =&\sum\limits_{i=0;j=0}^{N_x-1;N_y-1} \sum\limits_{p=0;\ell=0}^{K;K} ( \mathrm{K}_{p})_{i,p;j,\ell}\,\varphi_{i,p}^x(x)|_{E_i^x}\varphi_{j,\ell}^y(y)|_{E_j^y} \,.
 \end{split}
\end{equation}
We need to define the coefficients $\mathrm{U}_{i,p;j,\ell}$, $\mathrm{V}_{i,p;j,\ell}$, $(\boldsymbol{\mathrm{K}}_{\vec v})_{i,p;j,\ell}$, and $( \mathrm{K}_{p})_{i,p;j,\ell}$.
To this end, in each cell $E_{ij}:=E^x_i \times E^y_j$ we set (we simplify the notation by avoiding the restriction to the elements which is  clear from the context)
\begin{equation}\label{eq:GF-polynomials2}
 \begin{split}
 \mathrm{U}_{i,p;j,\ell}  = & \mathrm{U}_{i,p;j,0}  + \int_{y_{j,0}}^{y_{j,\ell}} u_h(x_{i,p},y)\,\dd y= \mathrm{U}_{i,p;j,0}+ \sum\limits_{n=0}^{K} \mathrm u_{i,p;j,n} \int_{y_{j,0}}^{y_{j,\ell}} \varphi_{j,n}^y(y)\,\dd y,\quad \ell=1,\dots,K,\, \forall p, \\
  \mathrm{V}_{i,p;j,\ell}  = & \mathrm{V}_{i,0;j,\ell}  + \int_{x_{i,0}}^{x_{i,p}} v_h(x,y_{j,\ell})\,\dd x= \mathrm{V}_{i,0;j,\ell}+ \sum\limits_{m=0}^{K} \mathrm v_{i,m;j,\ell} \int_{x_{i,0}}^{x_{i,p}} \varphi_{i,m}^x(x)\,\dd x,\quad p=1,\dots,K,\, \forall \ell,\\
  ( \mathrm{K}_{p})_{i,p;j,\ell}& =  ( \mathrm{K}_{p})_{i,0;j,0} +  \int_{x_{i,0}}^{x_{i,p}}\int_{y_{j,0}}^{y_{j,\ell}}  (S_p)_h(x,y)\,dy\,\dd x \\
  &=  ( \mathrm{K}_{p})_{i,0;j,0} + \sum\limits_{m=0;n=0}^{K;K} (\mathrm S_p)_{i,m;j,n}
  \int_{x_{i,0}}^{x_{i,p}} \varphi_{i,m}^x(x)\,\dd x \;
  \int_{y_{j,0}}^{y_{j,\ell}} \varphi_{j,n}^y(y)\,\dd y
  \quad (p,\ell) \in \lbrace 0,\dots,p\rbrace ^2 \setminus (0,0),
  \end{split}
\end{equation}
for all $i = 0,\dots, N_x-1$ and $j=0,\dots,N_y-1$. The above relations define summations along grid lines. We are left to define the interface values, which can be found imposing continuity as
\begin{align}
	\mathrm U_{i,p;j,0}&:=U_{i,p;j-1,K} \qquad \text{for all } i=0,\dots,N_x-1,\,j=1,\dots,N_y-1,\,p=0,\dots,K,\\
	 \mathrm V_{i,0;j,\ell}&:=V_{i-1,K;j,\ell} \qquad \text{for all } i=1,\dots,N_x-1,\,j=0,\dots,N_y-1,\,\ell=0,\dots,K, \\
	 (\mathrm K_p)_{i,0;j,0}&:=(K_p)_{i-1,K;j-1,K} \qquad \text{for all } i=1,\dots,N_x-1,\,j=1,\dots,N_x-1.
\end{align}
The remaining boundary values can be set to 0 \cite{MANTRI2023112673}, as the integral is defined up to a constant. The value of these integrated quantities is not so relevant as the integral operators will always be paired with a derivative operator. Hence, only their difference with respect to the border of the cell will be relevant. This means that only \eqref{eq:GF-polynomials2} will be relevant for the actual method.
 
The definitions of the components of $(\boldsymbol{\mathrm{K}}_{\vec v})_{i,p;j,\ell}$ are analogous to those of $ \mathrm{V}_{i,p;j,\ell}$ for  $( \mathrm{K}_{u})_{i,p;j,\ell}$,
and of $ \mathrm{U}_{i,p;j,\ell}$ for  $( \mathrm{K}_{v})_{i,p;j,\ell}$.   
The above relations define summations along grid lines. 
The initial states
in these summations are obtained imposing continuity across elements. In other words we have   $\mathrm{U}_{i,p;j,0}=  \mathrm{U}_{i,p;j-1,K}$,  
$\mathrm{V}_{i,0;j,\ell}=\mathrm{V}_{i-1,K;j,\ell}$ and similarly for all the others. After assembly, all these quantities are defined up to arrays containing constant entries, which will not affect  the final discretization. For this reason, these global integration constants are simply taken equal to zero as in \cite{MANTRI2023112673}. 

%
%
All the above computations   can be re-written through two local linear operators that we denote by one-dimensional  matrices $I_x$ and $I_y$, defined as 
\begin{equation}\label{eq:integral_matrix}
	(I_x)_{i_1,p_1;i_2,p_2}:= \int_{x_{i_1,0}}^{x_{i_1,p_1}} \varphi^x_{i_2,p_2}(x) \dd x,\qquad (I_y)_{j_1,p_1;j_2,p_2}:= \int_{y_{j_1,0}}^{y_{j_1,\ell_1}} \varphi^y_{j_2,\ell_2}(y) \dd y.
\end{equation}
In this way, we can define the discrete GF variables as
\begin{equation}\label{eq:GF-vars}
	\mathrm{U}    = \id_x \otimes I_y \mathrm u  , \qquad 
	\mathrm{V}    = I_x \otimes \id_y \mathrm v,\qquad 
	{\mathrm{K}}_u  = I_x \otimes \id_y \mathrm S_u,\qquad
	{\mathrm{K}}_v  = \id_x \otimes I_y \mathrm S_v,\qquad
	{\mathrm{K}}_p  = I_x \otimes I_y \mathrm S_p,
\end{equation}
with $\id_x$ and $\id_y$ being the identity matrices.
As noted in \cite{MANTRI2023112673,brt25}, when using Gauss-Lobatto interpolation points the  local  operators $I_x$ and $I_y$ are by definition  the  Butcher tableaux
of the well known LobattoIIIA collocation method for ODE integration \cite{hairer}. This is important for the consistency analysis  below.

Discrete equations are now obtained using the Galerkin projection  \eqref{eq:md-GF-continuous}. The resulting (unstabilized central)  discrete variational equations read as follows 
\begin{subequations}\label{eq:centralGalerkinGF_mat}
\begin{equation}\label{eq:centralGalerkinGF_matrix}
	\begin{cases}
	{M}_x\otimes {M}_y\, \partial_t \mathrm{u} + {D}_x\otimes {M}_y \,\mathrm{p} -{D}_xI_x\otimes {M}_y \,\mathrm{S}_u =0,\\
	{M}_x\otimes {M}_y\, \partial_t \mathrm{v} + {M}_x\otimes {D}_y\, \mathrm{p} -{M}_x\otimes {D}_yI_y\, \mathrm{S}_v =0,\\
	{M}_x\otimes {M}_y \,\partial_t \mathrm{p} + {D}_x\otimes {D}_yI_y \,\mathrm{u}+ {D}_xI_x\otimes {D}_y\, \mathrm{v} -{D}_xI_x\otimes {D}_yI_y\, \mathrm{S}_p =0.
	\end{cases}
\end{equation}
Note that the quantity ${D}_x\otimes {D}_yI_y \,\mathrm{u}+ {D}_xI_x\otimes {D}_y\, \mathrm{v}$ in the last equation is exactly the GF $\mathrm{DIV}$ operator 
introduced in \cite{brt25}. Here, we additionally account for the source terms in a multi-dimensional fashion.
Furthermore, we can factor out the differential operators in all the evolution terms as
\begin{equation}\label{eq:centralGalerkinGF_matrix_collected}
	\begin{cases}
		\mathrm R_c^u(\mathrm{u},\mathrm{v},\mathrm{p}) := 
		{M}_x\otimes {M}_y\, \partial_t \mathrm{u} + {D}_x\otimes {M}_y \left(  \mathrm{p} -I_x\otimes \id_y \,\mathrm{S}_u \right) =0,\\
		\mathrm R_c^v(\mathrm{u},\mathrm{v},\mathrm{p}) := 
		{M}_x\otimes {M}_y\, \partial_t \mathrm{v} + {M}_x\otimes {D}_y \left(  \mathrm{p} -\id_x\otimes I_y\, \mathrm{S}_v \right) =0,\\
		\mathrm R_c^p(\mathrm{u},\mathrm{v},\mathrm{p}) := 
		{M}_x\otimes {M}_y \,\partial_t \mathrm{p} + {D}_x\otimes {D}_y \left( \id_x\otimes I_y \,\mathrm{u}+ I_x\otimes \id_y\, \mathrm{v} -I_x\otimes I_y\, \mathrm{S}_p \right)=0.
	\end{cases}
\end{equation}
\end{subequations}
This is the key to being able to later define a stabilization that does not destroy the stationary state.

\begin{remark}[Symmetry of the GF quadrature\label{rem:GFsymm}] Definitions \eqref{eq:GF-polynomials2}  involve integrating, and marching in the positive direction of the reference axes
in an element wise manner.  This may lead that the conclusion that the resulting method has a directional dependence.  This is in fact not true, and one easily checks
that the same method is obtained when \eqref{eq:GF-polynomials2} are reversed and integration is started from the end points. This has been shown in  detail in
Proposition 9 of \cite[section \S4.3, page  13]{brt25},  and is briefly recalled for completeness. Consider for example the reversed  variable
$$
 \tilde V_{i,p;j,\ell} :=  \tilde V_{i,K;j,\ell}  +  \int_{x_{i,K}}^{x_{i,p}} v(s,y_{j,\ell}) \, ds  .
$$
We can easily show that
$$
 \tilde V_{i,p;j,\ell}  =  V_{i,p;j,\ell}    +  \Delta^{V}_{i;j,\ell}  \;,\quad  \Delta^{V}_{i;j,\ell}:=   \tilde V_{i,K;j,\ell} - \tilde V_{i,0;j,\ell} -   \int^{x_{i,K}}_{x_{i,0}} v(s,y_{j,\ell}) \, ds
$$
with $V_{i,p;j,\ell}  $ defined in   \eqref{eq:GF-polynomials2}. The quantity  $ \Delta^{V}_{i;j\ell}$ is the same for all nodes, and as a consequence  we have  on any $E_{ij}$
$$
\partial_x  \tilde{\mathcal V}_h=\partial_x   \mathcal V \qquad  \Rightarrow \qquad   D_x \otimes   D_y \tilde V = D_x \otimes   D_y  V.
$$
Similar developments can be performed for all the variables involved in   \eqref{eq:GF-polynomials2}, and in the variational form \eqref{eq:centralGalerkinGF_matrix_collected}.
This shows that the GF quadrature  formulation is symmetric and independent of the constants dropped.
We refer to \cite{brt25} for more.
\end{remark}

\begin{example}[GF $\mathbb Q^1$ operators]\label{ex:P1-GF}
	The main difference with the classical Finite Element method discussed in section~\ref{sec:classical_weak_formulation} is
 the replacement (in the divergence and source integral)
   of the mass matrices by the operators $D_xI_x$ and similarly in $y$.  We can compare the resulting expressions (in $x$-direction) at lowest order of accuracy, where they can be written as finite differences (note that the integrals are computed with the same Gauss--Lobatto quadrature defining the basis functions, hence, the mass matrix is not exact):
	\begin{itemize}
		\item  continuous Finite Element method: \\$ M_x \simeq \left(0, 1 ,0 \right), \quad  D_x \simeq (-\frac12, 0 , \frac12), \quad  D^x \simeq (\frac12, 0 , -\frac12), \quad  D_x^x \simeq (-1, 2 , -1)$;
		\item GF Finite Element method: $I_x = \begin{pmatrix}
			0&0\\\frac12 & \frac12 
		\end{pmatrix}$, so e.g., 
		$$\mathcal{V}_h(x_{i+1},y_j) = \mathcal{V}_h(x_{i},y_j) + \frac12 \left(v_h(x_i,y_j)+v_h(x_{i+1},y_j)\right).$$ This results in
		$ D_x I_x = \left(\frac14, \frac12,\frac14\right) \text{ (new mass matrix)}, \quad   D_x^x I_x = \left(\frac12, 0,-\frac12\right) = D^x$. 
         As has been shown in \cite{brt25}, at lowest order the GF approach yields Finite Differences that appear in stationarity preserving methods for linear acoustics such as \cite{morton01}. This is in particular true for the coefficients $\left(\frac14, \frac12,\frac14\right) $ of the ``mass matrix'' $D_xI_x$.
	\end{itemize}
\end{example}

\subsection{Discrete equilibria and  super-convergence}\label{sec:properties}
In this section, we aim at characterizing explicitly the discrete steady states of the GF variational form \eqref{eq:centralGalerkinGF_matrix_collected}, and at analyzing their consistency with the steady states of the PDE.

%

%

\subsubsection{Characterization of the discrete steady states}

\begin{proposition}[Discrete steady states of the GF discretization]\label{th:div_kernel}
Equations  \eqref{eq:centralGalerkinGF_mat} obtained from the GF formulation \eqref{eq:md-GF-continuous} of \eqref{eq:acoustic-source}
admit the discrete steady states characterized by 
	\begin{align}
		\begin{split}
	               \mathrm{p}_{i,k;j,s} -({\mathrm{K}}_u)_{i,k;j,s} &=  \sigma^y_u(j,s),\\
	               \mathrm{p}_{i,k;j,s} -({\mathrm{K}}_v)_{i,k;j,s} &= \sigma^x_v(i,k),\\
		\mathrm{U}_{i,k;j,s}+\mathrm{V}_{i,k;j,s} - ({\mathrm{K}}_p)_{i,k;j,s} &=\sigma^x_p(i,k)+\sigma^y_p(j,s). \label{eq:discreteUplusV}
		\end{split}
	\end{align}
	where $\sigma^y_u, \sigma^y_p$ are arbitrary functions of $j,s$ ($y$-direction), and $\sigma^x_v, \sigma^x_p$ arbitrary functions of $i,k$ ($x$-direction) only.
\end{proposition}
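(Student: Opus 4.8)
The plan is to read the discrete steady states directly off the collected form \eqref{eq:centralGalerkinGF_matrix_collected}, reducing the whole statement to the characterization of the kernels of the one-dimensional operators $D_x$, $D_y$ and of the tensorized operator $D_x\otimes D_y$. First I would set $\partial_t\mathrm u=\partial_t\mathrm v=\partial_t\mathrm p=0$ in \eqref{eq:centralGalerkinGF_matrix_collected}, which leaves
\[
(D_x\otimes M_y)(\mathrm p-\mathrm K_u)=0,\qquad
(M_x\otimes D_y)(\mathrm p-\mathrm K_v)=0,\qquad
(D_x\otimes D_y)(\mathrm U+\mathrm V-\mathrm K_p)=0,
\]
with $\mathrm U,\mathrm V,\mathrm K_u,\mathrm K_v,\mathrm K_p$ the GF quantities of \eqref{eq:GF-vars}. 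Since the one-dimensional mass matrices are symmetric positive definite, hence invertible, multiplying the first two relations on the left by $\id_x\otimes M_y^{-1}$ and $M_x^{-1}\otimes\id_y$ and using $(A\otimes B)(C\otimes D)=AC\otimes BD$ turns them into the equivalent conditions $(D_x\otimes\id_y)(\mathrm p-\mathrm K_u)=0$ and $(\id_x\otimes D_y)(\mathrm p-\mathrm K_v)=0$.

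The crux is then the one-dimensional lemma: $D_x\mathrm w=0$ if and only if $w_h$ is constant on $\Omega^x_{\dx}$, i.e.\ $\mathrm w_{i,k}$ is independent of $(i,k)$, and symmetrically for $D_y$. I would prove it by observing that with Gauss--Lobatto nodes the entries $\int\varphi_\alpha\,\partial_x\varphi_\beta$ are integrated exactly (the integrand has degree $\le 2K-1$), so $D_x\mathrm w=0$ means $\int_{\Omega^x}\varphi_\alpha\,\partial_x w_h\,\dd x=0$ for all basis functions; integrating by parts and using that the $\varphi_\alpha$ span the appropriate polynomial space forces the cell-wise $L^2$-projection of $w_h$ onto the piecewise-$\mathbb P^{K-1}$ space to be a single constant, after which continuity across elements together with the boundary test functions kills the remaining degree-$K$ Legendre component and leaves $w_h=\mathrm{const}$. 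This is the one-dimensional ingredient behind the kernel of the GF $\mathrm{DIV}$ operator of \cite{brt25}. Combined with the standard decomposition of the kernel of a Kronecker product (kernel of one factor tensored with the whole other space, summed over the two factors), this gives: $(D_x\otimes\id_y)\mathrm w=0$ iff for every $y$-node $(j,s)$ the slice $(i,k)\mapsto\mathrm w_{i,k;j,s}$ is constant, i.e.\ $\mathrm w_{i,k;j,s}=\sigma^y(j,s)$; and $(D_x\otimes D_y)\mathrm w=0$ iff $\mathrm w_{i,k;j,s}=\sigma^x(i,k)+\sigma^y(j,s)$ with $\sigma^x,\sigma^y$ depending on one index each.

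Applying these three facts to $\mathrm w=\mathrm p-\mathrm K_u$, $\mathrm w=\mathrm p-\mathrm K_v$ and $\mathrm w=\mathrm U+\mathrm V-\mathrm K_p$ yields precisely \eqref{eq:discreteUplusV}. The converse is immediate: if the three relations hold then $\mathrm p-\mathrm K_u\in\ker(D_x\otimes\id_y)\subseteq\ker(D_x\otimes M_y)$, and similarly for the others, so all right-hand sides of \eqref{eq:centralGalerkinGF_matrix_collected} vanish and the configuration is stationary; the free functions $\sigma^x_p,\sigma^y_p,\sigma^y_u,\sigma^x_v$ are then fixed by the boundary conditions, in analogy with the continuous characterization \eqref{eq:div_free_GFsteady}. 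I would also invoke Remark~\ref{rem:GFsymm} to ensure the characterization is insensitive to the arbitrary integration constants dropped in \eqref{eq:GF-polynomials2}.

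The step I expect to be the main obstacle is the one-dimensional kernel lemma: controlling the boundary terms in the integration by parts, and, under periodic boundary conditions, excluding spurious checkerboard modes in $\ker D_x$ for unlucky combinations of $K$ and number of cells. Once that is settled, the rest is Kronecker-product bookkeeping.
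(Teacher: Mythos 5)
Your final paragraph (``the converse is immediate'') is in fact the entirety of what the paper proves and, arguably, all that the proposition claims: the stated data \emph{are} steady states. The paper's own proof is three lines long --- it observes that $D_x\sigma^y=0$ and $D_y\sigma^x=0$ (a constant-in-one-index grid function is annihilated by the corresponding derivative matrix, by the partition of unity $\sum_\beta\varphi_\beta\equiv 1$), hence $D_x\otimes D_y(\sigma^x_p+\sigma^y_p)=0$, $D_x\otimes M_y\,\sigma^y_u=0$, $M_x\otimes D_y\,\sigma^x_v=0$, and the right-hand sides of \eqref{eq:centralGalerkinGF_matrix_collected} vanish. That part of your argument is correct and is essentially the paper's proof; the mass-matrix inversion step you use to strip off $M_y$ and $M_x$ is also fine since these are diagonal with positive Gauss--Lobatto weights, though it is not needed for this direction.

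The bulk of your proposal, however, is devoted to the \emph{necessity} direction --- that every discrete steady state has the form \eqref{eq:discreteUplusV} --- and there the key lemma is genuinely false in the settings the paper works in. The claim $\ker D_x=\{\text{constants}\}$ fails under periodic boundary conditions: for $K=1$ the row of $D_x$ at an interior node is the central difference $(-\tfrac12,0,\tfrac12)$, so the alternating mode $\mathrm w_i=(-1)^i$ lies in $\ker D_x$ whenever the number of cells is even; an analogous issue arises with Dirichlet data because the test space is $V_h^0$ and the boundary rows of $D_x$, which would otherwise pin down the checkerboard, are not imposed. You flag this yourself as ``the main obstacle'', but it is not a technicality to be controlled --- it is a counterexample to the lemma as stated, and it is precisely why the proposition (and Proposition~\ref{th:div_kernel-phi}, and the later remarks on the kernel dimension and on redundant conditions) is phrased as ``admits the discrete steady states characterized by'' rather than as an equivalence. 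If you want to keep the necessity direction you must either restrict to boundary conditions and mesh parities for which $\ker D_x$ and $\ker D_y$ really are one-dimensional, or weaken the conclusion to ``modulo spurious modes of $D_x$, $D_y$''. As written, the hard half of your proof cannot be completed, while the easy half --- which is all that is required --- is correct.
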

\begin{proof}
	By construction $D_x\otimes D_y (\sigma^x_p + \sigma^y_p )=0$  as $D_y \sigma^x_p=0$ and  $D_x\sigma^y_p=0$, and similarly $D_x\sigma^y_u=0$  and $D_y\sigma^x_v=0$.
	Thus, the data \eqref{eq:discreteUplusV} is 
	in the kernel of the spatial operator of \eqref{eq:centralGalerkinGF_mat}. 
\end{proof}

The above result provides a global representation of the discrete steady states. 
A more local description of  these states can be obtained as follows. 
Consider the  local assembly
$$
\left[ D_x\otimes D_y(\mathrm{U} + \mathrm{V} -	{\mathrm{K}}_p  )\right]_{\alpha;\beta} =\sum_{E\ni (\alpha;\beta) } \left[ D_x^E \otimes D_y^E \left(  (\iden_x^E \otimes I_y^E ) \, u^E +
   (I_x^E \otimes \iden_y^E)  \, v^E  - (I_x^E \otimes I_y^E  )\, S_p^E
   \right) \right]  _{\alpha;\beta}   ,
$$
where the superscript $^E$ denotes the local operators and arrays in a generic element $E$ containing the point $(\alpha,\beta)$, e.g. $(D_x^E)_{\alpha_1,\alpha_2} := \int_{E^x} \varphi^x_{\alpha_1}(x) \frac{\dd}{\dd x} \varphi^x_{\alpha_2}(x) \,\dd x$. 
Let us now  focus on    element $E=E_{ij}$,  and consider  the local arrays
$$
[u_{0}^E]_{i,s;j,p}:= u_{i,0;j,p}\quad\forall s=0,\dots,K, \qquad [v_0^E]_{i,s;j,p}:= v_{i,s;j,0}\quad\forall p=0,\dots,K.
$$
 Using the fact that $D_x^E \otimes \id_y^E u_0^E=\id_x^E \otimes D_y^Ev_0^E=0$, we can immediately write
 $$
\left[D_x\otimes D_y(\mathrm{U} + \mathrm{V} -	{\mathrm{K}}_p  )\right]_{\alpha;\beta} 
= \sum_{E\ni (\alpha;\beta)} [(D_x^E \otimes D_y^E)\Phi^E_p]_{\alpha;\beta},
$$
where $\Phi^E_p$ is the array of integrated residuals 
\begin{equation}\label{eq:div-residuals0}
\Phi^E_p :=    (\iden_x^E\otimes I_y^E)   (u^E -u_0^E)    + (I_x^E\otimes \iden_y^E)   (v^E -v^E_0)  - (I_x^E \otimes I_y^E  \, S_p^E)  ,
\end{equation}
whose entries are readily shown to be given by
\begin{equation}\label{eq:div-residuals1}
\begin{split}
[\Phi^E_p]_{i,s;j,t} = &   \int_{y_{j,0}}^{y_{j,t}}\!\!( u_h(x_{i,s},y) - u_h(x_{i,0},y))\dd y +  \int_{x_{i,0}}^{x_{i,s}}\!\!( v_h(x,y_{j,t}) - v_h(x,y_{j,0}))\dd x  
-  \int_{x_{i,0}}^{x_{i,s}} \!\! \int_{y_{j,0}}^{y_{j,t}}\!\! (S_p)_h\dd x \dd y \\ =&  \int_{x_{i,0}}^{x_{i,s}}  \int_{y_{j,0}}^{y_{j,t}} ( \partial_x u_h +\partial_yv_h - (S_p)_h )\dd x\dd y. 
\end{split}
\end{equation}
In a similar manner, we can easily show that 
\begin{equation}\label{eq:grad-residuals}
\begin{aligned}
&\left[ D_x \otimes \id_y ( \mathrm{p}  -	\mathrm{K}_u  ) \right] _{\alpha;\beta}  = \sum_{E\ni (\alpha;\beta)} [D_x^E\otimes \id_y^E \Phi^E_u]_{\alpha;\beta}\;,\quad [\Phi^E_u]_{i,s;j,t} =&  \int_{x_{i,0}}^{x_{i,s}}( \partial_x p_h-(S_u)_h)(x,y_{j,t}) \dd x,\\
&\left[ \id_x \otimes D_y ( \mathrm{p}  -	{\mathrm{K}}_v  )\right]_{\alpha;\beta}  = \sum_{E\ni (\alpha;\beta)} [\id_x^E \otimes D_x^E \Phi^E_v]_{\alpha;\beta}\;,\quad [\Phi^E_v]_{i,s;j,t} =& \int_{y_{j,0}}^{y_{j,t}}( \partial_y p_h-(S_v)_h)(x_{i,s},y) \dd y.
\end{aligned}
\end{equation}
If now  we consider the {\it  residual vector} 
\begin{equation}\label{eq:residuals}
\Phi^E =( \Phi^E_u, \Phi^E_v, \Phi^E_p)^t,
\end{equation} 
the components $(i,s;j,t)$ of $\Phi^E$ are integrals of the steady state residual, over sub-cells $[y_{i,0},y_{i,t}]$,  $[x_{i,0},x_{i,s}]$,  and $[x_{i,0},x_{i,s}]\times[y_{i,0},y_{i,t}]$ respectively.
We   can now reformulate the  discrete steady states as follows.
\begin{proposition}[Discrete steady states of the GF  equations and vanishing subcell integrals]\label{th:div_kernel-phi}
The  GF quadrature variational equations  \eqref{eq:centralGalerkinGF_mat}
admit the discrete steady states characterized by the condition
\begin{equation}\label{eq:phi_eq_0}
\begin{split}
\Phi^E_{i,s;j,t} = 0\qquad 
 \forall s,t  \text{ and } \,\forall E_{i,j} ,
\end{split}
\end{equation}
where $\Phi^E$ are the arrays of sub-cell integrated residuals   \eqref{eq:div-residuals1}, \eqref{eq:grad-residuals} and \eqref{eq:residuals}.
\end{proposition}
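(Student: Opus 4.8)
The plan is to show that the conditions in Proposition~\ref{th:div_kernel-phi} are equivalent to those in Proposition~\ref{th:div_kernel}, which has already been established. The key observation is that the spatial operator of \eqref{eq:centralGalerkinGF_mat} acting on the steady state, when localized element-by-element, has been rewritten in \eqref{eq:div-residuals1} and \eqref{eq:grad-residuals} as an assembly of local terms of the form $D_x^E\otimes D_y^E \Phi_p^E$, $D_x^E\otimes\id_y^E\Phi_u^E$, and $\id_x^E\otimes D_y^E\Phi_v^E$, where the entries of the $\Phi^E$ arrays are precisely the sub-cell integrated steady residuals. So the steady-state equations $\mathrm R^u_c=\mathrm R^v_c=\mathrm R^p_c=0$ (with $\partial_t=0$) read, after inverting the invertible mass matrix $M_x\otimes M_y$,
\begin{equation*}
\sum_{E\ni(\alpha;\beta)}[D_x^E\otimes D_y^E\Phi_p^E]_{\alpha;\beta}=0,\qquad
\sum_{E\ni(\alpha;\beta)}[D_x^E\otimes\id_y^E\Phi_u^E]_{\alpha;\beta}=0,\qquad
\sum_{E\ni(\alpha;\beta)}[\id_x^E\otimes D_y^E\Phi_v^E]_{\alpha;\beta}=0,
\end{equation*}
for all global DOFs $(\alpha;\beta)$.

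First I would prove the easy implication: if $\Phi^E_{i,s;j,t}=0$ for all $s,t$ and all elements $E_{ij}$, then every term in each of the three assembly sums vanishes, so the steady-state equations hold trivially. This direction requires nothing beyond inspection.

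The converse is the substantive part. I would argue that $\Phi^E=0$ element-wise is in fact forced. The idea is to recognize that the vanishing of the global assembly, together with the structure of $\Phi^E$ (its entries are sub-cell integrals $\int_{x_{i,0}}^{x_{i,s}}\int_{y_{j,0}}^{y_{j,t}}(\cdots)$, hence they interpolate a polynomial that vanishes along the left and bottom edges of the element and is, up to the constant, the same object glued continuously across element interfaces by the continuity conditions imposed on $\mathrm U,\mathrm V,\mathrm K_p$ in \eqref{eq:GF-polynomials2}). Concretely: from Proposition~\ref{th:div_kernel} the steady data is exactly characterized by $\mathrm U+\mathrm V-\mathrm K_p=\sigma^x_p(i,k)+\sigma^y_p(j,s)$ and the two analogous relations for $\mathrm p-\mathrm K_u$, $\mathrm p-\mathrm K_v$. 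I would substitute these into the definition \eqref{eq:div-residuals0} of $\Phi^E_p=(\id_x^E\otimes I_y^E)(u^E-u_0^E)+(I_x^E\otimes\id_y^E)(v^E-v_0^E)-(I_x^E\otimes I_y^E)S_p^E$, noting that $(\id_x^E\otimes I_y^E)u^E$ restricted to $E$ reconstructs $\mathrm U^E$ up to its edge values $\mathrm U^E_0$, and similarly for $\mathrm V^E$ and $\mathrm K_p^E$. Hence $\Phi^E_p = \mathrm U^E+\mathrm V^E-\mathrm K_p^E$ minus its values on the bottom/left edges, i.e. $\Phi^E_{p,i,s;j,t}=(\sigma^x_p(i,s)+\sigma^y_p(j,t))-(\sigma^x_p(i,s)+\sigma^y_p(j,0))-(\sigma^x_p(i,0)+\sigma^y_p(j,t))+(\sigma^x_p(i,0)+\sigma^y_p(j,0))=0$, since a function of the form $\sigma^x(i,\cdot)+\sigma^y(\cdot,j)$ has vanishing mixed second difference across the corners of the sub-cell. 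The same computation kills $\Phi^E_u$ and $\Phi^E_v$ using $\mathrm p-\mathrm K_u=\sigma^y_u(j,s)$ (constant in $i,k$, so its $x$-sub-cell integral of the derivative vanishes) and $\mathrm p-\mathrm K_v=\sigma^x_v(i,k)$ respectively.

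I expect the main obstacle to be bookkeeping rather than conceptual: carefully verifying that $(\id_x^E\otimes I_y^E)(u^E-u_0^E)$ genuinely equals the restriction of $\mathrm U-\mathrm U_0$ to element $E$ (this is where the definitions \eqref{eq:GF-polynomials2} and \eqref{eq:GF-vars} must be invoked precisely), and making sure the continuity/constant-dropping ambiguity flagged in Remark~\ref{rem:GFsymm} does not spoil the argument — but it cannot, since $\Phi^E$ is built from differences that are invariant under adding element-wise constants. I would close by stating that \eqref{eq:phi_eq_0} is therefore equivalent to \eqref{eq:discreteUplusV}, and hence characterizes the same discrete steady states.
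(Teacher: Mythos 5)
Your proof is correct, and its first half is exactly the paper's (implicit) argument: the paper offers no separate proof environment for Proposition~\ref{th:div_kernel-phi} because the assembly identities \eqref{eq:div-residuals1} and \eqref{eq:grad-residuals} already exhibit each spatial operator as a sum of local terms $D^E\Phi^E$, so $\Phi^E=0$ on every element trivially places the data in the kernel --- your ``easy implication.'' Where you go beyond the paper is the second half: the observation that $[\Phi^E_p]_{i,s;j,t}$ is precisely the mixed second difference of $\mathrm U+\mathrm V-\mathrm K_p$ over the sub-cell corners (and $[\Phi^E_u]$, $[\Phi^E_v]$ are first differences of $\mathrm p-\mathrm K_u$, $\mathrm p-\mathrm K_v$), which vanishes identically on data of the form $\sigma^x+\sigma^y$; this correctly establishes that the characterizations in Propositions~\ref{th:div_kernel} and~\ref{th:div_kernel-phi} describe the same set of states, a consistency check the paper asserts only implicitly. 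Note that neither you nor the paper needs (or proves) the full converse that \emph{every} element of the discrete kernel satisfies \eqref{eq:phi_eq_0}; both propositions are phrased as sufficiency statements, so your additional equivalence argument is a bonus rather than a required step. One cosmetic point: at steady state no inversion of $M_x\otimes M_y$ is needed, since the mass matrix multiplies only the time derivative, but this does not affect the argument.
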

%

Several remarks are in order: 
\begin{enumerate}
\item  {\it From collocated to face averaged data.} The proposed multi-dimensional GF  approach introduces 
a link between a collocated representation based on nodal values, and mimetic approaches based on    face-averaged  and cell-averaged quantities.
The variables   $\mathcal U_h$, $\mathcal V_h$ are integrated values of the  velocities in the directions normal to the faces, 
Similarly,   $(\mathcal K_p)_h$ is a sub-cell averaged term  times a cell volume. These are natural objects to express   integrals  of the form
\begin{align}
	\begin{split}
 \iint (\del_x u + \del_y v-S_p) \dd x \dd y &= \int [u]_x \dd y + \int [v]_y \dd x  - \iint  S_p \dd x \dd y\\&
 = \left[ \int u \dd y \right]_x + \left[ \int v \dd x \right ]_y - \iint  S_p \dd x \dd y,
	\end{split}
\end{align}
which characterize  the steady state. The concepts generalize naturally to 3 space dimensions.
\item  {\it Vanishing residuals.} Characterization \eqref{eq:phi_eq_0} of the steady states  is fully local to an element, and  
involves in principle as many conditions as  the number of nodes in the mesh.  
However,  one easily checks that  there are at most $3\times K^2\times (N_x\times N_y) +  K(N_x+N_y)$ independent conditions, for the  system under consideration.
These can be obtained on each element from the integrals on the smallest
 sub-quadrilaterals (and 1D sub-cells) defined by the Gauss-Lobatto points.
The remaining ones  can be obtained as sum of these integrals.
These   conditions characterize the dimension of kernel of the methods proposed.
This residual property   somewhat generalizes   the steady state preserving
properties of   previously developed residual distribution  methods \cite{HDR-MR,RD-ency,AR:17,Abgrall2022}.
\item {\it Number of boundary conditions.}  The total number of nodal unknowns  when using an interpolation degree $K$ is     $  3\times ( 1+ KN_x)\times (1+ KN_y)  $ for the  system under consideration. Assuming that the number of equations obtained from the vanishing residuals condition are independent, then a unique
equilibrium state would be determined by imposing at least $3\times ( 1+ KN_x)\times (1+ KN_y) - 3\times K^2\times (N_x\times N_y) -  K(N_x+N_y) $ boundary values.
%
This is still often not the case, as \eqref{eq:phi_eq_0} gives often redundant conditions, which makes this number of boundary conditions only 
a lower bound to characterize an equilibrium. In practice, the conditions imposed depend for each problem on the data actually available on each boundary. 
\end{enumerate}

\subsubsection{Consistency analysis}\label{sec:consistency_analysis}

The  next step is to provide consistency estimates of the discrete equilibria identified above. 
To this end, we need to  improve and generalize the nodal consistency argument  given in \cite{brt25}.

\begin{proposition}[Nodal   super-convergence   and line-by-line/row-by-row projection]\label{th:consistency1}
On a Cartesian domain $\Omega = [x_0,x_e]\times[y_0,y_e]$,  consider   a  steady state solution composed of a vector field $\vec{v}_e=(u_e,v_e)$ and a pressure $p_e$ with   sufficient regularity,
say  $u_e,v_e,p_e \in C^{P}(\Omega)$  with $P$ large enough. 
Assume that $(\vec{v}_e,p_e) $    satisfy  
\begin{align*}
\partial_xp_e(x,y)&=S_u(\vec v_e,p_e;x,y),\\
\partial_yp_e(x,y)&=S_v(\vec v_e,p_e;x,y),\\
\partial_x u_e(x,y)+\partial_y v_e(x,y)&=S_p(x,y),
\end{align*}
 at every point in space, and in particular at all collocation points.  Let $ (\vec{v}^{\partial}_e,p^{\partial}_e)=(\vec{v}_e,p_e)_{\partial  \Omega} $ denote
 the boundary values of the steady solution.   Given an ODE $U'(t)=F(U,t)$, let us consider multi-stage  integrators
\begin{equation}\label{eq:ode-int}
U_k-U_0 = (I_xF)_k\;, \quad U_k-U_0 =  (I_yF)_k\;, 
\end{equation}
which are  exact when $F$ is a polynomial of degree  $M$, also for an intermediate stage $k$.  Then, 
there exists
a discrete projection $(\vec{v}_e,p_e)_h$ of the exact solution
on the discrete Finite Element space such that
\begin{itemize}
\item the   order of accuracy    of the projection  only depends on   the accuracy of the  integrators, in particular we have  $(\vec{v}_e,p_e)_h = (\vec{v}_e,p_e) + \mathcal{O}(h^{M})$; 
\item  there exist order $\mathcal{O}(h^{M})$ perturbed boundary values   $(\tilde{ \vec{v}}^{\partial}_e,\tilde{p}^{\partial}_e) =  (\vec{v}_e,p_e)_{\partial  \Omega}+  \mathcal{O}(h^{M})$,
 such that  $(\vec{v}_e,p_e)_h$ is in the kernel of  the GF variational form \eqref{eq:centralGalerkinGF_matrix}, and in particular
 it verifies  Propositions \ref{th:div_kernel} and \ref{th:div_kernel-phi},  with   $(\vec{v}_e,p_e)_h\big|_{\partial  \Omega} =(\tilde{\vec{v}}^{\partial}_e,\tilde{p}^{\partial}_e)$ 
and  with  $(u_e)_h\big|_{\partial  \Omega} =u^{\partial}_e$, $(v_e)_h\big|_{\partial  \Omega} =v^{\partial}_e$,  $(p_e)_h\big|_{\partial  \Omega} =p^{\partial}_e$
holding true on at least one of the four sides of $\partial\Omega$ for each variable;  
\item   the GF variational formulation using Gauss-Lobatto  points  admits super-convergent
 discrete steady states verifying the  error estimate of order $\mathcal{O}(h^{M})$ with $M=K+2$  for polynomial interpolation of degree $K\ge2$.
\end{itemize}
\begin{proof}
See Appendix \ref{app:cons-proof1}.
\end{proof}
\end{proposition}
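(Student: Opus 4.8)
The plan is to construct the required discrete projection explicitly by a dimension-by-dimension (``line-by-line in $x$, row-by-row in $y$'') integration of the steady equations, and to control its error through the accuracy of the integration operators $I_x,I_y$ of \eqref{eq:ode-int}. The point is that, by Propositions~\ref{th:div_kernel} and~\ref{th:div_kernel-phi}, the discrete kernel is \emph{precisely} the set of nodal fields produced by applying these multi-stage integrators to the steady ODEs $\partial_x p=S_u$, $\partial_y p=S_v$ and to $\partial_{xy}G_p=0$; so a discrete equilibrium can be obtained by ``shooting'' from boundary data, and its consistency with the PDE equilibrium is governed entirely by the order of the integrator.

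First I would isolate the quadrature/stage-order lemma responsible for the exponent $M=K+2$. For smooth $f$ with nodal interpolant $\mathrm f$, I would compare $[I_x\mathrm f]_{i,k}$ with $\int_{x_0}^{x_{i,k}}f$, splitting the running integral into the $i$ completed cells plus the terminal partial cell $[x_{i,0},x_{i,k}]$. On Gauss--Lobatto nodes the completed-cell contributions are the $(K{+}1)$-point Gauss--Lobatto quadratures, exact for degree $2K-1$, hence $\mathcal O(h^{2K+1})$ each and $\mathcal O(h^{2K})$ after accumulating over $\mathcal O(1/h)$ cells; the terminal partial cell contributes the integral over a length-$h$ interval of the degree-$K$ interpolation error, i.e. $\mathcal O(h^{K+2})$. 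Thus $[I_x\mathrm f]_{i,k}=\int_{x_0}^{x_{i,k}}f+\mathcal O(h^{\min(2K,K+2)})$, which is $\mathcal O(h^{K+2})$ exactly for $K\ge2$ (for $K=1$ the accumulated $\mathcal O(h^{2K})=\mathcal O(h^2)$ term dominates, giving only $\mathcal O(h^{K+1})$, hence the restriction). The same holds for $I_y$ and, after combining the two one-dimensional bounds, for $I_x\otimes I_y$ on $(S_p)_h$; this makes quantitative the hypothesis that \eqref{eq:ode-int} is exact at degree $M$ including intermediate stages.

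Next I would assemble the candidate. Take $\mathrm u,\mathrm v$ to be the nodal interpolants of $u_e,v_e$, fix $\mathrm S_u,\mathrm S_v,\mathrm S_p$ to the source values the scheme evaluates from them, prescribe the exact left-edge trace $\sigma^y_u(j,s):=p_e(x_0,y_{j,s})$, and set $\mathrm p:=\mathrm K_u+\sigma^y_u$. By Step~1 and $\partial_xp_e=S_u$ this gives $\mathrm p=p_e+\mathcal O(h^M)$ at all nodes, and the first condition of Proposition~\ref{th:div_kernel} holds \emph{exactly}. The remaining two conditions hold only up to defects which I would bound by $\mathcal O(h^M)$ using $\partial_yp_e=S_v$, the curl compatibility \eqref{eq:steady-compatibility-curl}, the pressure compatibility \eqref{eq:steady-compatibility-Sv}, separability of $\mathcal U+\mathcal V-\mathcal K_p$ at the exact steady state, and Step~1. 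Finally I would remove these $\mathcal O(h^M)$ defects by an $\mathcal O(h^M)$ correction of the boundary data on the ``downstream'' edges only, exploiting the hierarchical/triangular structure of the vanishing--subcell--integral characterisation of Proposition~\ref{th:div_kernel-phi}: solving $\Phi^E=0$ by a sweep from one pair of edges exhibits a right inverse of the discrete steady-state operator bounded uniformly in $h$, so an $\mathcal O(h^M)$ defect is killed by an $\mathcal O(h^M)$ correction, leaving one trace per variable exactly equal to the exact one and the interior accuracy intact.

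The step I expect to be the main obstacle is this last one. The three discrete equilibrium conditions are mutually over-determined, exactly as their continuous counterparts are tied together by the curl and pressure compatibility relations, and the discrete operators respect those ties only up to $\mathcal O(h^M)$; showing that the resulting residuals always lie in the range of the discrete steady-state operator and are absorbed by a correction of the same size, \emph{uniformly in the mesh} and without re-polluting the interior, is the delicate point. It is also why the statement must allow $\mathcal O(h^M)$ boundary perturbations on all but one side of each variable, and why the argument leans on the local, marching characterisation of Proposition~\ref{th:div_kernel-phi} rather than on the global one of Proposition~\ref{th:div_kernel}. The order count of Step~1, by contrast, is routine once the Gauss--Lobatto degree of exactness $2K-1$ is invoked.
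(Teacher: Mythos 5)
Your Step 1 is correct and is actually more self-contained than what the paper does: the decomposition of the running-integral error into accumulated full-cell Gauss--Lobatto quadrature errors of total size $\mathcal{O}(h^{2K})$ plus a terminal partial-cell interpolation error of size $\mathcal{O}(h^{K+2})$ gives exactly $M=\min(2K,K+2)=K+2$ for $K\ge 2$ (and only $K+1$ for $K=1$), whereas Appendix~\ref{app:cons-proof1} simply cites the stage order of LobattoIIIA from \cite{hairer}. The genuine gap is in Steps 2--3. The paper does \emph{not} start from the nodal interpolant: it defines $\mathrm u,\mathrm v,\mathrm p$ directly as the output of the line-by-line/row-by-row integrators applied to the nodal values of $\partial_x u_e$, $\partial_y v_e$, $(S_u)_h$, $(S_v)_h$ (cf.\ \eqref{eq:proj1}--\eqref{eq:pproj1} and \eqref{eq:linebyline}), so that identities such as $\mathrm u-(I_x\otimes\id_y)(\partial_xu_e)_h=\mathrm{c}_x(y)$ hold \emph{exactly}, kernel membership is automatic, and the $\mathcal{O}(h^{M})$ accuracy is then read off from your Step~1 applied to these running integrals. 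You instead fix $\mathrm u,\mathrm v$ to the exact nodal values, which pushes an $\mathcal{O}(h^{M})$ defect into the kernel conditions themselves and forces the correction step that you yourself identify as the main obstacle.

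That correction step does not work as you state it. The failure of $\mathrm U+\mathrm V-\mathrm K_p$ to be separable, and of $\mathrm p-\mathrm K_v$ to depend on the $x$-index only, is a node-by-node \emph{interior} condition; no $\mathcal{O}(h^{M})$ modification of boundary data on ``downstream'' edges can remove it --- the interior nodal values of $u$, $v$, $p$ must themselves be perturbed by $\mathcal{O}(h^{M})$. Under the alternative reading of your Step 3, namely re-solving $\Phi^E=0$ by sweeping inward from two edges, you are reconstructing exactly the paper's marching projection, at which point the interpolant of Step 2 is irrelevant and the ``uniformly bounded right inverse'' you invoke but do not prove is precisely the global stability/accuracy of the quadrature-based integrator that the direct construction delivers for free. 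You also miss the mechanism behind the statement's boundary perturbations: the two one-dimensional pressure reconstructions $\hat p_h$ (integrating $S_u$ in $x$) and $\bar p_h$ (integrating $S_v$ in $y$) are each exactly in the kernel of one velocity equation but differ by $\mathcal{O}(h^{M})$, and it is the $\lambda$-weighted reconciliation \eqref{eq:pressureBC} of these two that produces the admissible $\mathcal{O}(h^{M})$ boundary data perturbation while keeping the exact trace on one side per variable.
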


We would like to make  several    remarks. 
One is that the accuracy of the integrators $M$ is not necessarily related to the polynomial degree of the finite element space.
Examples using multi-step--like procedures with  accuracy independent on the polynomial representation are provided in the finite-volume context in \cite{KAZOLEA2025106646}.  Similar approaches are possible
in the Finite Element setting and will be explored in the future.  Second, as already remarked, boundary conditions play a fundamental role in the definition of the  discrete steady states. 
This is especially true  when accounting for the  presence  of sources, which make the use of  homogeneous or periodic boundary conditions not always suitable.

 The  above proposition and its proof provide two important elements. The first message is that, even if one does not use well prepared initial data,
the discrete solution   will settle, after some short transient, on a discrete steady state that has  accuracy  $h^M$ everywhere, including on the boundaries.
The numerical results reported in section~\ref{sec:numerics} provide examples of this. Secondly, the proof is based on a constructive   
approach that explicitly introduces a projection of exact steady states onto the kernel of the scheme. This projection is based on 
a line-by-line/row-by-row  integration method. This method, which is recalled in more detail the next section (see also Appendix \ref{app:cons-proof1}),
provides at least one  way of constructing well prepared data that are exactly preserved by the scheme. 
This  is very useful when studying the evolution of very small perturbations.  The next section gives a more in depth discussion on other approaches to obtain well prepared data.

\subsubsection{Projections and well prepared initial data in practice}\label{sec:init_data}

We discuss ways of obtaining well-prepared data. In this case,  we are given a steady state of the PDE and aim at obtaining discrete data 
which are a steady state of the numerical method, and verify the accuracy estimate of proposition \ref{th:consistency1}.

A first approach to achieve this objective is  constructed in the proof of proposition \ref{th:consistency1} (see Appendix \ref{app:cons-proof1}). It consists
of  line-by-line/row-by-row iterations exploiting the integration tables $I_x$ and $I_y$.  Given boundary data $ \tilde u(x_0,y)$, $ \tilde v(x,y_0)$, and $\tilde p(x_0,y_0)$,
it boils down to evaluating nodal values of the unknowns from the  integrals  
\begin{equation}\label{eq:linebyline}
\begin{aligned}
u(x,y) = & \tilde u(x_0,y)  - \int_{x_0}^x ( \partial_xu_e  )_h(s,y) ds \\
v(x,y) =& \tilde v(x,y_0)  -  \int_{y_0}^y ( \partial_y v_e  )_h(x,s) ds  \\
p(x,y) =& \tilde p(x_0,y_0)  + (1-\lambda)\left( \int_{x_0}^x ( \partial_xp_e  )_h(s,y_0) ds  +  \int_{y_0}^y ( \partial_y p_e  )_h(x,s) ds \right)  \\&\qquad\;\;\;\quad+ \lambda\left(
 \int_{y_0}^y ( \partial_y p_e  )_h(x_0,s) ds  +  \int_{x_0}^x ( \partial_xp_e  )_h(s,y) ds \right)
\end{aligned}
\end{equation}
with $\lambda\in\mathbb{R}$ arbitrary, and having denoted by  $( \partial_xu_e  )_h$ the Lagrange interpolant on the Gauss-Lobatto points of the   partial derivative of the  exact velocity,
and similarly for the other quantities.   As shown in Appendix \ref{app:cons-proof1}
the discrete  data \eqref{eq:linebyline} above 
provide a projection of an exact  stationary solution of the PDE into the kernel of the scheme. They satisfy the consistency estimates of the proposition.
In~\cite{brt25} the above projection was called line-by-line/row-by-row (LobattoIIIA) projection (when the integrator tables are those obtained from the Gauss-Lobatto collocation points). 

%

The directional line-by-line integration can be easily set up and  provides quite satisfactory results. 
Unfortunately, it relies on the correct definition of the initial values on a specific boundary, and  a choice of   the integration direction. It thus  
introduces an asymmetry  in the   error of the initial data with respect to the exact one. 
This lack of symmetry is in contrast with the symmetry property of the  GF quadrature recalled in Remark~\ref{rem:GFsymm}.
To improve on this, one could imagine some iterative correction combining ODE integration in opposite directions. 

%

A more efficient approach, used here, is to   obtain well-prepared data  using a projection based on a global
optimization.  Compared to the homogeneous case \cite{brt25}, 
here we need to account for the complexity introduced by the forcing terms both in the PDE and in the boundary conditions.
%
%
Given an initial steady state $(\vec v_0 ,p_0)$,
we first compute the nodal components of the velocity  as
\begin{subequations}\label{eqs:initialization}
\begin{equation}\label{eq:optimization}
\mathop{\mathrm{arg\,min}}\limits_{\mathrm u,\mathrm v \in \mathcal{C}} \left( \lVert \mathrm u-\mathrm u_0\rVert^2+\lVert \mathrm v-\mathrm v_0\rVert^2 \right), \qquad \mathcal{C}:=\lbrace \mathrm u,\mathrm v:  {D}_x \otimes D_y I_y \mathrm u + D_xI_x \otimes {D}_y \mathrm  v=\mathrm S_p\rbrace,
\end{equation}
where $\mathrm u_0, \mathrm v_0$ are the interpolation of the analytical initial conditions. Once these are known,
we reconstruct $\mathrm p$ using the compatibility conditions \eqref{eq:steady-compatibility-p} with $\lambda=\frac12$, which reads
\begin{align}\label{eq:pressure_ic}
	\begin{split}
p_h(x_\alpha,y_\beta):=\lambda  \left(  \int_{x_0}^{x_\alpha} S_u(\xi,y_\beta) \dd \xi + p_e(x_0,y_\beta) \right) + (1-\lambda) \left( \int_{y_0}^{y_\beta} S_v(x_\alpha,\xi) \dd \xi +p_e(x_\alpha,y_0)\right),
\end{split}
\end{align}
\end{subequations}
where, as everywhere else in this paper, the  source  terms are expanded on the finite element basis, and their  integrals are simply evaluated by multiplication of the nodal  array of source terms with 
the operators $I_x$ and $I_y$.  To solve the   optimization problem \eqref{eq:optimization}, we use the trust-region constrained method which is  implemented in \texttt{scipy.optimize.minimize}. On the other hand, we recall that the so obtained $p$ is not in both kernels of the two velocity evolution operators, but it carries an $O(h^{M})$ error in the non-homogeneous case, because of the boundary conditions.

 Finally,
we will  also use 
the simple strategy of running a long-time simulation and using its result as the equilibrium value for a perturbation simulation, adding the perturbation on top of it. 

\section{Stabilization methods in multi-dimensional Global Flux form}\label{sec:GF_stab}

In this section, we    adapt  the SU and OSS stabilization techniques to the GF quadrature framework, including source terms as well.
For both approaches, we will study the kernels, showing that  Proposition~\ref{th:div_kernel-phi} still holds, and
verify the consistency estimates associated to Proposition \ref{th:consistency1}. 
 
\subsection{SU-GF stabilization} 
We start from the integral form associated to the streamline upwind stabilization, presented in  equation \eqref{eq:SUPG_weak} and use the GF form \eqref{eq:md-GF-continuous}. This leads to
\begin{equation}\label{eq:SUPG-GF_weak}
	\begin{cases}
		\int_{\Omega} \alpha h \,\partial_x \varphi \left( \partial_t p_h + \partial_{xy}(G_{p})_h  \right)\dd x\dd y,\\
		\int_{\Omega} \alpha h \,\partial_y \varphi \left( \partial_t p_h +  \partial_{xy}(G_{p})_h \right)\dd x\dd y,\\
		\int_{\Omega} \alpha h  \, \nabla  \varphi\cdot  \left[ \partial_t \vec v_h + \nabla p_h - \vecc{\del_x (\mathcal{K}_u)_h}{\del_y (\mathcal K_v)_h}   \right]\dd x\dd y.
	\end{cases}
\end{equation}
Using the same approximation ansatz used for the GF Galerkin approximation, we can 
  readily write the matrix form for the GF approximation: 
\begin{equation}\label{eq:SUPG_GF_matrix_collect}
	\begin{cases}
	&\alpha h \left( {D}^x \otimes {M}_y \partial_t \mathrm{p}+ {D}^x_x \otimes {D}_y\left( \mathrm{U}
		+ \mathrm{V} -	{\mathrm{K}}_p   \right) \right) ,\\
		&\alpha h \left( {M}_x \otimes {D}^y \partial_t \mathrm{p}+ {D}_x \otimes {D}^y_y \left( \mathrm{U}
		+ \mathrm{V} -	{\mathrm{K}}_p   \right) \right),\\
		&\alpha h \left( {D}^x \otimes {M}_y \partial_t \mathrm{u}+ {D}^x_x \otimes {M}_y \left( \mathrm{p}- \mathrm K_u \right)\right) +\\
		&\phantom{mmmmmm}\alpha h \left( {M}_x \otimes {D}^y \partial_t \mathrm{v} + {M}_x \otimes {D}^y_y \left(\mathrm{p} -\mathrm K_v \right)  \right).
%
	\end{cases}
\end{equation}
or, expanding the integrated velocities $\mathrm U$, and $\mathrm V$ and sources $\mathrm K_u$, $\mathrm K_v$, $\mathrm K_p$
\begin{equation}\label{eq:SUPG_GF_matrix_collected}
	\begin{cases}
		\textrm {ST}^u_\text{SU-GF}(\mathrm{u},\mathrm{v},\mathrm{p}) &= \alpha h \left( {D}^x \otimes {M}_y \partial_t \mathrm{p}+ {D}^x_x \otimes {D}_y \left( \id_x\otimes I_y \mathrm{u} +I_x \otimes \id_y  \mathrm{v} - I_x \otimes I_y \mathrm{S}_p   \right)\right),\\
		\textrm {ST}^v_\text{SU-GF}(\mathrm{u},\mathrm{v},\mathrm{p}) &= \alpha h \left( {M}_x \otimes {D}^y \partial_t \mathrm{p}+ {D}_x \otimes {D}^y_y \left( \id_x \otimes  I_y \mathrm{u} +I_x \otimes \id_y  \mathrm{v} -I_x \otimes  I_y \mathrm{S}_p   \right)\right),\\
		\textrm {ST}^p_\text{SU-GF}(\mathrm{u},\mathrm{v},\mathrm{p}) &= \alpha h \left( {D}^x \otimes {M}_y \partial_t \mathrm{u}+ {D}^x_x \otimes {M}_y \left( \mathrm{p}- I_x \otimes \id_y \mathrm{S}_u \right)\right) +\\
		&\phantom{mmmmmm}\alpha h \left( {M}_x \otimes {D}^y \partial_t \mathrm{v} + {M}_x \otimes {D}^y_y \left(\mathrm{p} -\id_x \otimes I_y \mathrm{S}_v\right)  \right).
	\end{cases}
\end{equation}
Comparing this expression to \eqref{eq:SUPG_matrix}, one observes
that  the second derivatives are applied to the integrated, global quantities, which contain the sources and all the space derivatives, i.e., all terms governing the (discrete) steady states
discussed in section~\ref{sec:GF_source}. In contrast, standard approaches apply stabilization differently to different terms and thus equilibria between these terms are easily destroyed.

The full multi-dimensional GF Streamline Upwind stabilized scheme  is defined by
\begin{equation}\label{eq:supg-gf}
\begin{aligned}
\mathrm R_c^u(\mathrm{u},\mathrm{v},\mathrm{p}) + 	\mathrm{ST}^u_\text{SU-GF}(\mathrm{u},\mathrm{v},\mathrm{p}) =0 ,\\
\mathrm R_c^v(\mathrm{u},\mathrm{v},\mathrm{p}) + 	\mathrm{ST}^v_\text{SU-GF}(\mathrm{u},\mathrm{v},\mathrm{p}) =0 ,\\
\mathrm R_c^p(\mathrm{u},\mathrm{v},\mathrm{p}) + 	\mathrm{ST}^p_\text{SU-GF}(\mathrm{u},\mathrm{v},\mathrm{p}) =0 ,
\end{aligned}
\end{equation}
with  GF Galerkin centered operators defined in \eqref{eq:centralGalerkinGF_matrix_collected}
  and stabilization given by \eqref{eq:SUPG_GF_matrix_collected}. 
The steady states of the stabilized scheme are characterized by the following statement. 
\begin{proposition}[Streamline Upwind stabilized GF scheme: discrete solutions and consistency]\label{th:stab-su}    The  discrete steady kernel of the
stabilized scheme \eqref{eq:supg-gf}    verifies both  propositions \ref{th:div_kernel} and \ref{th:div_kernel-phi}.
\begin{proof}
The proof of the first point is identical to those of propositions  \ref{th:div_kernel} and \ref{th:div_kernel-phi}, applied to the discrete stabilization operators \eqref{eq:SUPG_GF_matrix_collect}. 
%
\end{proof}
\end{proposition}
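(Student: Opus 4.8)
The plan is to evaluate the stabilized system \eqref{eq:supg-gf} on a generic stationary configuration and to show that it vanishes term by term: both the central GF residuals $\mathrm R_c^\bullet$ of \eqref{eq:centralGalerkinGF_matrix_collected} and the stabilization residuals $\mathrm{ST}^\bullet_\text{SU-GF}$ of \eqref{eq:SUPG_GF_matrix_collected} separately annihilate any data satisfying \eqref{eq:discreteUplusV}. Setting $\partial_t\mathrm u=\partial_t\mathrm v=\partial_t\mathrm p=0$ and using \eqref{eq:GF-vars}, the stabilized steady equations read
\begin{align*}
D_x\otimes M_y\,(\mathrm p-\mathrm K_u)+\alpha h\,D^x_x\otimes D_y\,(\mathrm U+\mathrm V-\mathrm K_p)&=0,\\
M_x\otimes D_y\,(\mathrm p-\mathrm K_v)+\alpha h\,D_x\otimes D^y_y\,(\mathrm U+\mathrm V-\mathrm K_p)&=0,\\
D_x\otimes D_y\,(\mathrm U+\mathrm V-\mathrm K_p)+\alpha h\,\left(D^x_x\otimes M_y\,(\mathrm p-\mathrm K_u)+M_x\otimes D^y_y\,(\mathrm p-\mathrm K_v)\right)&=0.
\end{align*}
The only novelty with respect to the central case is that the stiffness operators $D^x_x$, $D^y_y$ now act on exactly the same global quantities $\mathrm U+\mathrm V-\mathrm K_p$, $\mathrm p-\mathrm K_u$, $\mathrm p-\mathrm K_v$ that already govern the central kernel --- which is the whole point of the GF quadrature.

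The key algebraic observation I would use is that each of the one-dimensional operators $D_x$ and $D^x_x$, as well as their $y$-analogues, annihilates constant vectors: by the partition-of-unity identity $\sum_\beta\varphi_\beta\equiv 1$ one has $\sum_\beta(D_x)_{\alpha\beta}=\int\varphi_\alpha\,(\sum_\beta\varphi_\beta)'=0$, and likewise $\sum_\beta(D^x_x)_{\alpha\beta}=0$. Hence a degrees-of-freedom array depending only on the $y$-indices lies in the kernel of both $D_x\otimes(\cdot)$ and $D^x_x\otimes(\cdot)$, and symmetrically in $y$. Substituting $\mathrm p-\mathrm K_u=\sigma^y_u$, $\mathrm p-\mathrm K_v=\sigma^x_v$ and $\mathrm U+\mathrm V-\mathrm K_p=\sigma^x_p+\sigma^y_p$ from \eqref{eq:discreteUplusV}, the $x$-operators annihilate $\sigma^y_u$ and the $\sigma^y_p$ part while the $y$-operators annihilate $\sigma^x_v$ and the $\sigma^x_p$ part, so $D_x\otimes M_y\,\sigma^y_u=0$, $M_x\otimes D_y\,\sigma^x_v=0$, $D_x\otimes D_y\,(\sigma^x_p+\sigma^y_p)=0$, $D^x_x\otimes D_y\,(\sigma^x_p+\sigma^y_p)=D_x\otimes D^y_y\,(\sigma^x_p+\sigma^y_p)=0$, and $D^x_x\otimes M_y\,\sigma^y_u=M_x\otimes D^y_y\,\sigma^x_v=0$. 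Thus every summand of the three steady equations vanishes on its own: the data lies in the kernel of \eqref{eq:supg-gf}, the stabilization residuals are themselves zero, and the steady kernel coincides with that of the central GF scheme. This establishes Proposition~\ref{th:div_kernel}. Proposition~\ref{th:div_kernel-phi} then follows immediately, since --- as shown in the passage leading to \eqref{eq:div-residuals1} and \eqref{eq:phi_eq_0} --- the sub-cell residual arrays $\Phi^E_u$, $\Phi^E_v$, $\Phi^E_p$ are exactly the local representatives of $\mathrm p-\mathrm K_u-\sigma^y_u$, $\mathrm p-\mathrm K_v-\sigma^x_v$, $\mathrm U+\mathrm V-\mathrm K_p-\sigma^x_p-\sigma^y_p$, so the condition $\Phi^E=0$ on all sub-quadrilaterals describes the same set. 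Because the kernel is unchanged, the line-by-line/row-by-row LobattoIIIA projection built in the proof of Proposition~\ref{th:consistency1} still lands in it, and the $\mathcal{O}(h^{K+2})$ super-convergence estimate carries over verbatim to the stabilized scheme.

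The only step beyond bookkeeping --- needed if one additionally wants the converse, that a steady state of \eqref{eq:supg-gf} must satisfy \eqref{eq:discreteUplusV} --- is the energy balance of Appendix~\ref{sec:app-stabSU}: testing the three equations against $\mathrm u$, $\mathrm v$, $\mathrm p$ in the mass-weighted inner product, the central flux contributions cancel by skew-symmetry up to boundary terms, while the SU-GF stabilization produces a sign-definite dissipation whose vanishing at a steady state forces $\mathrm{ST}^\bullet_\text{SU-GF}=0$, hence $\mathrm R_c^\bullet=0$ and Proposition~\ref{th:div_kernel}. I expect this last implication to be the main obstacle: one must verify that the GF dissipation is genuinely definite in the global variables $\mathrm U+\mathrm V-\mathrm K_p$ and $\mathrm p-\mathrm K_u$, $\mathrm p-\mathrm K_v$, so that its vanishing pins down each stabilization residual individually rather than merely a linear combination of them; this is exactly where the interplay between the stiffness matrices $D^x_x$, $D^y_y$ and the mixed operator $D_x\otimes D_y$ must be exploited, in the spirit of the homogeneous analysis of \cite{brt25}.
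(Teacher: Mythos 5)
Your argument is correct and is exactly the paper's: the paper's one-line proof simply reapplies the kernel arguments of Propositions~\ref{th:div_kernel} and \ref{th:div_kernel-phi} to the stabilization operators \eqref{eq:SUPG_GF_matrix_collect}, which is what you do in detail (the partition-of-unity identity showing $D_x$ and $D^x_x$ annihilate arrays constant in $x$, hence every summand of the steady system vanishes separately on data satisfying \eqref{eq:discreteUplusV}). Your closing paragraph on the converse (that every steady state of \eqref{eq:supg-gf} must satisfy \eqref{eq:discreteUplusV}, via the energy estimate) goes beyond what the proposition claims and beyond what the paper proves, so its unresolved status is not a gap in your proof of the stated result.
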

%
%
%
We recall  that  the consistency estimate is related to the fact that discrete solutions are obtained using a    line-by-line/row-by-row projection  \eqref{eq:linebyline}.
In particular, when this projection is  performed with the LobattoIIIA solver, we obtain a consistency of order $K+2$ for interpolation of degrees $K\ge 2$, see \cite{hairer}, which leads to the super-convergence result.
We conclude that the SU-GF scheme has the same stationarity preservation properties as the unstabilized method.

\subsection{OSS-GF stabilization}
Similarly to the previous section, we now apply
the variational form associated to the OSS stabilization method to
the GF formulation:
\begin{equation}\label{eq:OSS-GF_weak}
	\begin{cases}
		\int_{\Omega} \alpha h \, \partial_x \varphi  \left(  \partial_{xy}(G_{p})_h - w^{\partial_{xy}G_{p}}  \right)\dd x\dd y,\\
		\int_{\Omega} \alpha h \,\partial_y \varphi \left( \partial_{xy}(G_{p})_h - w^{\partial_{xy}G_{p}}  \right)\dd x\dd y,\\
		\int_{\Omega} \alpha h   \,\partial_x\varphi \left[ \partial_x ( p -   \mathcal{K}_{u} )_h  - w^{\partial_x p}  \right]\dd x\dd y
		+  \int_{\Omega} \alpha h   \partial_y\varphi \left[ \partial_y ( p -   \mathcal{K}_{v} )_h  - w^{\partial_y p }  \right]\dd x\dd y,
	\end{cases}
\end{equation}
complemented with the $L^2$ projections 
\begin{equation}\label{eq:OSS-GF_L2proj}
	\begin{cases}
		  \int_{\Omega}  \varphi  \left( w^{\partial_{xy}G_{p}} - \partial_{xy}(G_p)_h    \right)\dd x\dd y &= 0,\\
		 \int_{\Omega}  \varphi  \left( w^{\partial_x p} - \partial_x (p-\mathcal{K}_{u})_h    \right)\dd x\dd y &=0,\\
		 \int_{\Omega}  \varphi  \left( w^{\partial_y p} - \partial_y (p-\mathcal{K}_{v})_h    \right)\dd x\dd y &=0.
	\end{cases}
\end{equation}
Differently from the standard formulation, in this case we do not have cancellation of so many terms in  the assembly.   
However, all terms group together nicely in terms of the GF.
After some simplifications, we obtain the following matrix form of the OSS-GF stabilization (cf. \eqref{eq:GF-vars} and \eqref{eq:def_Zmatrix} for the notation):
\begin{equation}\label{eq:OSS_GF}
	\begin{cases}
		\mathrm{ST}^u_\text{OSS-GF}(\mathrm{u},\mathrm{v},\mathrm{p})=\alpha h \,Z_x \otimes  D_y ( U + V - K_p),\\
		\mathrm{ST}^v_\text{OSS-GF}(\mathrm{u},\mathrm{v},\mathrm{p})=\alpha h \,D_x \otimes  Z_y ( U + V - K_p),\\
		\mathrm{ST}^p_\text{OSS-GF}(\mathrm{u},\mathrm{v},\mathrm{p})=\alpha h \left[ Z_x \otimes  M_y (\mathrm{p} -K_u) + M_x\otimes Z_y  (\mathrm{p} -K_v)  \right].
	\end{cases}
\end{equation}
These expressions are somewhat similar to \eqref{eq:OSS_stab_mat}, but 
involve now the  multi-dimensional GFs, which is the key to the steady state preserving property of the method. 
The full expressions in terms of the arrays of pressure and velocities are easily obtained by combining the above formulas with \eqref{eq:GF-vars}. 
The full multi-dimensional GF OSS stabilized scheme  is defined by
\begin{equation}\label{eq:oss-gf}
\begin{aligned}
\mathrm R_c^u(\mathrm{u},\mathrm{v},\mathrm{p}) + 	\mathrm{ST}^u_\text{OSS-GF}(\mathrm{u},\mathrm{v},\mathrm{p}) =0 ,\\
\mathrm R_c^v(\mathrm{u},\mathrm{v},\mathrm{p}) + 	\mathrm{ST}^v_\text{OSS-GF}(\mathrm{u},\mathrm{v},\mathrm{p}) =0 ,\\
\mathrm R_c^p(\mathrm{u},\mathrm{v},\mathrm{p}) + 	\mathrm{ST}^p_\text{OSS-GF}(\mathrm{u},\mathrm{v},\mathrm{p}) =0 ,
\end{aligned}
\end{equation}
with  GF Galerkin centered operators defined in \eqref{eq:centralGalerkinGF_matrix_collected}, 
and stabilization given by \eqref{eq:OSS_GF} combined with  \eqref{eq:GF-vars}. 
The form of the OSS-GF stabilization allows to easily prove properties similar to those of the SU-GF scheme.  

%
\begin{proposition}[Orthogonal Subscale Stabilized GF scheme: discrete solutions and consistency]\label{th:stab-oss}  
The  discrete steady kernel of the
stabilized scheme \eqref{eq:oss-gf}    verifies both  propositions \ref{th:div_kernel} and \ref{th:div_kernel-phi}.
\begin{proof}
The proof of the first point is identical to those of propositions  \ref{th:div_kernel} and \ref{th:div_kernel-phi}, applied to the discrete stabilization operators \eqref{eq:OSS_GF}. 
\end{proof}
\end{proposition}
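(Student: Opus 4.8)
The plan is to prove the statement by the same structural argument used for Propositions~\ref{th:div_kernel} and~\ref{th:div_kernel-phi}: everything hinges on the fact that the OSS--GF stabilization \eqref{eq:OSS_GF} is built, exactly like the GF Galerkin operators \eqref{eq:centralGalerkinGF_matrix_collected}, from tensor products of one-dimensional operators acting on the three global-flux combinations $\mathrm{p}-\mathrm{K}_u$, $\mathrm{p}-\mathrm{K}_v$ and $\mathrm{G}:=\mathrm{U}+\mathrm{V}-\mathrm{K}_p$. First I would record the one-dimensional kernel facts already exploited in the proof of Proposition~\ref{th:div_kernel}: $D_x$ and $D^x_x$ annihilate any array constant in the $x$-index, and hence so does $Z_x=D^x_x-D^xM_x^{-1}D_x$; the same holds in the $y$-direction and, element by element, for the local matrices $D^E_x$, $(D^x_x)^E$, $Z^E_x$, since on a single element the Lagrange basis is a partition of unity.

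Next I would verify the characterization \eqref{eq:discreteUplusV}. Take any array in that set and set all time derivatives to zero in \eqref{eq:oss-gf}. By Proposition~\ref{th:div_kernel} the GF Galerkin residuals $\mathrm R^u_c,\mathrm R^v_c,\mathrm R^p_c$ already vanish, so it suffices to show the three stabilization terms vanish as well. Substituting $\mathrm{G}=\sigma^x_p+\sigma^y_p$ into $\mathrm{ST}^u_{\text{OSS-GF}}=\alpha h\,Z_x\otimes D_y\,\mathrm{G}$, the summand $\sigma^x_p$, being constant in the $y$-index, is annihilated by the factor $D_y$, and $\sigma^y_p$ by the factor $Z_x$, so $\mathrm{ST}^u_{\text{OSS-GF}}=0$; likewise $\mathrm{ST}^v_{\text{OSS-GF}}=\alpha h\,D_x\otimes Z_y\,\mathrm{G}=0$. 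For the pressure equation, $\mathrm{p}-\mathrm{K}_u=\sigma^y_u$ is constant in the $x$-index, so $Z_x\otimes M_y(\mathrm{p}-\mathrm{K}_u)=0$, and $\mathrm{p}-\mathrm{K}_v=\sigma^x_v$ is constant in the $y$-index, so $M_x\otimes Z_y(\mathrm{p}-\mathrm{K}_v)=0$; hence $\mathrm{ST}^p_{\text{OSS-GF}}=0$. This shows that \eqref{eq:oss-gf} admits the steady states of Proposition~\ref{th:div_kernel}. For the subcell form of Proposition~\ref{th:div_kernel-phi}, I would repeat the element-wise manipulation that produced \eqref{eq:div-residuals1}--\eqref{eq:grad-residuals}: using the local partition-of-unity identities above, the assembled OSS--GF stabilization equals $\sum_E Z^E_x\otimes D^E_y\,\Phi^E_p$ (and the two analogous expressions), so it vanishes whenever $\Phi^E\equiv 0$. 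Finally, since the discrete steady states of \eqref{eq:oss-gf} include the projected states furnished by the line-by-line/row-by-row construction \eqref{eq:linebyline}, the consistency and super-convergence estimates of Proposition~\ref{th:consistency1} transfer verbatim, completing the ``consistency'' part of the statement.

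The part I expect to require the most care — and which, strictly speaking, is not covered by the argument above, just as it is not covered by the proof of Proposition~\ref{th:div_kernel} — is the converse inclusion: showing that the stabilization does not \emph{enlarge} the steady kernel, i.e.\ that every solution of $\mathrm R^\bullet_c+\mathrm{ST}^\bullet_{\text{OSS-GF}}=0$ with vanishing time derivatives already satisfies \eqref{eq:discreteUplusV}. The natural route is an energy argument: pairing the stabilized semi-discrete system with the solution, the GF Galerkin part is skew-symmetric and the OSS--GF part contributes a sum of positive semidefinite quadratic forms in the fluctuations $Z_x\otimes D_y\,\mathrm{G}$, $D_x\otimes Z_y\,\mathrm{G}$, $Z_x\otimes M_y(\mathrm{p}-\mathrm{K}_u)$, $M_x\otimes Z_y(\mathrm{p}-\mathrm{K}_v)$; at a steady state the total dissipation must vanish, forcing each fluctuation to vanish, after which one argues exactly as in the homogeneous OSS analysis of \cite{brt25} (the source arrays enter only through the fixed combinations $\mathrm{p}-\mathrm{K}_u$, $\mathrm{p}-\mathrm{K}_v$, $\mathrm{G}$ and therefore do not affect the kernel computation). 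Since the proposition as stated only asserts that the stabilized scheme \emph{admits} those steady states, I would relegate this converse to a remark.
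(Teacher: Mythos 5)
Your proposal is correct and follows essentially the same route as the paper: the paper's proof is a one-line reference to applying the kernel arguments of Propositions~\ref{th:div_kernel} and~\ref{th:div_kernel-phi} to the operators \eqref{eq:OSS_GF}, which is exactly the computation you carry out explicitly (that $Z_x$, $Z_y$, $D_x$, $D_y$ annihilate the respective constant-in-one-index summands of $\mathrm{p}-\mathrm{K}_u$, $\mathrm{p}-\mathrm{K}_v$ and $\mathrm{U}+\mathrm{V}-\mathrm{K}_p$). Your closing remark on the converse inclusion is a fair observation but, as you note, is not claimed by the proposition and is likewise not addressed in the paper.
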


Once more we recall  that  the quality of the consistency estimate is related to the super-convergence of the LobattoIIIA
ODE integrator  \cite{hairer} involved in the
line-by-line/row-by-row projection  \eqref{eq:linebyline}.  
 The analysis shows that the OSS-GF scheme  has  the same steady state preservation properties as
 the unstabilized method.

\section{Explicit time integration using deferred correction} \label{sec:DeC}

In all the previous formulations, we have always presented the time-continuous form. In order to obtain an explicit and arbitrarily high-order time discretization, we will employ the Deferred Correction (DeC) method \cite{dutt2000spectral,minion2003semi,abgrall2017high,micalizzi2022new}. This technique consists of defining two discretizations: $\mathcal{L}^2$ a high order implicit method, e.g., Lobatto IIIA, and $\mathcal{L}^1$ an explicit low order version of $\mathcal{L}^2$.

%
%
%
%
%
%
%
%
%
To define the high order operator $\mathcal{L}^2$, we introduce a continuous FEM discretization of the time interval $[t_n,t_{n+1}]$ through some Lagrangian basis functions $\gamma_{r}(t)$ for $r=0,\dots,M$, defined on the Gauss-Lobatto temporal sub-levels $t_n=t_n^0<\dots < t^M_n=t_{n+1}$. For an ODE $q'+F(q)=0$, let us denote by $q^m\approx q(t_n^m)$ for $m=0,\dots,M$ the approximation of $q$ in the sub-time level. Let  $\underline{q}$ denote the array of all the $q^m$. For each sub-level $m=1,\dots,M$, we define the high-order operator
\begin{equation}
	\mathcal{L}^{2,m}(\underline{q}) := q^m-q^0 + \sum_{r=0}^M F(q^r)\int_{t_n^0}^{t^m_n} \gamma_r(t)\, \dd t \approx q^m-q^0 + \int_{t_n^0}^{t^m_n} F(q(t)) \,\dd t.
\end{equation}
We define $\theta^m_r:=\frac{1}{\dt}\int_{t_n^0}^{t^m_n} \gamma_r(t) \dd t$.
The equation $\mathcal{L}^2(\underline{q})=0$ 
is the Lobatto IIIA method with $M+1$ stages. Instead of solving it directly, we introduce a low-order operator that will be actually solved, while the high-order operator will be used just as a correction term:
\begin{equation}
	\mathcal{L}^{1,m}(\underline{q}) := q^m-q^0 + (t^m_n-t_n^0) F(q^0)\approx q^m-q^0 + \int_{t_n^0}^{t^m_n} F(q(t_n)) \dd t.
\end{equation}
We define $\beta^m := \frac{t^m_n-t^0_n}{\dt}$.
$\mathcal{L}^1(\underline{q})=0$ corresponds to a series of explicit Euler methods applied to different intervals.
Moreover, in the $\mathcal{L}^1$ we can introduce further simplifications such as mass lumping or removing the stabilization, to create a very simple scheme to solve.
The DeC iterative method is then defined by
\begin{equation}
	\label{eq:DeC}
	\begin{cases}
		\underline{q}^{(0)} :=(q(t_n),\dots, q(t_n)),\\
		\mathcal{L}^1(\underline{q}^{(k)})=\mathcal{L}^1(\underline{q}^{(k-1)})-\mathcal{L}^2(\underline{q}^{(k-1)}), \qquad k=1,\dots, \kappa,\\
		q_{n+1}:= q^{(\kappa)}(t^M_n).
	\end{cases}
\end{equation}
with $\kappa$ the number of iterations performed.
It can be shown that the order of accuracy of the method is the minimum between the order of accuracy of $\mathcal{L}^2$ and the number of iterations $\kappa$, under classical assumptions on the time-step size \cite{abgrall2017high,micalizzi2022new}. Hence, it will be natural to consider $\mathbb P^K$ spatial discretizations that lead to order $K+1$ in the unsteady regime. Knowing that $\mathcal L^2$ achieves order $2M$ with Gauss--Lobatto points, we will set it such that $2M\geq K+1$, so that at least order $K+1$ is achieved in the $\mathcal L^2$ operator. Then, the number of iterations are naturally set to $\kappa:=K+1$ to have, for the whole DeC, a space-time order $K+1$ at least.

The DeC allows to combine the spatial and time discretization into unique operators $\mathcal L^1$ and $\mathcal L^2$. Let us 
therefore endow also the PDE variables with a superscript denoting the sub-time step stages of the DeC.
As an example, the SU $\mathcal L^2$ operator for the $u$ equation reads
\begin{equation}
	\begin{split}
		\mathcal{L}^{2,m}_u (\underline{q}):=& (M_x\otimes M_y) (\mathrm u^m-\mathrm u^0) +\sum_{r=0}^M \theta^m_r \Delta t \left( (D_x\otimes M_y) \mathrm p^r -(M_x\otimes M_y)\mathrm S_u^r \right) \\
		&+ \alpha h (D^x\otimes M_y) (\mathrm p^m-\mathrm p^0) + \alpha h \sum_{r=0}^M \theta^m_r \Delta t \left( (D_x^x\otimes M_y) \mathrm u^r + (D^x\otimes D_y) \mathrm v^r-(D^x\otimes M_y) \mathrm  S_p^r  \right),
	\end{split}
\end{equation}
which, combined with the $v$ and $p$ equations create a coupled linear system of $3N_xN_yM$ equations.
Since in our case $M_x$ is diagonal (the quadrature rule used to compute the all matrices is given by the Gauss--Lobatto nodes that defining the Lagrangian basis function), this issue can be overcome by defining the $\mathcal L^1$ operator as
\begin{equation}
	\begin{split}
		\mathcal{L}^{1,m}_u (\underline{q}):=& (M_x\otimes M_y) (\mathrm u^m-\mathrm u^0) + \beta^m_r \Delta t \left( (D_x\otimes M_y) \mathrm p^0 -(M_x\otimes M_y)\mathrm S_u^0 \right),
	\end{split}
\end{equation}
which leads to an explicit scheme in the DeC formulation. This means that the complex SUPG structure of time derivative in the stabilization term can be simply solved within an explicit scheme. 
A detailed Fourier stability  analysis of DeC time stepping with    stabilized  finite element discretizations, including both the SU and OSS stabilization methods, 
is provided in \cite{michel2021spectral} for the  one-dimensional advection equation.  A two-dimensional extension on structured triangulations is presented in   \cite{michel2022spectral}.  
In this work, the time step is set to
$$
\Delta t = \text{CFL} \dfrac{h}{c}.
$$
Specific values of the CFL, and of other discretization parameters, are discussed in the next section.
The GF version follows a similar approach, where we change the spatial discretization operators, i.e.,
\begin{equation}
	\begin{split}
		\mathcal{L}^{2,m}_u& (\underline{q}):= (M_x\otimes M_y) (\mathrm u^m-\mathrm u^0) +\sum_{r=0}^M \theta^m_r \Delta t \left( (D_x\otimes M_y) \mathrm p^r -(D_xI_x\otimes M_y)\mathrm S_u^r \right) \\
		+ &\alpha h (D^x\otimes M_y) (\mathrm p^m-\mathrm p^0) + \alpha h \sum_{r=0}^M \theta^m_r \Delta t ( D_x^x\otimes D_y) \left( (\id_x\otimes I_y) \mathrm u^r + (I_y\otimes \id_y) \mathrm v^r-(I_x\otimes I_y) \mathrm  S_p^r  \right),
	\end{split}
\end{equation}
to be consistent with the GF ones, as well as in the $\mathcal L^1$ operators.

For the sake of brevity, we do not expand in detail all the equations for the SUPG method, nor for the OSS methods, but similar arguments can be applied: stabilization terms are present only in the $\mathcal L^2$ operators where time derivative are replaced by $q^m-q^0$ for the $m$-th equation; all the GF operators are inserted both in $\mathcal{L}^2$ and $\mathcal{L}^1$.

In the steady state regime,  the time discretization   plays no  role. In particular, all terms vanish when
the discrete data is in the kernel of the spatial discretization, so all the     super-convergence results apply independently of the time scheme.
More details can be found in \cite{brt25}.

%
%

\section{Implementation and numerical validation}\label{sec:numerics}
In this section, we validate the numerical method presented above with a series of tests with various source terms. We will start with Coriolis effects in Section~\ref{sec:vortex_coriolis}, then we will add a mass source in Section~\ref{sec:mass_source}, and in Section~\ref{sec:stommel_gyre_num} 
we will end with the Stommel Gyre test that includes  several sources at once. For each of them, we will perform different studies: steady state equilibrium study with convergence, characterization of the equilibrium and perturbation analysis.

Concerning the implementation, for all involved polynomials and quadrature rules, in space and time, we have used Gauss--Lobatto points that guarantee super-convergence in the steady state case. 
This choice, corresponding to the classical Spectral Element approach \cite{PATERA1984468,Kopriva2010}, also makes   the variational statement in strong form  \eqref{eq:centralGalerkin_bilinear}  equivalent to the  weak form, thanks to a discrete summation by parts property \cite{Kopriva2010}.  This also makes all the mass matrices diagonal.

In all the  tests, we  use $\text{CFL}=0.1$ for all polynomial degrees smaller or equal to 5 and $\text{CFL}=\frac{1}{2(2p+1)}$ for higher degrees. For the SU stabilization, we will use $\alpha = 0.05$ for polynomial degrees less or equal to 5, and $\alpha=0.02$ for higher degrees. For OSS, we will set $\alpha = 0.01$ for $\mathbb Q^1$ and $\mathbb Q^2$, while $\alpha = 0.04$ for higher polynomials in all tests.
These parameters turned out to be stable in all the performed simulations.



\subsection{Steady equilibria with  Coriolis force}\label{sec:vortex_coriolis}

We first study    the approximation of steady equilibria  for the system \eqref{eq:coriolis_steady} with the Coriolis term only.
In particular, let us  consider the vortex solution  \eqref{eq:coriolis_equilibrium}--\eqref{eq:coriolis_equilibrium_pressure}   on the square $\Omega = [0,1]^2$,
with a
Coriolis coefficient $c=0.2$ and $P_0 = 1$.
We choose $h(\rho):= 20 e^{-100 \rho^2}$ and thus $g(\rho):= \frac{1}{10} e^{-100 \rho^2}$. 
For these choices the vortex perturbation $h(\rho)$ reaches values below machine precision at the boundaries. 
Hence, one can work with  Neumann homogeneous boundary conditions on all variables.\\

\begin{table}
	\centering
	\caption{Vortex with Coriolis force:  $p$ and $u$ errors and convergence rates  with SU and OSS stabilization}\label{tab:coriolis_vortex_conv_all}
	\vspace{0.2cm}
	\foreach \n in {2,...,6}{
		\pgfmathtruncatemacro\result{\n-1}
		\centering 
			\begin{adjustbox}{max width=\textwidth}
			\begin{tabular}{|c||cc|cc||cc|cc|||cc|cc||cc|cc|}
				\hline
				&\multicolumn{4}{|c||}{SU $\mathbb Q^{\result}$}
				&\multicolumn{4}{|c|||}{SU-GF $\mathbb Q^{\result}$}
				&\multicolumn{4}{|c||}{OSS $\mathbb Q^{\result}$}
				&\multicolumn{4}{|c|}{OSS-GF $\mathbb Q^{\result}$}
				\\ \hline
				{\color{white}\tiny$N\!\!$}$N${\color{white}\tiny$\!\!N$} & err $u$ & err $p$ & O-$u$ & O-$p$& err $u$ & err $p$ & O-$u$ & O-$p$& err $u$ & err $p$ & O-$u$ & O-$p$& err $u$ & err $p$ & O-$u$ & O-$p$ \\
				\hline
				\input{figures/LinAc2D_coriolis_vortex/big_table_ord\n.tex}\\ \hline 
			\end{tabular}
		\end{adjustbox}\\[2pt]
	}
\end{table}

We start by investigating the grid   convergence of the schemes. To this end, we  initialize the discrete solution with 
the analytical steady state interpolated at grid points, and we run until
 time $T=1$.  We then compute the errors with respect to the exact nodal solution.  
The computations have been run with both the SU and  OSS stabilized schemes,  in standard and GF formulation. 
Polynomial  degrees $K\in\{1,\dots,5\}$ are considered, with meshes of decreasing size as the degree is increased
to avoid reaching machine accuracy already on the first meshes and to have comparable number of total degrees of freedom between different orders. 
We report in Table~\ref{tab:coriolis_vortex_conv_all}
the resulting errors, and convergence rates. By symmetry, and to save space, only the $u$ velocity component is reported.

The errors obtained with the GF formulations are systematically  lower than those obtained
with the standard scheme, even on the coarsest meshes. This initial error offset is variable and can go from a factor 2 to a full order of magnitude. 
As the meshes are refined, the results of both the SU stabilized method and the OSS confirm the theoretical $K+2$ super-convergence estimates for $K\ge 2$.
The error reductions observed when using the GF schemes are then between one and almost three orders of magnitude.
The standard schemes require one or two additional mesh refinements to match the  accuracy levels of the GF formulation. \\


%
%
\begin{figure}
	\centering
	\includegraphics[width=0.31\textwidth]{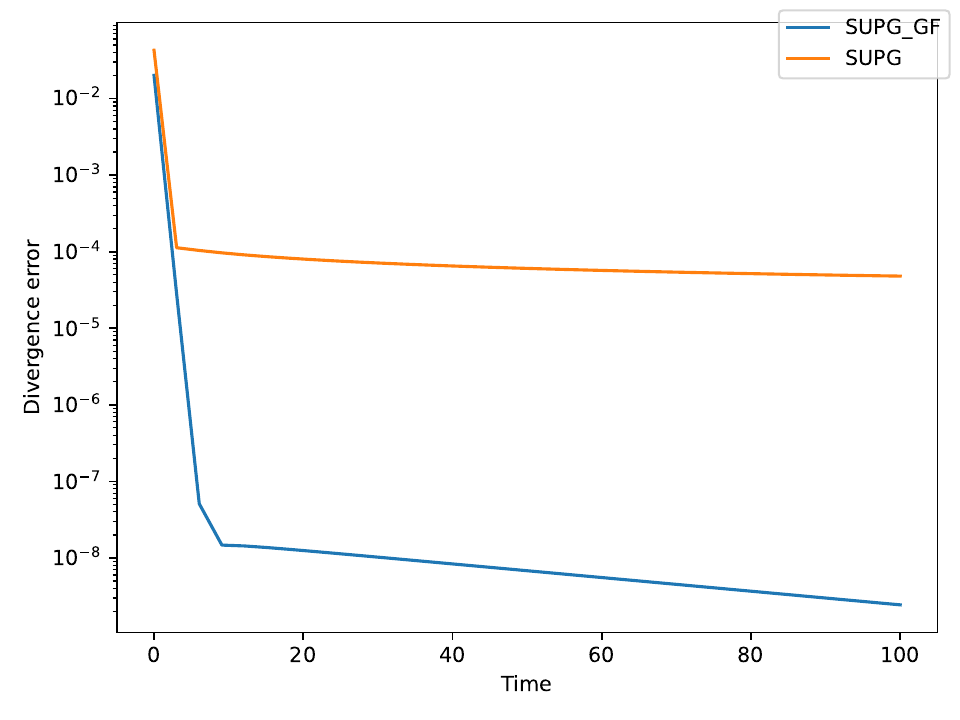}
	\includegraphics[width=0.31\textwidth]{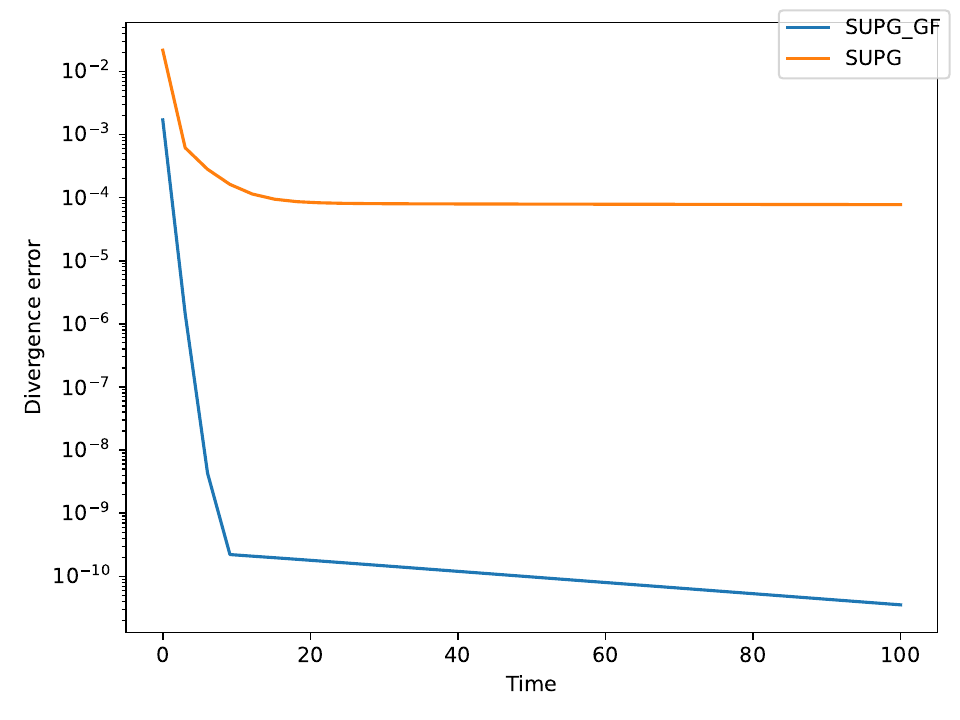}
	\includegraphics[width=0.31\textwidth]{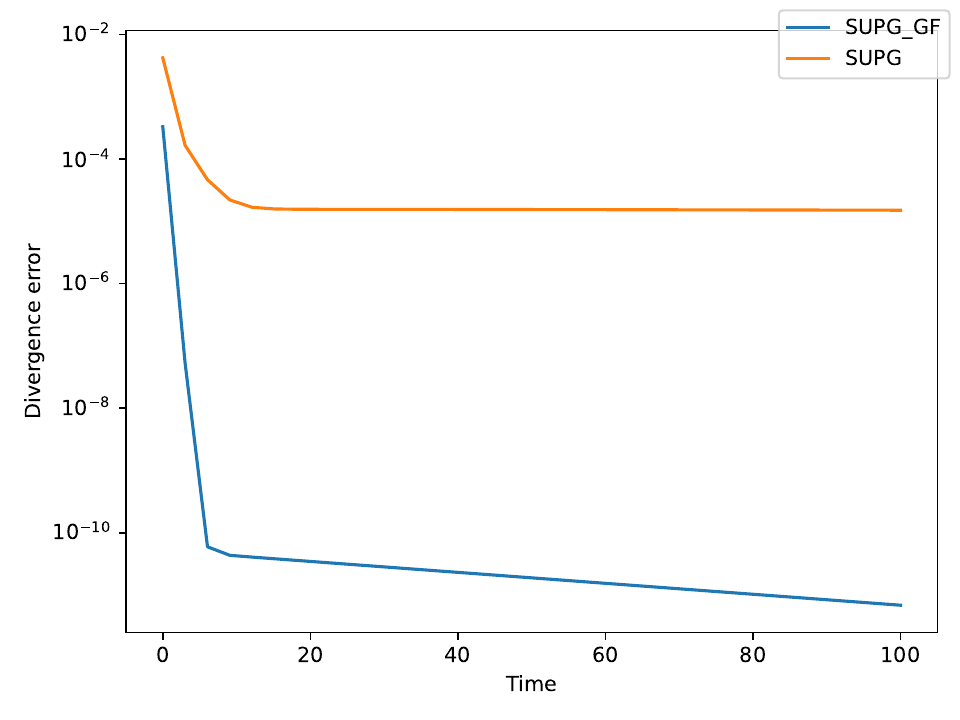}\\
	\includegraphics[width=0.31\textwidth]{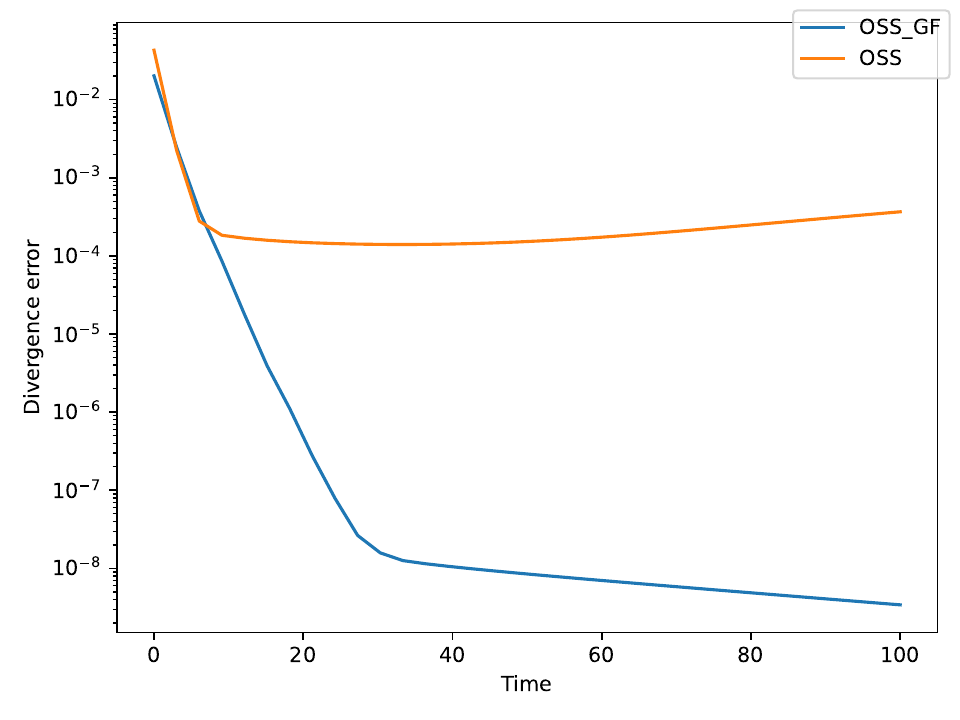}
	\includegraphics[width=0.31\textwidth]{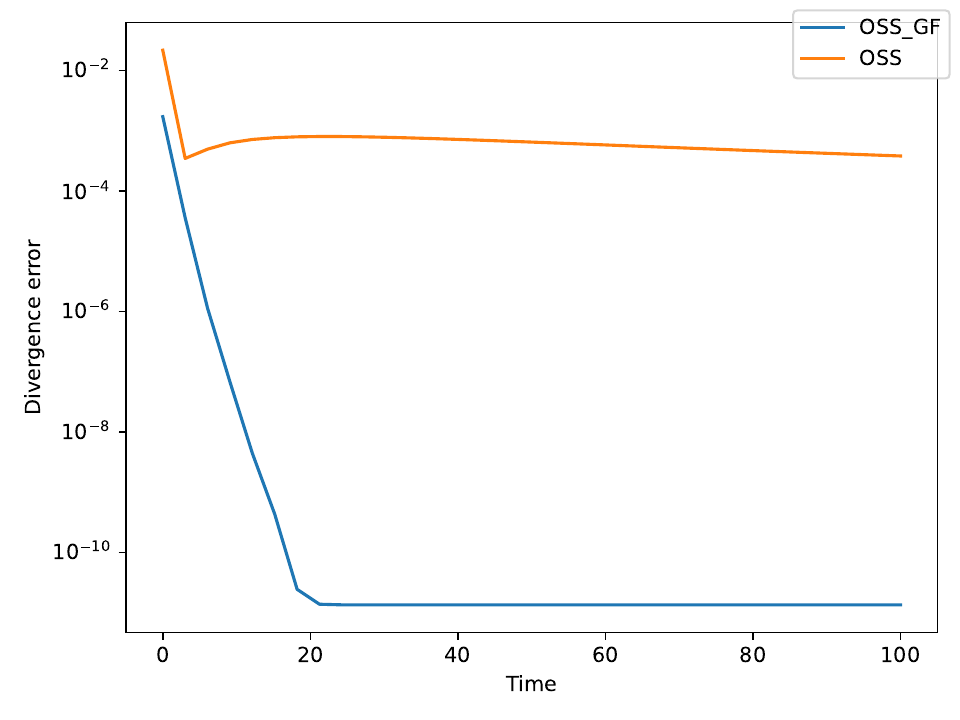}
	\includegraphics[width=0.31\textwidth]{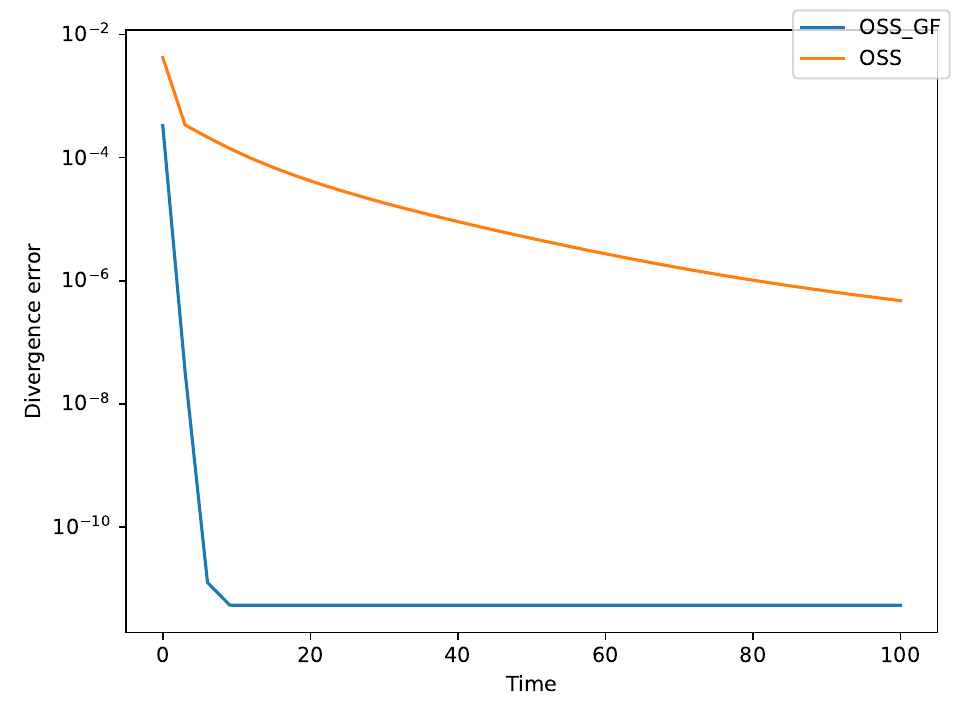}
	\caption{Vortex with Coriolis force: time evolution of the norm of discrete divergence (${D}_x \otimes  M_y \mathrm u + M_x \otimes {D}_y \mathrm v$ for classical methods and ${D}_x \otimes  D_yI_y \mathrm u + D_x I_x \otimes {D}_y \mathrm v$ for GF simulations). Left column: $\mathbb Q^1$ on   $40 \times 40$ mesh. Middle column:  ,  $\mathbb Q^2$ on $20\times 20$ mesh. Right column:   $\mathbb Q^3$ with $13\times 13$ mesh. Top: SU stabilization. Bottom: OSS stablization.  }\label{fig:disc_div_long_vortex_coriolis}
\end{figure}

We then consider   long-time simulations to study the convergence to the discrete steady state.
We measure the evolution of the discrete divergence operators relevant for each method: the usual Galerkin weak form
 ${D}_x \otimes M_y \mathrm u + M_x \otimes {D}_y \mathrm  v$ for the standard schemes, and  
 ${D}_x \otimes D_y I_y \mathrm u + D_xI_x \otimes {D}_y \mathrm  v$ for the GF methods, 
 as given in Propositions \ref{th:div_kernel} and \ref{th:div_kernel-phi}. 
The results are reported in Figure \ref{fig:disc_div_long_vortex_coriolis}. The SU-GF converges  extremely quickly  (roughly 10 time steps), 
leading to values of the divergence well below the errors reported in Table \ref{tab:coriolis_vortex_conv_all} on the corresponding meshes.
The standard SU scheme also seems to 
reach some stationary state, but the state is
not in the kernel of the Galerkin
weak form. We are unable to provide
any characterization of this state, but it displays visible artefacts and a departure from circular symmetry, see Figure~\ref{fig:simulation_long_coriolis}. 

For the OSS stabilization the situation is  very similar. Very low values of the discrete divergence are reached within few steps with the OSS-GF formulation,
while the standard scheme does not seem to reach anything close to the kernel of the Galerkin divergence.  We again are unable to 
characterize the long term solutions of the standard scheme in any way.\\

\begin{figure}
	\centering $\mathbb Q^1, N_x=N_y=40$\\[2mm]
	\begin{minipage}{0.29\textwidth}
	\centering SU standard $\lVert \mathbf v \rVert$\\
	\includegraphics[width=\textwidth,trim={0 0 70 0},clip]{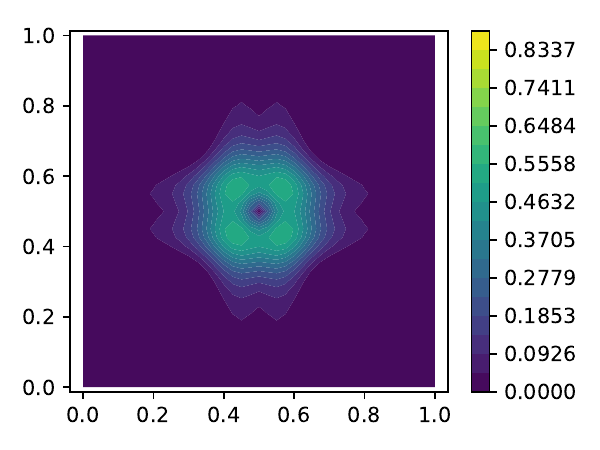}
\end{minipage}
	\begin{minipage}{0.29\textwidth}
	\centering SU--GF $\lVert \mathbf v \rVert$\\
	\includegraphics[width=\textwidth,trim={0 0 70 0},clip]{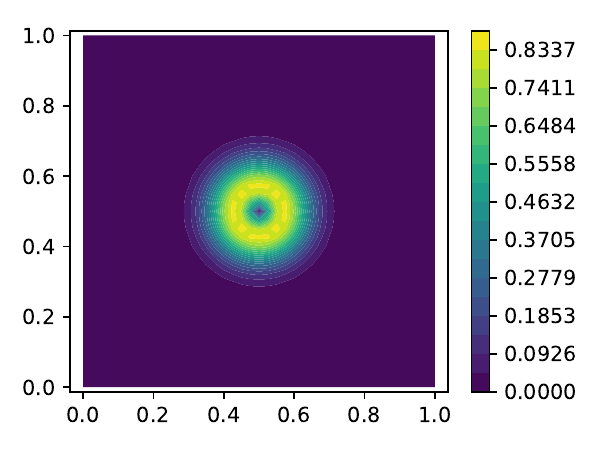}
\end{minipage}
	\begin{minipage}{0.29\textwidth}
	\centering exact $\lVert \mathbf v \rVert$\\
	\includegraphics[width=\textwidth,trim={0 0 70 0},clip]{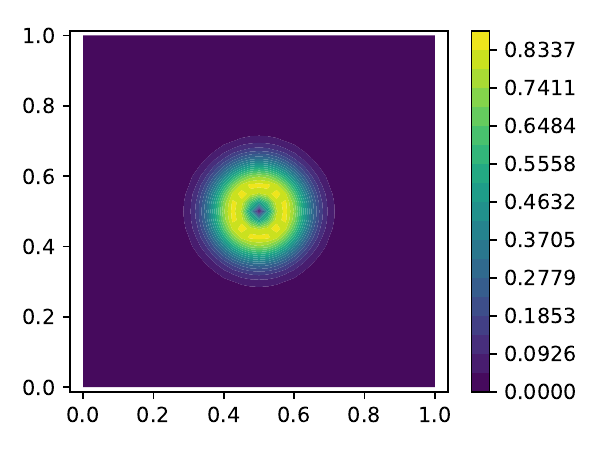}
\end{minipage}
\begin{minipage}{0.09\textwidth}
	\includegraphics[width=\textwidth,trim={220 20 0 0},clip]{figures/LinAc2D_coriolis_vortex_long/final_sol_unorm_exact_ord_2_N_0040.pdf}
\end{minipage}
\\	\begin{minipage}{0.29\textwidth}
	\centering OSS standard $\lVert \mathbf v \rVert$\\
	\includegraphics[width=\textwidth,trim={0 0 70 0},clip]{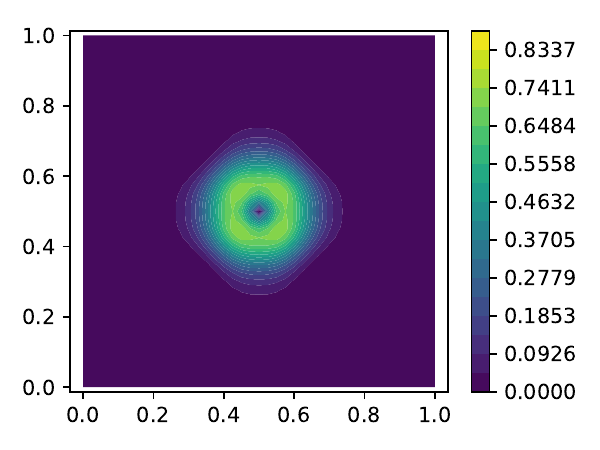}
\end{minipage}
	\begin{minipage}{0.29\textwidth}
	\centering OSS--GF $\lVert \mathbf v \rVert$\\
	\includegraphics[width=\textwidth,trim={0 0 70 0},clip]{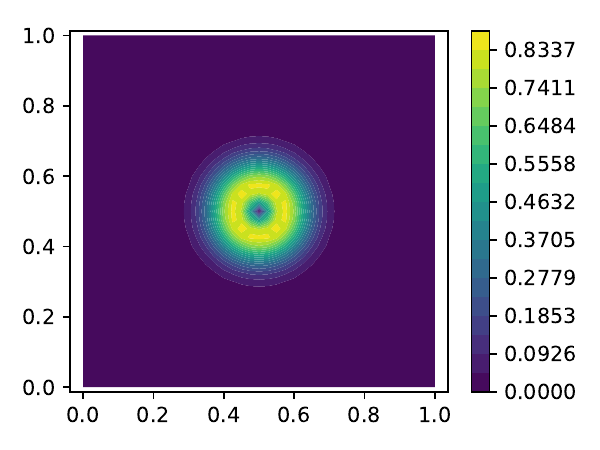}
\end{minipage}
	\begin{minipage}{0.29\textwidth}
	\centering exact $\lVert \mathbf v \rVert$\\
	\includegraphics[width=\textwidth,trim={0 0 70 0},clip]{figures/LinAc2D_coriolis_vortex_long/final_sol_unorm_exact_ord_2_N_0040.pdf}
\end{minipage}
\begin{minipage}{0.09\textwidth}
	\includegraphics[width=\textwidth,trim={220 20 0 0},clip]{figures/LinAc2D_coriolis_vortex_long/final_sol_unorm_exact_ord_2_N_0040.pdf}
\end{minipage}
	\centering $\mathbb Q^3, N_x=N_y=6$\\[2mm]
\begin{minipage}{0.29\textwidth}
	\centering SU standard $\lVert \mathbf v \rVert$\\
	\includegraphics[width=\textwidth,trim={0 0 70 0},clip]{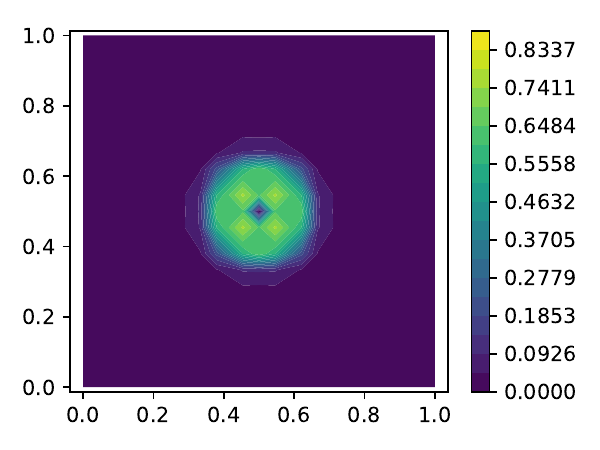}
\end{minipage}
\begin{minipage}{0.29\textwidth}
	\centering SU--GF $\lVert \mathbf v \rVert$\\
	\includegraphics[width=\textwidth,trim={0 0 70 0},clip]{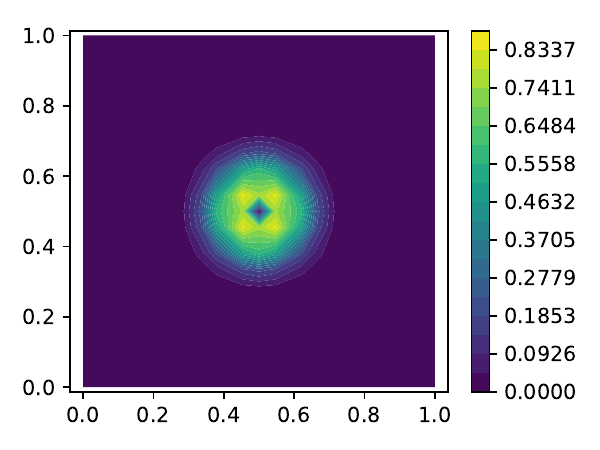}
\end{minipage}
\begin{minipage}{0.29\textwidth}
	\centering exact $\lVert \mathbf v \rVert$\\
	\includegraphics[width=\textwidth,trim={0 0 70 0},clip]{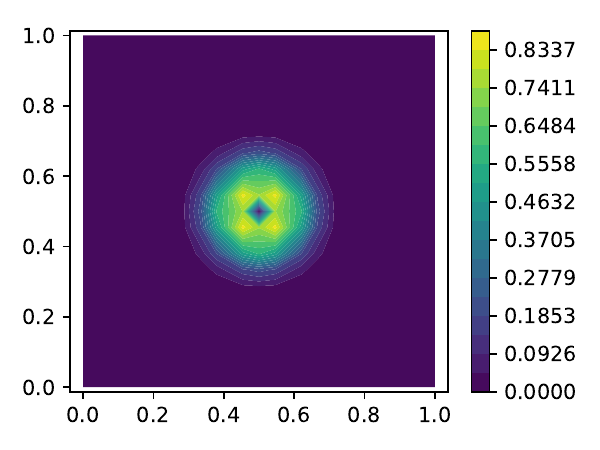}
\end{minipage}
\begin{minipage}{0.09\textwidth}
	\includegraphics[width=\textwidth,trim={220 20 0 0},clip]{figures/LinAc2D_coriolis_vortex_long/final_sol_unorm_exact_ord_4_N_0006.pdf}
\end{minipage}
\\	 \begin{minipage}{0.29\textwidth}
	\centering OSS standard $\lVert \mathbf v \rVert$\\
	\includegraphics[width=\textwidth,trim={0 0 70 0},clip]{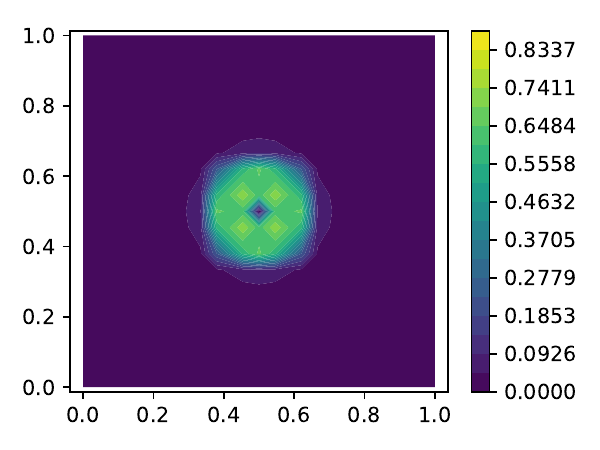}
\end{minipage}
\begin{minipage}{0.29\textwidth}
	\centering OSS--GF $\lVert \mathbf v \rVert$\\
	\includegraphics[width=\textwidth,trim={0 0 70 0},clip]{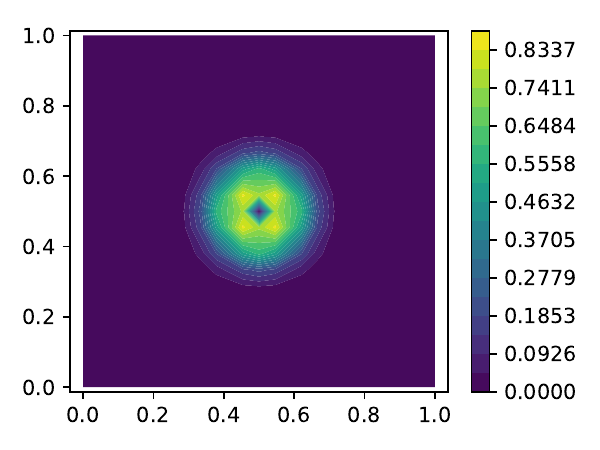}
\end{minipage}
\begin{minipage}{0.29\textwidth}
	\centering exact $\lVert \mathbf v \rVert$\\
	\includegraphics[width=\textwidth,trim={0 0 70 0},clip]{figures/LinAc2D_coriolis_vortex_long/final_sol_unorm_exact_ord_4_N_0006.pdf}
\end{minipage}
\begin{minipage}{0.09\textwidth}
	\includegraphics[width=\textwidth,trim={220 20 0 0},clip]{figures/LinAc2D_coriolis_vortex_long/final_sol_unorm_exact_ord_4_N_0006.pdf}
\end{minipage}
	\caption{Vortex with Coriolis force: simulations  at time $T=100$ with $\mathbb Q^1$ elements and $40\times 40$ cells (first and second row), and with $\mathbb Q^3$ with $6 \times 6$ cells (third and last row).}\label{fig:simulation_long_coriolis}
\end{figure}

To give a qualitative feeling of the difference in performance between the GF formulation and the standard one,  
in Figure~\ref{fig:simulation_long_coriolis}, we plot the velocity norm contours  at time $T=100$ for the standard scheme with SU stabilization and for the  multi-dimensional GF scheme, as well as for the OSS stabilization.
 We compare the   $\mathbb Q^1$  results on the  $N_x=N_y=40$  mesh (the second mesh in the grid convergence), and
the   $\mathbb Q^3$ results on the coarsest mesh with $N_x=N_y=6$ cells.  In the rightmost figure, we report for comparison  
the exact solution.
Clearly, even on these coarse meshes, the  results of the GF methods  are almost indistinguishable from the exact ones.
The standard schemes  break  the structure of the solution and dissipate it.
The longer the final time,
the stronger the effect of this dissipation.\\

\begin{figure}
	\centering
	\includegraphics[width=0.7\textwidth]{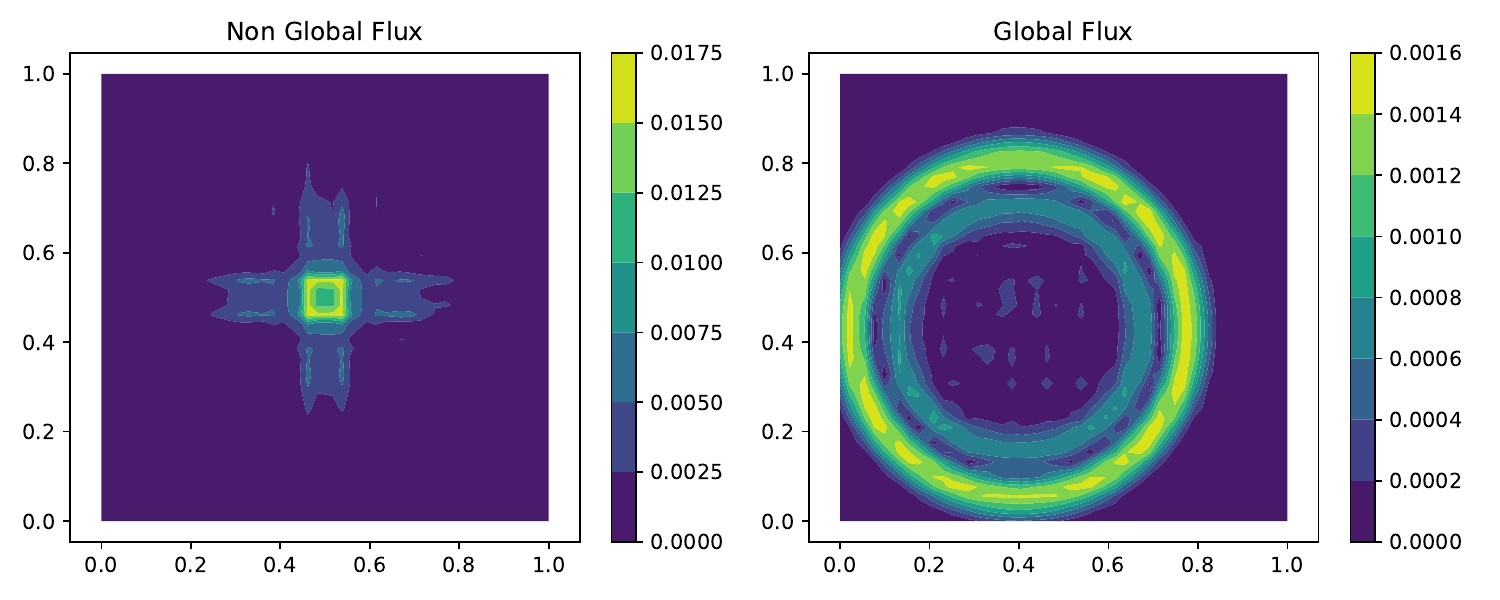}
	\includegraphics[width=0.7\textwidth]{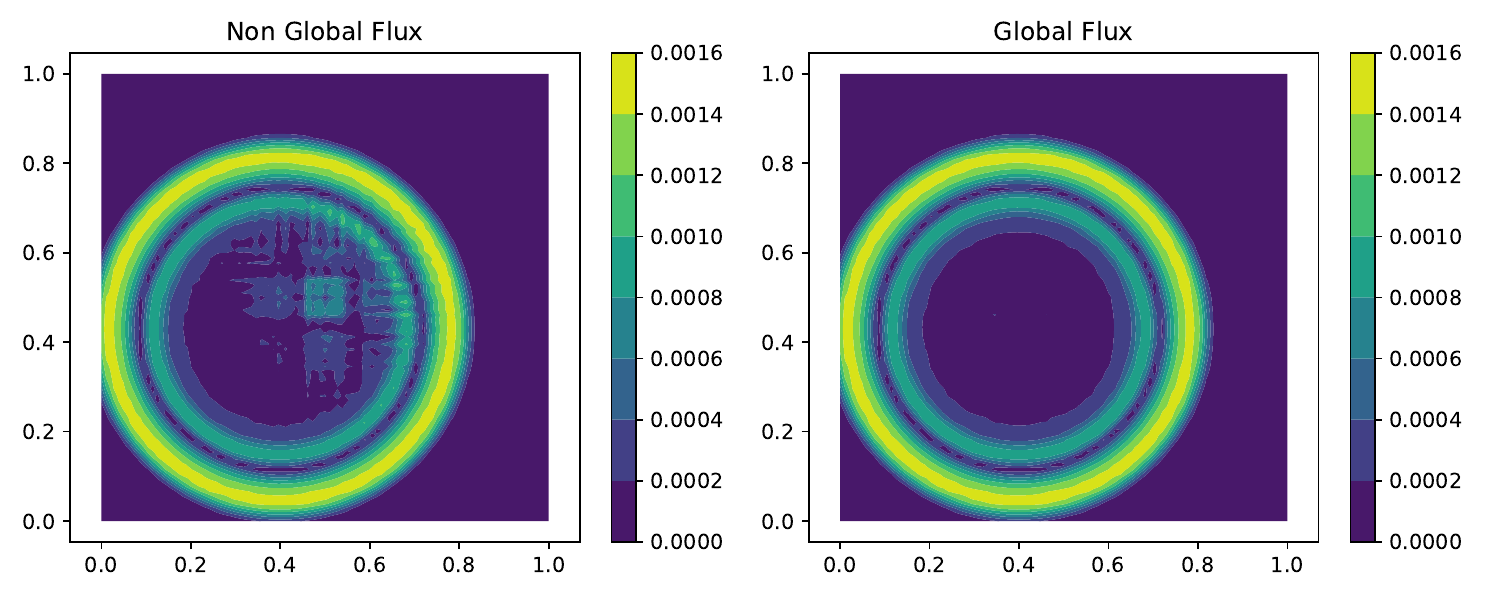}
	\caption{Vortex with Coriolis force: $\varepsilon=10^{-2}$ perturbation of the optimal equilibrium solution. Plot of $\lVert\vec{u}_{eq}-\vec{u}_p^{\text{SU}}\rVert$, with $\vec{u}_{eq}$ the optimized equilibrium \eqref{eq:coriolis_equilibrium}. 
	Top $\mathbb Q^3$ with 13 cells, bottom $\mathbb Q^3$ with 26 cells. All numerical results obtained using the SU method.}\label{fig:perturbation_coriolis_optimization}
\end{figure}
\begin{figure}
	\centering
	\includegraphics[width=0.7\textwidth]{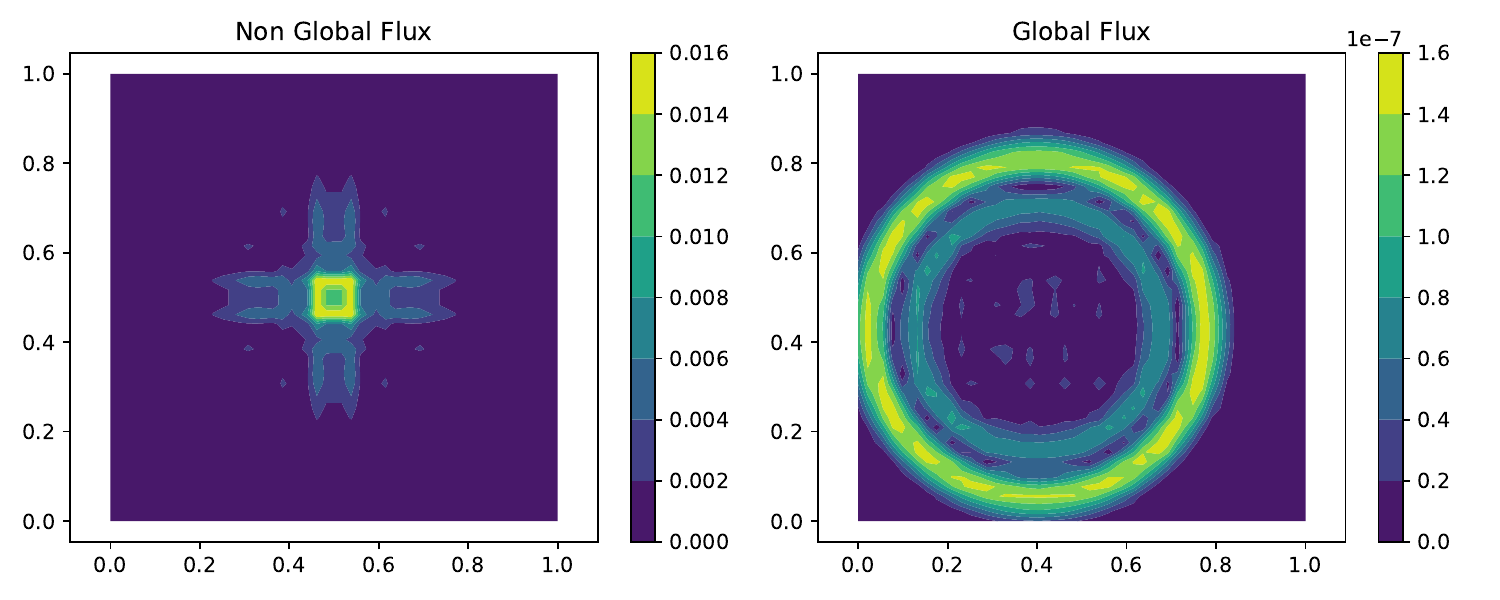}
	\includegraphics[width=0.7\textwidth]{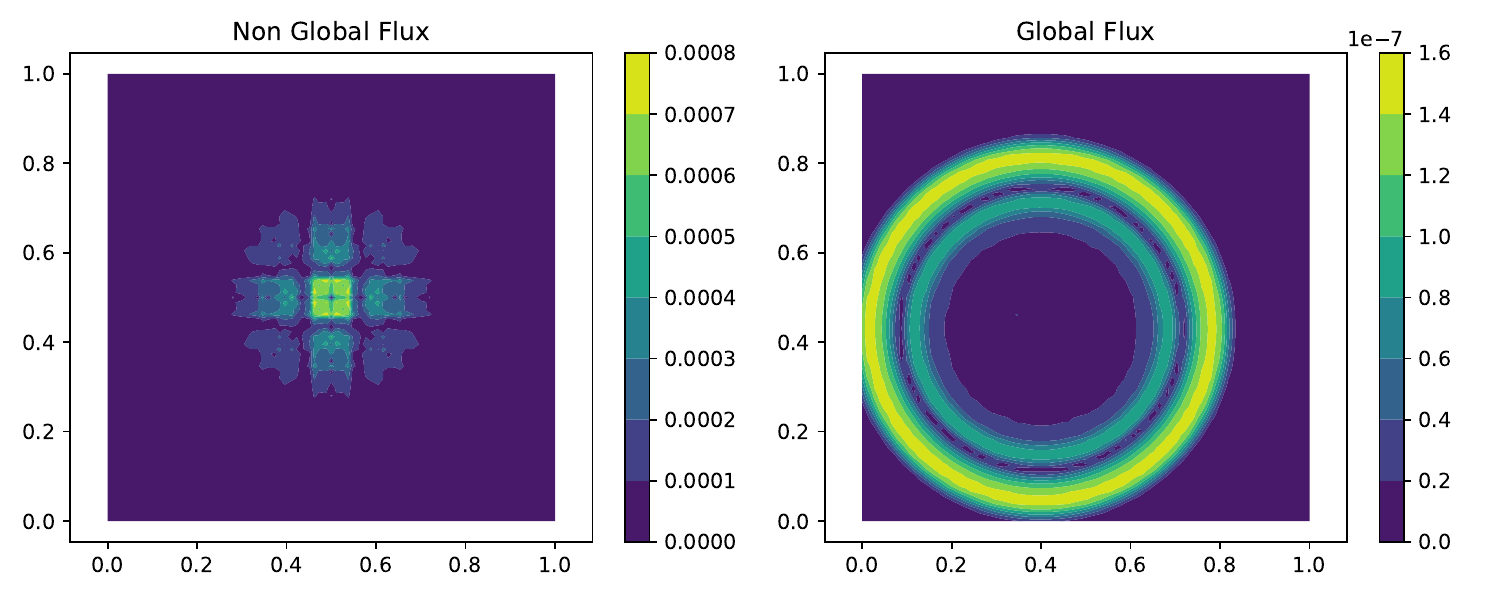}
	\caption{Vortex with Coriolis force: $\varepsilon=10^{-6}$ perturbation of the optimal equilibrium solution. Plot of $\lVert\vec{u}_{eq}-\vec{u}_p^{\text{SU}}\rVert$, with $\vec{u}_{eq}$ the optimized equilibrium \eqref{eq:coriolis_equilibrium}. 
	Top $\mathbb Q^3$ with 13 cells, bottom $\mathbb Q^3$ with 26 cells. All numerical results obtained using the SU method.}\label{fig:perturbation_coriolis_optimization_tiny}
\end{figure}

Finally, we perform a   perturbation analysis, which is classical in the study of stationarity preserving discretizations: The main reason for developing a stationarity preserving method is not to preserve just the steady state, but to ensure that physical perturbations of a steady state are not disturbed by spurious numerical perturbations originating from the steady state itself.
We perform this test on the vortex solution. We use the optimization process  described in section~\ref{sec:init_data} to get discrete well-prepared initial conditions,
and then add a local perturbation to the pressure of the form 
\begin{equation}\label{eq:pressure_perturbation}
	\delta_p({x}) = \varepsilon  e^{-\frac{1}{2(1-\rho({x})/r_0)^2} + \frac12},
\end{equation} 
where  $\vec{x}_p = (0.4,0.43)$, and $\rho({x}) = \|\vec{x} - \vec{x}_p\|$, with  $r_0=0.1$. 


Starting from this perturbed state, we evolve the solution until time   $T=0.35$.  We only discuss the results obtained with the SU methods, since those with OSS are  very similar.
In Figure~\ref{fig:perturbation_coriolis_optimization}, we test the perturbation with $\varepsilon=10^{-2}$, applied on top of the optimization--based well-prepared equilibrium solution for various schemes. We observe that SU-GF is able to accurately describe the motion of the perturbation with no spurious effects even on a very coarse $13\times 13$ mesh. The standard  SU scheme   on this mesh does not allow even to see the perturbation. Of course, mesh refinement reduces spurious artefacts and we can see a better result
on a $26\times 26$ discretization of the domain.

We then repeat the test with $\varepsilon=10^{-6}$. The results are reported on   Figure~\ref{fig:perturbation_coriolis_optimization_tiny}.
The  SU-GF  method allows again to obtain an excellent resolution already on the coarse  $13\times 13$ mesh.
For the standard method, in this case refining the mesh  to  $26\times 26$ is not sufficient to capture such a small feature.
Note that  these results are obtained with quadratic and cubic polynomial degrees.
This shows the advantage of combining high-order  schemes and stationarity preservation.
As one may expect, for very small values of the perturbation, the quality of the results still shows some dependence on the 
mesh size, and on the initialization strategy strategy. However, the
%
GF methods  show  systematically superior results.

\subsection{Mass source cases}\label{sec:mass_source}

We now study the case involving   a mass source as introduced in Section \ref{sec:mass_source_theory}. This is a very interesting case   where the truly multi-dimensional nature of the GF formulation is necessary
to balance the divergence with the source. 
We consider both the steady solution \eqref{eq:mass_steady} and the time dependent solution \eqref{eq:mass_moving}
as this allows to show the performance outside   equilibrium.

\subsubsection{Translating solution}

\begin{table}
	\centering
	\caption{Translating solution with mass source: $p$ and $u$ errors and convergence rates  with SU and OSS stabilization}\label{tab:translating_conv_all}
	\vspace{0.2cm}
	\foreach \n in {2,...,5}{
		\pgfmathtruncatemacro\result{\n-1}
		\centering 
		\begin{adjustbox}{max width=\textwidth}
			\begin{tabular}{|c||cc|cc||cc|cc|||cc|cc||cc|cc|}
				\hline
				&\multicolumn{4}{|c||}{SU $\mathbb Q^{\result}$}
				&\multicolumn{4}{|c|||}{SU-GF $\mathbb Q^{\result}$}
				&\multicolumn{4}{|c||}{OSS $\mathbb Q^{\result}$}
				&\multicolumn{4}{|c|}{OSS-GF $\mathbb Q^{\result}$}
				\\ \hline
				{\color{white}\tiny$N\!\!$}$N${\color{white}\tiny$\!\!N$} & err $u$ & err $p$ & O-$u$ & O-$p$& err $u$ & err $p$ & O-$u$ & O-$p$& err $u$ & err $p$ & O-$u$ & O-$p$& err $u$ & err $p$ & O-$u$ & O-$p$ \\
				\hline
				\input{figures/LinAc2D_moving_source/big_table_ord\n.tex}\\ \hline 
			\end{tabular}
		\end{adjustbox}\\[2pt]
	}
\end{table}

We start from  computing the convergence to the translating solution  \eqref{eq:mass_moving} to check that 
the GF methods maintain the classical order of accuracy in the non-stationary regime. 
We consider the particular case of \eqref{eq:mass_moving}  with   $h(x,y,t)\equiv 0$, 
with $p_0=1, \, b=0.001,\, \mathbf a =(-0.1,0.1)$, $g(x,y)=e^{-100((x-x_0)^2+(y-y_0)^2)}$, $x_0=0.65$, $y_0=0.39$. 
The final time of the simulation is  set to $T=0.1$. We compute  the solutions with the SU  and OSS stabilization  methods,
using both the GF and standard formulations.
%

The results for   polynomial degrees $K$ from 1 to 4 are reported in Table~\ref{tab:translating_conv_all}. To save space, only the $u$ velocity component is reported.
The convergence orders are  in between $K+\frac12$ and $K+1$  for the GF schemes, and in between  $K$ and $K+\frac12$  for the standard one.  
These confirm reasonably the expected $K+1$ accuracy of the methods.
Even if it is not a steady case, GF schemes  seem to perform  a little better, with   errors systematically below those of the standard method.

\subsubsection{Steady solution}
\begin{figure}
	\centering $\mathbb Q^1, N_x=N_y=20$\\[2mm]
	\begin{minipage}{0.29\textwidth}
		\centering SU $\lVert \mathbf v \rVert$\\
		\includegraphics[width=\textwidth,trim={0 0 70 0},clip]{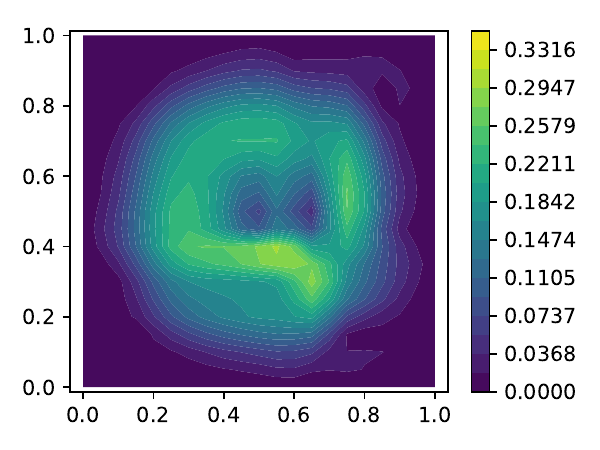}
	\end{minipage}
	\begin{minipage}{0.29\textwidth}
		\centering SU--GF $\lVert \mathbf v \rVert$\\
		\includegraphics[width=\textwidth,trim={0 0 70 0},clip]{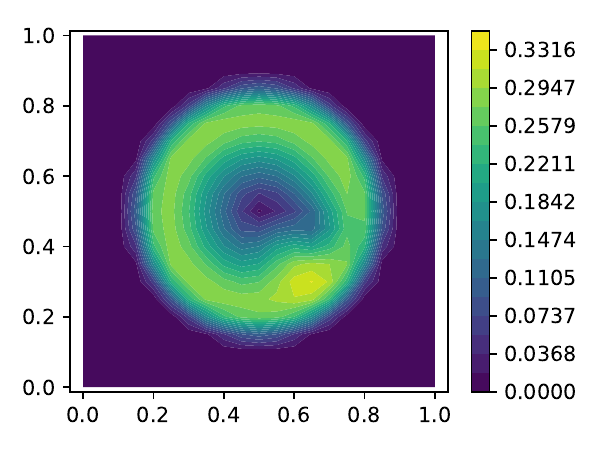}
	\end{minipage}
	\begin{minipage}{0.29\textwidth}
		\centering exact $\lVert \mathbf v \rVert$\\
		\includegraphics[width=\textwidth,trim={0 0 70 0},clip]{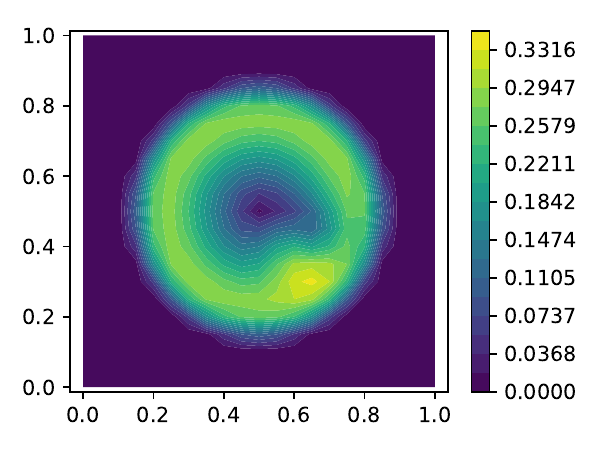}
	\end{minipage}
	\begin{minipage}{0.09\textwidth}
		\includegraphics[width=\textwidth,trim={220 20 0 0},clip]{figures/LinAc2D_source_vortex_long_dirichlet/final_sol_unorm_exact_ord_2_N_0020.pdf}
	\end{minipage}\\
	\begin{minipage}{0.29\textwidth}
		\centering OSS $\lVert \mathbf v \rVert$\\
		\includegraphics[width=\textwidth,trim={0 0 70 0},clip]{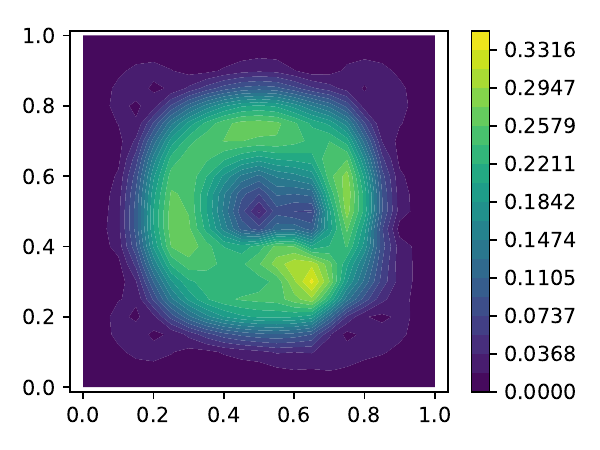}
	\end{minipage}
	\begin{minipage}{0.29\textwidth}
		\centering OSS--GF $\lVert \mathbf v \rVert$\\
		\includegraphics[width=\textwidth,trim={0 0 70 0},clip]{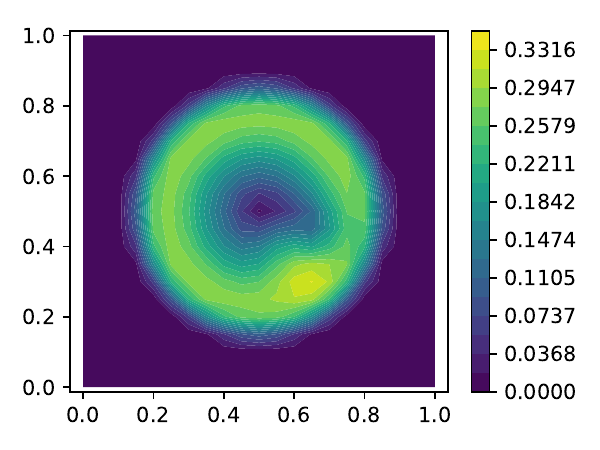}
	\end{minipage}
	\begin{minipage}{0.29\textwidth}
		\centering exact $\lVert \mathbf v \rVert$\\
		\includegraphics[width=\textwidth,trim={0 0 70 0},clip]{figures/LinAc2D_source_vortex_long_dirichlet/final_sol_unorm_exact_ord_2_N_0020.pdf}
	\end{minipage}
	\begin{minipage}{0.09\textwidth}
		\includegraphics[width=\textwidth,trim={220 20 0 0},clip]{figures/LinAc2D_source_vortex_long_dirichlet/final_sol_unorm_exact_ord_2_N_0020.pdf}
	\end{minipage}\\
	\centering $\mathbb Q^3, N_x=N_y=6$\\[2mm]
	\begin{minipage}{0.29\textwidth}
		\centering SU $\lVert \mathbf v \rVert$\\
		\includegraphics[width=\textwidth,trim={0 0 70 0},clip]{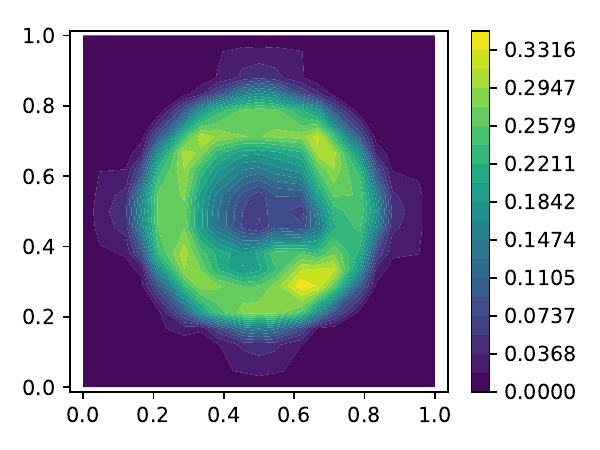}
	\end{minipage}
	\begin{minipage}{0.29\textwidth}
		\centering SU--GF $\lVert \mathbf v \rVert$\\
		\includegraphics[width=\textwidth,trim={0 0 70 0},clip]{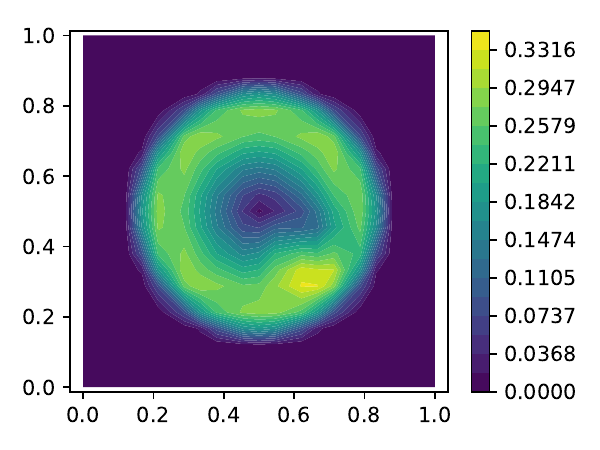}
	\end{minipage}
	\begin{minipage}{0.29\textwidth}
		\centering exact $\lVert \mathbf v \rVert$\\
		\includegraphics[width=\textwidth,trim={0 0 70 0},clip]{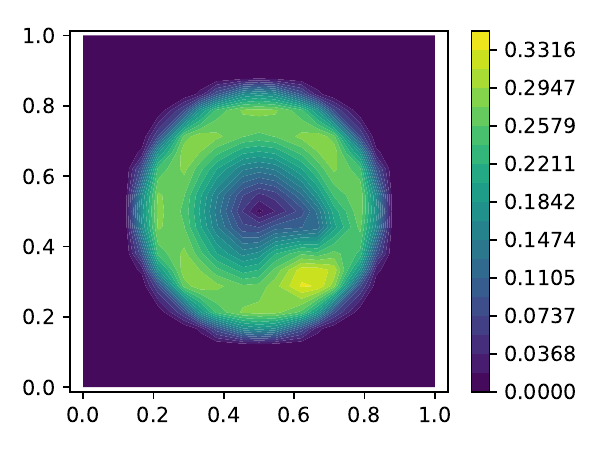}
	\end{minipage}
	\begin{minipage}{0.09\textwidth}
		\includegraphics[width=\textwidth,trim={220 20 0 0},clip]{figures/LinAc2D_source_vortex_long_dirichlet/final_sol_unorm_exact_ord_4_N_0006.pdf}
	\end{minipage}\\
	\begin{minipage}{0.29\textwidth}
		\centering OSS $\lVert \mathbf v \rVert$\\
		\includegraphics[width=\textwidth,trim={0 0 70 0},clip]{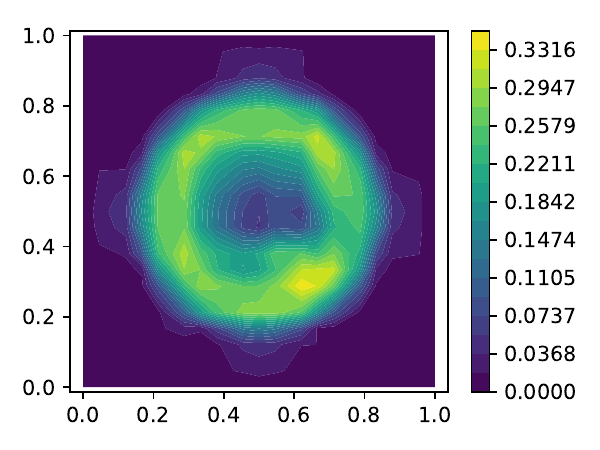}
	\end{minipage}
	\begin{minipage}{0.29\textwidth}
		\centering OSS--GF $\lVert \mathbf v \rVert$\\
		\includegraphics[width=\textwidth,trim={0 0 70 0},clip]{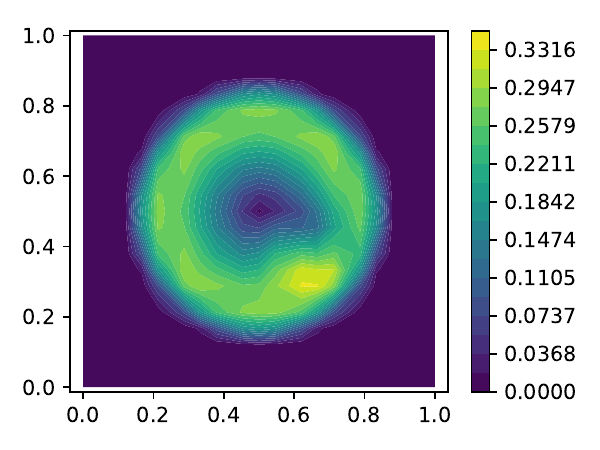}
	\end{minipage}
	\begin{minipage}{0.29\textwidth}
		\centering exact $\lVert \mathbf v \rVert$\\
		\includegraphics[width=\textwidth,trim={0 0 70 0},clip]{figures/LinAc2D_source_vortex_long_dirichlet/final_sol_unorm_exact_ord_4_N_0006.pdf}
	\end{minipage}
	\begin{minipage}{0.09\textwidth}
		\includegraphics[width=\textwidth,trim={220 20 0 0},clip]{figures/LinAc2D_source_vortex_long_dirichlet/final_sol_unorm_exact_ord_4_N_0006.pdf}
	\end{minipage}
	\caption{Vortex with mass source term: simulations at time $T=100$ with $\mathbb Q^1$ elements and $40\times 40$ cells (top) and with $\mathbb Q^3$ with $6 \times 6$ cells}\label{fig:simulation_long_source}
\end{figure}
We now consider \eqref{eq:mass_steady} with $h,g $ defined by 
$h(\rho)= 20 e^{-100 \rho^2}$, with $\rho$ the distance from the center of the domain,
and $g(x,y) := \frac{1}{100} e^{-100 \rho_1^2}$,  where  $\rho_1:=\|\vec{x} - \vec{x}_1\|$, with $\vec{x}_1 = (0.65, 0.39)^T$.
This solution defines a steady vortex with a scalar (mass/pressure) source added in $\vec{x}_1 $. 
We start by considering the solutions at time $T=100$ to see the qualitative improvement brought by the GF approach. 
As before, we plot the norm of the velocity, for both the SU and OSS stabilized methods in both standard and GF formulation.
Figure~\ref{fig:simulation_long_source} reports the contour plots for the $\mathbb Q^1$ case on the coarse $N_x=N_y=20$ mesh, and for the $\mathbb Q^3$ case on the $N_x=N_y=6$ one.
As before, already on these coarse meshes, the GF results are indistinguishable from the exact solution.
The standard schemes break the circular symmetric structure of the solution,  due to mismatch between the Galerkin
term and the numerical dissipation, which precludes the stationarity preservation property.

\begin{table}
	\centering
	\caption{Vortex with mass source: $p$ and $u$  errors and convergence rates   with SU and OSS stabilization}\label{tab:source_vortex_conv_all}
	\vspace{0.2cm}
	
	\foreach \n in {2,...,6}{
		\pgfmathtruncatemacro\result{\n-1}
		\centering 
		\begin{adjustbox}{max width=\textwidth}
			\begin{tabular}{|c||cc|cc||cc|cc|||cc|cc||cc|cc|}
				\hline
				&\multicolumn{4}{|c||}{SU $\mathbb Q^{\result}$}
				&\multicolumn{4}{|c|||}{SU-GF $\mathbb Q^{\result}$}
				&\multicolumn{4}{|c||}{OSS $\mathbb Q^{\result}$}
				&\multicolumn{4}{|c|}{OSS-GF $\mathbb Q^{\result}$}
				\\ \hline
				{\color{white}\tiny$N\!\!$}$N${\color{white}\tiny$\!\!N$} & err $u$ & err $p$ & O-$u$ & O-$p$& err $u$ & err $p$ & O-$u$ & O-$p$& err $u$ & err $p$ & O-$u$ & O-$p$& err $u$ & err $p$ & O-$u$ & O-$p$ \\
				\hline
				\input{figures/LinAc2D_source_vortex_dirichlet/big_table_ord\n.tex}\\ \hline 
			\end{tabular}
		\end{adjustbox}\\[2pt]
	}
\end{table}

Next, we consider a study of grid convergence. As before, we initialize the solution by  sampling the exact one at nodes, and then run
the schemes   until a final time  $T=1$.  Tables \ref{tab:source_vortex_conv_all} 
reports the errors and the convergence rates  for the SU and OSS stabilization methods, for polynomial degrees from 1 to 5. The error is measured in this case by comparing, at final time, the value
of the nodal approximation of the steady state residual. 
To save space, only the $u$ velocity component is reported.
We can see that the theoretical  $K+2$ super-convergence for the GF method is confirmed also for this case. As for the other cases,
the GF formulation gives systematically lower errors, starting with the coarsest meshes where the errors are about half. As the mesh
is refined, this error gain goes up to a factor 10 or 50 due to the super-convergence of the GF schemes. 

  For the GF case, we report  on figure \ref{fig:convergence_div_source} 
  the norm of $D_x\otimes D_yI_y \mathrm u + D_xI_x \otimes D_y \mathrm v -D_xI_x \otimes D_y I_y \mathrm S_p $,
   while for the standard case we plot the norm of $D_x\otimes M_y \mathrm u + M_x \otimes D_y \mathrm v -M_x \otimes M_y \mathrm S_p $.
   Boundaries are excluded in both computations. 
   We can observe that the standard residual converges with order $K$ at most, while the GF converges with order $K+1$ for all $K\geq 2$, 
   as a consequence of its super-convergence properties.  \\

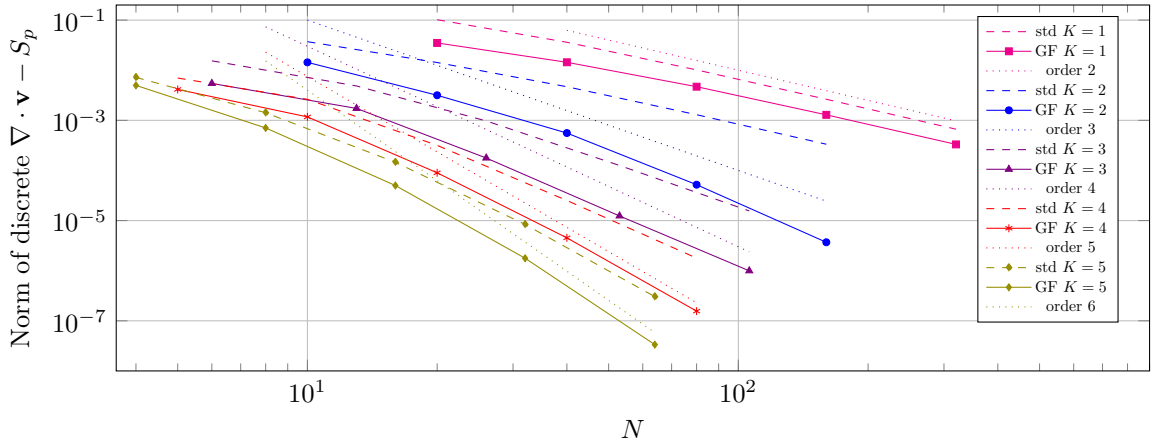
\begin{figure} 
	\centering
		\begin{tikzpicture}
			\begin{axis}[
				xmode=log, ymode=log,
				xmin=3.6,xmax=900,
				ymin=1.e-8,ymax=0.2,
				grid=major,
				xlabel={$N$},
				ylabel={Norm of discrete $\nabla \cdot \mathbf v -S_p$},
				xlabel shift = 1 pt,
				ylabel shift = 1 pt,
				legend pos= north east,
				legend style={nodes={scale=0.6, transform shape}},
				width=.95\textwidth,
				height=.4\textwidth
				]
				
				\addplot[dashed, magenta]             table [y=err_nbr, x=N, col sep=comma]{figures/LinAc2D_source_vortex_dirichlet/divergenceSimple_discretization_SUPG_error_ord2.csv};
				\addlegendentry{std $K=1$}
				\addplot[mark=square*,mark size=1.3pt,mark options={solid},magenta]  table [y=err_nbr, x=N, col sep=comma]{figures/LinAc2D_source_vortex_dirichlet/divergenceGF_discretization_SUPG_error_ord2.csv};
				\addlegendentry{GF $K=1$}
				\addplot[magenta,dotted,domain=40:320]{100./x/x};
				\addlegendentry{order 2}		
				
				\addplot[dashed, blue]             table [y=err_nbr, x=N, col sep=comma]{figures/LinAc2D_source_vortex_dirichlet/divergenceSimple_discretization_SUPG_error_ord3.csv};
				\addlegendentry{std $K=2$}
				\addplot[mark=otimes*,mark size=1.3pt,mark options={solid},blue]  table [y=err_nbr, x=N, col sep=comma]{figures/LinAc2D_source_vortex_dirichlet/divergenceGF_discretization_SUPG_error_ord3.csv};
				\addlegendentry{GF $K=2$}
				\addplot[blue,dotted,domain=10:160]{100./x/x/x};
				\addlegendentry{order 3}		
				
				\addplot[dashed, violet]             table [y=err_nbr, x=N, col sep=comma]{figures/LinAc2D_source_vortex_dirichlet/divergenceSimple_discretization_SUPG_error_ord4.csv};
				\addlegendentry{std $K=3$}
				\addplot[mark=triangle*,mark size=1.5pt,mark options={solid},violet]  table [y=err_nbr, x=N, col sep=comma]{figures/LinAc2D_source_vortex_dirichlet/divergenceGF_discretization_SUPG_error_ord4.csv};
				\addlegendentry{GF $K=3$}
				\addplot[violet,dotted,domain=8:106]{300./x/x/x/x};
				\addlegendentry{order 4}
				
				\addplot[dashed,red]          table [y=err_nbr, x=N, col sep=comma]{figures/LinAc2D_source_vortex_dirichlet/divergenceSimple_discretization_SUPG_error_ord5.csv};
				\addlegendentry{std $K=4$}
				\addplot[mark=asterisk,mark size=1.5pt,mark options={solid},red]  table [y=err_nbr, x=N, col sep=comma]{figures/LinAc2D_source_vortex_dirichlet/divergenceGF_discretization_SUPG_error_ord5.csv};
				\addlegendentry{GF $K=4$}
				\addplot[red,dotted,domain=8:80]{750./x/x/x/x/x};
				\addlegendentry{order 5}	
				
				\addplot[mark=diamond*,mark size=1.5pt,dashed,olive]             table [y=err_nbr, x=N, col sep=comma]{figures/LinAc2D_source_vortex_dirichlet/divergenceSimple_discretization_SUPG_error_ord6.csv};
				\addlegendentry{std $K=5$}
				\addplot[mark=diamond*,mark size=1.3pt,olive]  table [y=err_nbr, x=N, col sep=comma]{figures/LinAc2D_source_vortex_dirichlet/divergenceGF_discretization_SUPG_error_ord6.csv};
				\addlegendentry{GF $K=5$}
				\addplot[olive,dotted,domain=8:64]{4000./x/x/x/x/x/x};
				\addlegendentry{order 6}		
			\end{axis}
	\end{tikzpicture}
	\caption{Steady vortex with mass source: convergence with respect to the number of elements in $x$  of $L^2$ norm of $\nabla\cdot\vec v-S_p$ of the analytical solution discretized in standard (std) way or with the GF technique.}\label{fig:convergence_div_source}
\end{figure}

\begin{figure}
	\centering
	\includegraphics[width=0.325\textwidth,trim={0 0 367 0}, clip]{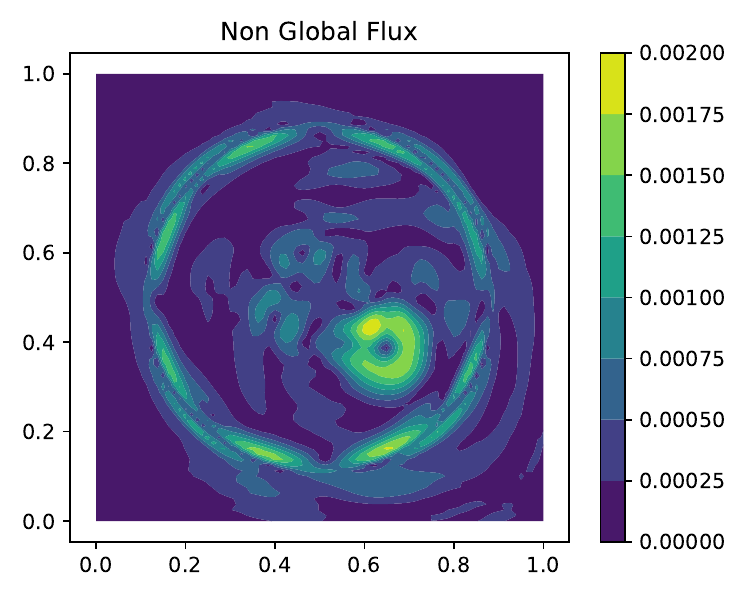}
	\includegraphics[width=0.325\textwidth,trim={0 0 367 0}, clip]{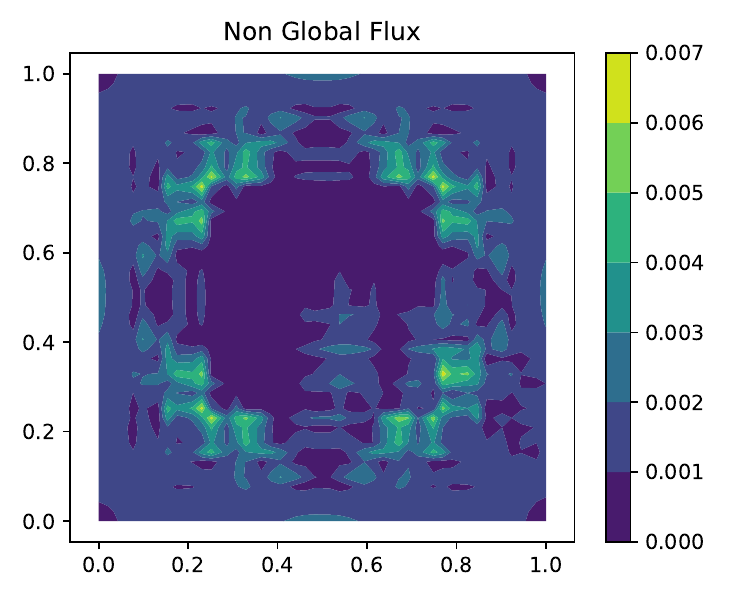}
	\includegraphics[width=0.325\textwidth,trim={0 0 367 0}, clip]{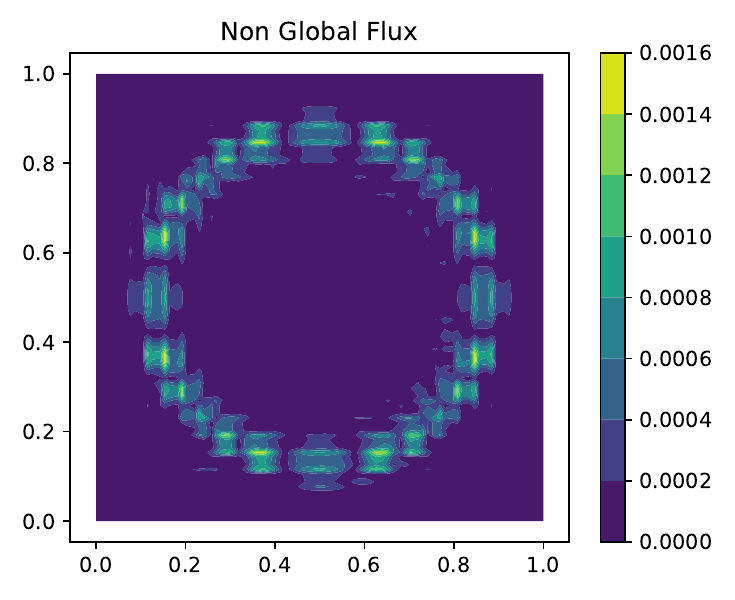}\\
	\includegraphics[width=0.325\textwidth,trim={352 0 15 0}, clip]{figures/LinAc2D_source_vortex_opt_perturbation/diff_vel_norm_ord2_N0080_pert_1.000e-03.pdf}
	\includegraphics[width=0.325\textwidth,trim={352 0 15 0}, clip]{figures/LinAc2D_source_vortex_opt_perturbation/diff_vel_norm_ord4_N0013_pert_1.000e-03.pdf}
	\includegraphics[width=0.325\textwidth,trim={352 0 15 0}, clip]{figures/LinAc2D_source_vortex_opt_perturbation/diff_vel_norm_ord4_N0026_pert_1.000e-03.pdf}
	\caption{Vortex with mass source term:  $\epsilon=10^{-3}$ pressure perturbation of the  optimal equilibrium solution, see Section~\ref{sec:init_data}. 
	  Plot of the $\|    \vec v_{eq} - \vec v_p^{\text{OSS}}   \|$, with $\vec{v}_{eq}$  the   equilibrium velocity. Left $\mathbb Q^1$ with $80 \times 80$ cells, center $\mathbb Q^3$ with 13 cells, right $\mathbb Q^3$ with 26 cells. Numerical solutions obtained with the OSS (top) and OSS-GF (bottom) methods.}\label{fig:perturbation_optimal_source}
\end{figure}

Finally, we consider the evolution of a small perturbation of the steady state. 
Equilibrium initial data is prepared using   the optimization process of Section~\ref{sec:init_data}.
A pressure perturbation of the form  \eqref{eq:pressure_perturbation} with $\varepsilon=10^{-3}$  is then added.
In this case, we report in  Figure~\ref{fig:perturbation_optimal_source} the results of the OSS method; the ones obtained with the SU schemes are very similar.
The plots show the OSS solutions  
 for $K=1$  on a relatively fine $80 \times 80$ mesh,
and   for $K=3$  on coarse  $13 \times 13$ and  $26 \times 26$ meshes.
The OSS-GF method shows huge superiority with respect to the standard formulation both in the high and in the low order case.
In the standard case, in particular, we can clearly see that  the source term is perturbing the solution in the right bottom quadrant of the domain, while other 
errors spread   around the exact solution profile. 
In this case, even refining the mesh to $N_x=N_y=26$ does not allow to capture the perturbation with the standard scheme.

\subsection{Stommel Gyre test case}\label{sec:stommel_gyre_num}

\begin{figure}
	\begin{minipage}{.5\textwidth}
		\centering
		\includegraphics[width=0.85\textwidth]{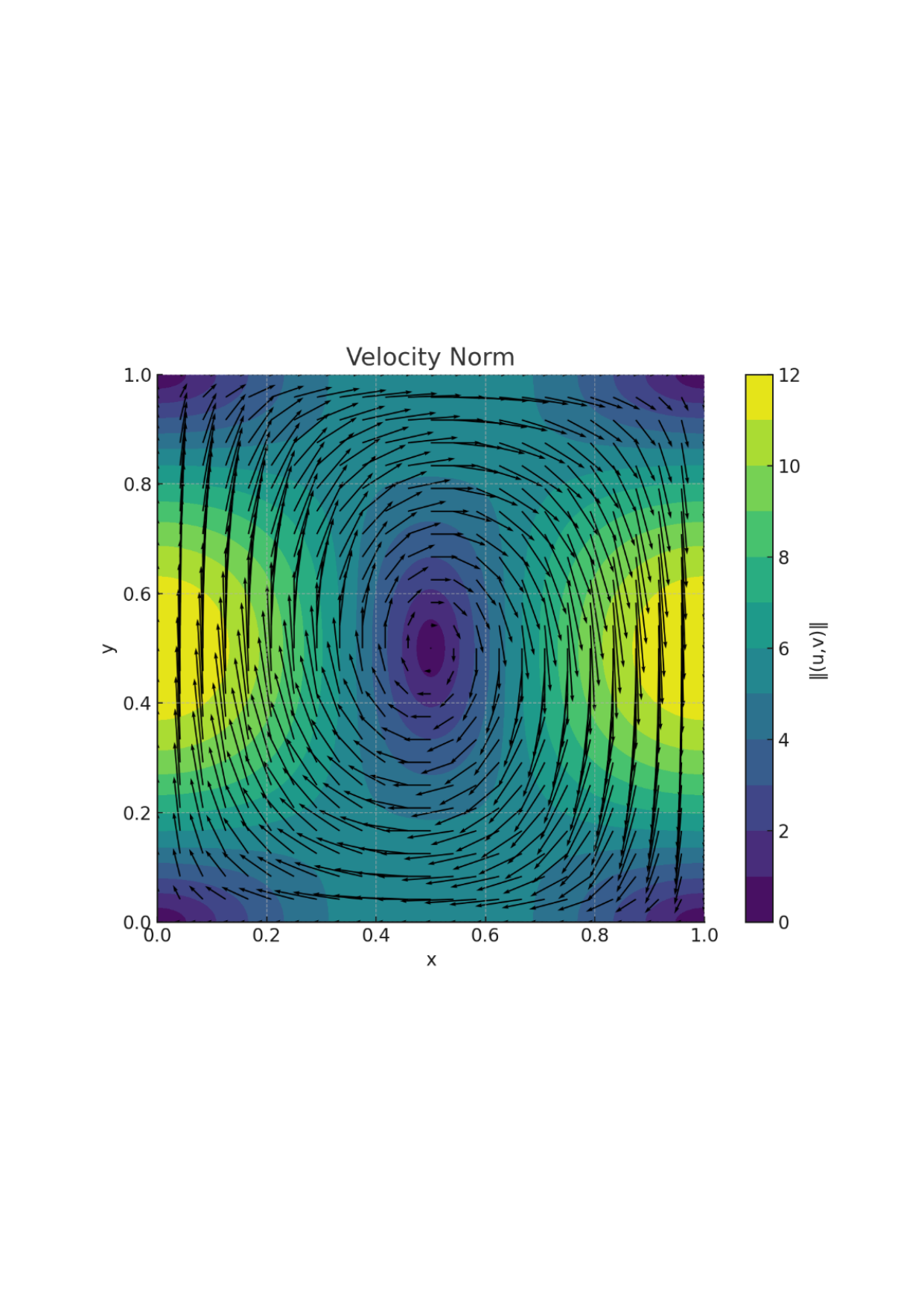}
	\end{minipage}\hfill
	\begin{minipage}{.5\textwidth}
		\centering
		\includegraphics[trim={720 0 0 20},clip,width=0.85\textwidth]{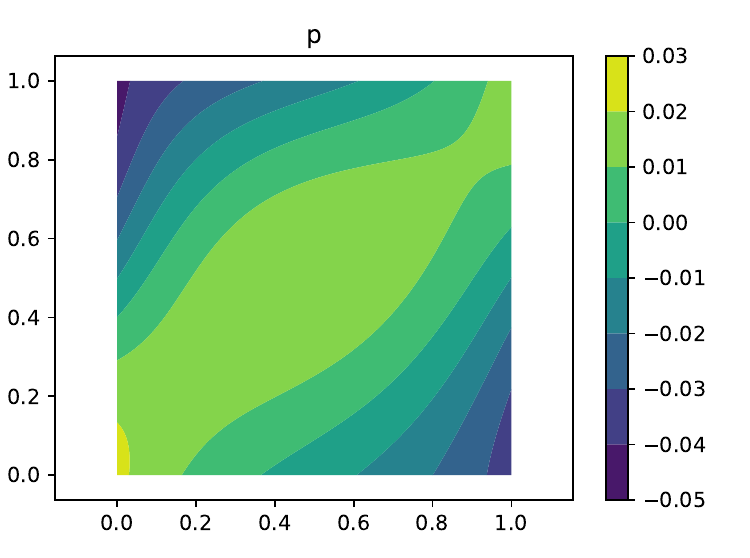}
	\end{minipage}
	\vspace{-2.5cm} 
	\caption{Stommel Gyre solution for $\lambda = b=1$, $f_0=\phi_0=R=0.01$, $F=0.1$, left velocity norm contour and velocity vector field, right pressure contour levels}\label{fig:simulation_SG_P6}
\end{figure}
We now consider the Stommel Gyre  test case described in Section \ref{sec:stommel_gyre}.
This  problem involves several  sources  of different forms.
The contours of   the  exact velocity  and pressure are reported for completeness  in Figure~\ref{fig:simulation_SG_P6}.
The precise test description is provided in Section~\ref{sec:stommel_gyre}.
In the simulations, we have set the solution parameters to  $\lambda = b=1$ (domain), $c_1=c_0=f=0.01$ (friction), $F=0.1$ (wind force).
Note that this is a non-compact case,  for which the Dirichlet  conditions allow a reasonable prediction already with the non-stationary preserving methods.

%

\begin{table}[htbp]
	\centering
	\caption{Stommel Gyre test convergence results with SU stabilization for $u$, $v$ and $p$}\label{tab:SG_conv}
	\vspace{0.1cm}
	
	\foreach \n in {2,...,6}{
		\pgfmathtruncatemacro\result{\n-1}
  		 \centering 
		\begin{adjustbox}{max width=0.45\textwidth}
			\begin{tabular}{|c|ccc|ccc|}
				\hline
				\multicolumn{7}{|c|}{SU $\mathbb Q^{\result} $}\\
				\hline
				{\tiny\color{white}N}$N${\tiny\color{white}N} & err $u$ & err $v$ & err $p$ & ord $u$ & ord $v$ & ord $p$ \\
				\hline
				\input{figures/LinAc2D_SG/errors_SUPG_ord\n.tex}\\ \hline
			\end{tabular}
		\end{adjustbox}
		\begin{adjustbox}{max width=0.45\textwidth}
			\begin{tabular}{|c|ccc|ccc|}
				\hline
				\multicolumn{7}{|c|}{SU-GF $\mathbb Q^{\result} $}\\
				\hline
				{\tiny\color{white}N}$N${\tiny\color{white}N} & err $u$ & err $v$ & err $p$ & ord $u$ & ord $v$ & ord $p$ \\
				\hline
				\input{figures/LinAc2D_SG/errors_SUPG_GF_ord\n.tex}\\ \hline
			\end{tabular}
		\end{adjustbox}
									~\\[2pt]
	}
\end{table}

We start by  studying the error with respect to exact solution for various mesh sizes. As done in the previous cases,
we start  from the   exact nodal values, and  run simulations until the final time   $T=1$. Table~\ref{tab:SG_conv} shows the 
errors and  convergence orders 
obtained with the SU stabilized schemes with polynomial degrees $K$ from 1 to 5.
Once again, we observe the super-convergence predicted by the analysis of Section~\ref{sec:consistency_analysis}, with dramatic error reductions
for $K \ge 3$.  The super-convergence is also visualized on Figure~\ref{fig:convergence_div_SG} in terms of  convergence of the divergence operator. As before,
we compute the norm of   $D_x\otimes D_yI_y \mathrm u + D_xI_x \otimes D_y \mathrm v$ for the GF method,
and of $D_x\otimes M_y \mathrm u + M_x \otimes D_y \mathrm v$ for 
the standard case. Domain boundaries are excluded in both computations. 
We find
$K+1$ accurate approximations of the divergence for polynomials of degree $K$, as well as reductions in the error
on this operator up to several orders of magnitude for $K\ge 2$.\\

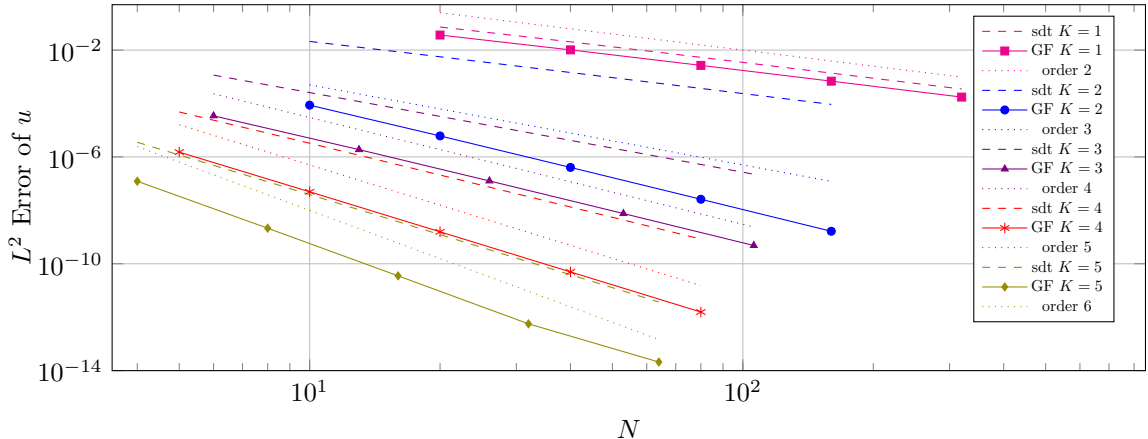
\begin{figure} 
	\centering
		\begin{tikzpicture}
			\begin{axis}[
				xmode=log, ymode=log,
				xmin=3.5,xmax=850,
				ymin=1.e-14,ymax=0.5,
				grid=major,
				xlabel={$N$},
				ylabel={$L^2$ Error of $u$},
				xlabel shift = 1 pt,
				ylabel shift = 1 pt,
				legend pos= north east,
				legend style={nodes={scale=0.6, transform shape}},
				width=.95\textwidth,
				height=.4\textwidth
				]
				
				\addplot[dashed, magenta]             table [y=err_nbr, x=N, col sep=comma]{figures/LinAc2D_SG/divergenceSimple_discretization_SUPG_error_ord2.csv};
				\addlegendentry{sdt $K=1$}
				\addplot[mark=square*,mark size=1.5pt,mark options={solid},magenta]  table [y=err_nbr, x=N, col sep=comma]{figures/LinAc2D_SG/divergenceGF_discretization_SUPG_error_ord2.csv};
				\addlegendentry{GF $K=1$}
				\addplot[magenta,dotted,domain=20:320]{100./x/x};
				\addlegendentry{order 2}		
				
				\addplot[dashed, blue]             table [y=err_nbr, x=N, col sep=comma]{figures/LinAc2D_SG/divergenceSimple_discretization_SUPG_error_ord3.csv};
				\addlegendentry{sdt $K=2$}
				\addplot[mark=otimes*,mark size=1.5pt,mark options={solid},blue]  table [y=err_nbr, x=N, col sep=comma]{figures/LinAc2D_SG/divergenceGF_discretization_SUPG_error_ord3.csv};
				\addlegendentry{GF $K=2$}
				\addplot[blue,dotted,domain=10:160]{0.5/x/x/x};
				\addlegendentry{order 3}		
				
				\addplot[dashed, violet]             table [y=err_nbr, x=N, col sep=comma]{figures/LinAc2D_SG/divergenceSimple_discretization_SUPG_error_ord4.csv};
				\addlegendentry{sdt $K=3$}
				\addplot[mark=triangle*,mark size=1.5pt,mark options={solid},violet]  table [y=err_nbr, x=N, col sep=comma]{figures/LinAc2D_SG/divergenceGF_discretization_SUPG_error_ord4.csv};
				\addlegendentry{GF $K=3$}
				\addplot[violet,dotted,domain=6:106]{0.3/x/x/x/x};
				\addlegendentry{order 4}			
				
				\addplot[dashed, red]             table [y=err_nbr, x=N, col sep=comma]{figures/LinAc2D_SG/divergenceSimple_discretization_SUPG_error_ord5.csv};
				\addlegendentry{sdt $K=4$}
				\addplot[mark=asterisk,mark size=2pt,mark options={solid},red]  table [y=err_nbr, x=N, col sep=comma]{figures/LinAc2D_SG/divergenceGF_discretization_SUPG_error_ord5.csv};
				\addlegendentry{GF $K=4$}
				\addplot[red,dotted,domain=5:80]{0.05/x/x/x/x/x};
				\addlegendentry{order 5}	
				
				\addplot[dashed,olive]             table [y=err_nbr, x=N, col sep=comma]{figures/LinAc2D_SG/divergenceSimple_discretization_SUPG_error_ord6.csv};
				\addlegendentry{sdt $K=5$}
				\addplot[mark=diamond*,mark size=1.5pt,mark options={solid},olive]  table [y=err_nbr, x=N, col sep=comma]{figures/LinAc2D_SG/divergenceGF_discretization_SUPG_error_ord6.csv};
				\addlegendentry{GF $K=5$}
				\addplot[olive,dotted,domain=4:64]{0.01/x/x/x/x/x/x};
				\addlegendentry{order 6}				
%
			\end{axis}
	\end{tikzpicture}
	\caption{Stommel-Gyre: convergence of $L^2$ error in $\nabla\cdot\vec u$ with respect to the number of elements in $x$ .}\label{fig:convergence_div_SG}
\end{figure}

\begin{figure}
	\centering
	
	\includegraphics[width=0.325\textwidth,trim={0 0 367 0}, clip]{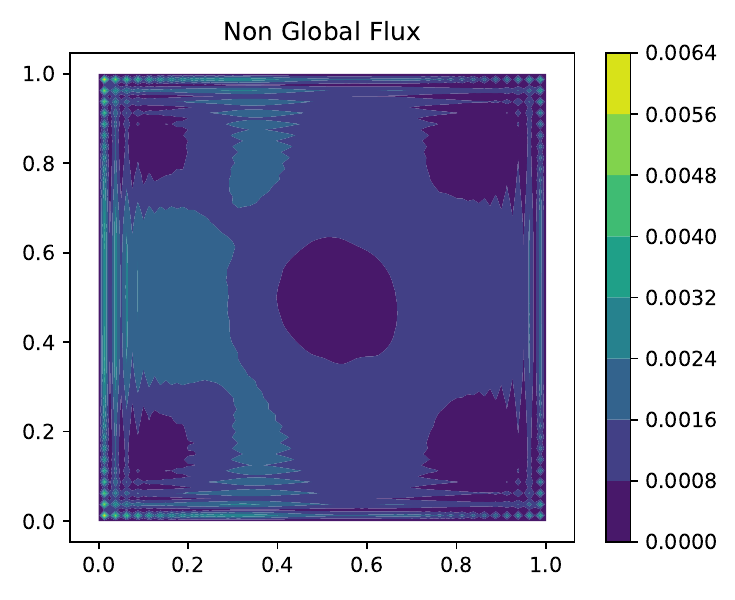}
	\includegraphics[width=0.325\textwidth,trim={0 0 367 0}, clip]{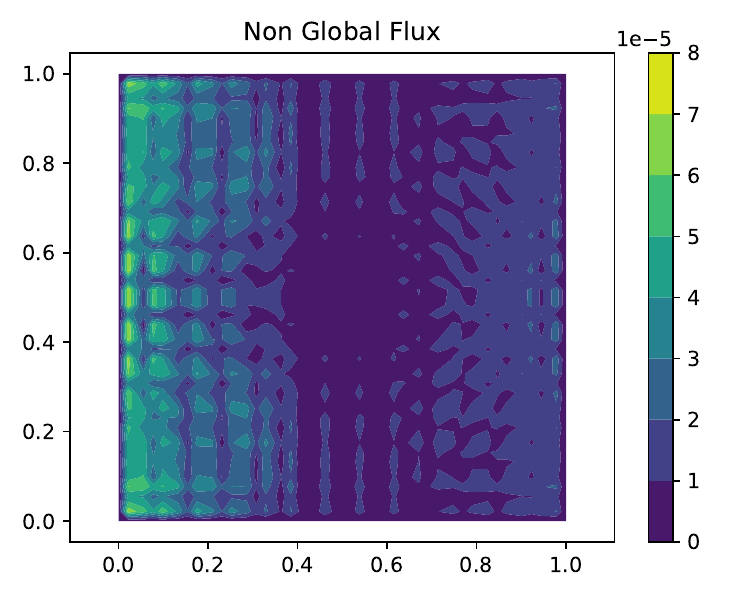}
	\includegraphics[width=0.325\textwidth,trim={0 0 367 0}, clip]{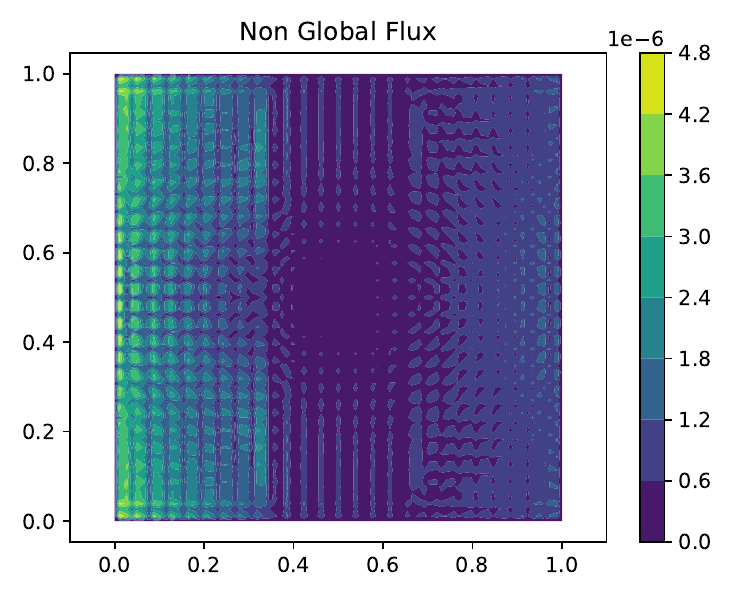}\\
	\includegraphics[width=0.325\textwidth,trim={352 0 15 0}, clip]{figures/LinAc2D_SG_num_perturbation/diff_vel_norm_ord2_N0080_pert_1.000e-03}
	\includegraphics[width=0.325\textwidth,trim={352 0 15 0}, clip]{figures/LinAc2D_SG_num_perturbation/diff_vel_norm_ord4_N0013_pert_1.000e-05}
	\includegraphics[width=0.325\textwidth,trim={352 0 15 0}, clip]{figures/LinAc2D_SG_num_perturbation/diff_vel_norm_ord4_N0026_pert_1.000e-08}
	\caption{Perturbation test for SG: long-time equilibrium solution. Plot of $\lVert\vec{u}_{eq}-\vec{u}_p\rVert$, with $\vec{u}_{eq}$ the analytical equilibrium. Left $\mathbb Q^1$ with $80 \times 80$ cells $\varepsilon=10^{-3}$, center $\mathbb Q^3$ with 13 cells $\varepsilon=10^{-5}$, right $\mathbb Q^3$ with 26 cells $\varepsilon=10^{-8}$}\label{fig:perturbation_numerical_SG}
\end{figure}

Finally, we run a perturbation test.  To this end, for each method we run a long time simulation up to $T=100$. We consider the state obtained to be 
 a reasonable discrete equilibrium.  
Then, we add to each of these discrete states the perturbation \eqref{eq:pressure_perturbation}, for      values of $\varepsilon$ depending on the 
underlying accuracy: $\varepsilon=10^{-3}$ for   $\mathbb Q^1$; $\varepsilon=10^{-5}$ for   $\mathbb Q^2$;  $\varepsilon=10^{-8}$ for   $\mathbb Q^3$.  
We evolve these perturbations  up to $T=0.35$.
In Figure~\ref{fig:perturbation_numerical_SG}, we see again that the perturbations, even very small ones, are well represented within few cells for the SU-GF scheme, 
while the standard one produces spurious waves resulting from  the interaction with the source terms. 
As in all other cases, also for this more physical and complex test the GF schemes outperform the standard ones.

\section{Conclusions and outlook}\label{sec:conclusions}

In this work, we have generalized the steady state preserving Finite Element approach based on   multi-dimensional  GF quadrature  of \cite{brt25} by 
including 
source terms of quite general forms.
The relevant equilibria here are not necessarily described by  a purely solenoidal constraint and a constant pressure.
More general non-trivial states are determined by the balance between the derivatives of the unknowns in different directions and the source term. 
With   GF quadrature   the equations are rewritten in such a way that all these terms are treated simultaneously,
and   stationarity preserving stabilized Finite Element methods can be derived easily.
We have characterized the discrete steady states of these methods,
with a particular focus on the role of boundary conditions at steady state. We have shown applications with schemes up to order 6 to genuinely multi-dimensional 
benchmarks including not only  Coriolis terms, as done in other works, but more general  mass and momentum sources, 
including a physical test case
well known in the meteorology community. All the results confirm the theoretical predictions:
the new Finite Element schemes obtained via the multi-dimensional GF formulation 
are  nodally super-convergent  at steady state if Gauss-Lobatto points are used. Because of stationarity preservation and super-convergence, the new methods significantly outperform standard ones
when considering unsteady setups close to steady states.  

Future work will be dedicated to a generalization of  the present continuous stabilized Finite Element approach to non-linear systems of balance laws, as well as to formulations based on discontinuous approximations. A finite-volume version of the same idea is discussed in   \cite{bcrt25} showing similar  enhanced features.

\section*{Acknowledgements}

MR is a member of the team CARDAMOM, Inria research center at the University of Bordeaux.\\

\section*{Data availability}
The data that support the findings of this study are available from the corresponding author,   upon reasonable request.\\

\section*{Credits}

All authors contributed to the study conception and design.  Code development  
performed by Davide Torlo.  
The first draft of the manuscript was written by Mario Ricchiuto and all authors commented on previous versions of the manuscript. 
All authors read and approved the final manuscript.

\appendix

\section{Energy stability}
%
%
%
%

We study here the energy stability of the variational forms associated to the SUPG and  OSS method, including Coriolis, friction, and space dependent sources, including the SU and OSS stabilizations.
The results are a generalization of those already presented e.g. in \cite{CODINA1997373,CODINA20001579,michel2021spectral,michel2022spectral,brt25}.

\subsection{Energy stability: SU stabilization}\label{sec:app-stabSU} 

Consider a linear operator $\mathcal L \colon (C^1)^m\to (C^1)^m$, $$\mathcal L = \sum_{i=1}^d \mathcal A_i \frac{\del}{\del x_i} + B, \qquad \qquad A_0, \ldots, A_d, B \in \mathscr M^{m \times m}(\mathbb R) \text{ constant matrices with real entries}$$ and the PDE $\del_t \mathsf{Q} + \mathcal L \mathsf{Q} = 0$, $\mathsf{Q} \colon \mathbb R^+_0 \to (C^1)^m$. Consider a symmetric matrix $M$ such that
\begin{align}
 E = \frac12 \int\limits_{\Omega} \mathsf{Q}^t M \mathsf{Q} \, \dd \vec x =:\frac12 (\mathsf{Q}, \mathsf{Q})_M =: \frac12 \| \mathsf{Q} \|^2_M
\end{align}
is the relevant energy. Define the adjoint $\mathcal L^\dagger$ of $\mathcal L$ as $(Q_1, \mathcal L Q_2)_M = (\mathcal L^\dagger Q_1, Q_2)_M$ $\forall Q_1,Q_2$, i.e.
\begin{align}
 \mathcal L^\dagger= -\sum_{i=1}^d \mathcal A_i^t \frac{\del}{\del x_i}.
\end{align}
Energy preservation follows if $\mathcal L$ is skew-adjoint, i.e. if $\mathcal L = - \mathcal L^\dagger$. For linear acoustics, this is the case since
\begin{align}
 \mathcal L = \left( \begin{array}{cc} 0_{d \times d} & \mathrm{grad} \\ \mathrm{div} & 0 \end{array} \right)
\end{align}
and $M=\id_{(d+1) \times (d+1)}$.

Consider now the PDE $\partial_t\mathsf{Q} + \mathcal L \mathsf{Q} = \mathsf{S}$ with sources $\mathsf{S} \colon (C^1)^m \to (C^1)^m$.
Assume homogeneous or periodic boundary conditions.
The SUPG method, obtained as the variational form combining  \eqref{eq:centralGalerkin_bilinear} and \eqref{eq:SUPG_stab}, with $\mathsf w$ a test function and $\tau \in \mathbb R$ reads
\begin{align}
 \int\limits_{\Omega} (\mathsf{w} + \tau \mathcal L \mathsf{w})^t M( \partial_t \mathsf{Q} + \mathcal L \mathsf{Q} - \mathsf{S}) \, \dd \vec x &= 0 .
\end{align}
Using as test function $\mathsf{w}:=\mathsf{Q} + \tau \del_t \mathsf{Q} - \tau \mathsf{S}$ leads to
\begin{align}
 0 &= \Big(\mathsf{Q} + \tau (\del_t \mathsf{Q} + \mathcal L \mathsf{Q} - \mathsf{S}) + \tau^2 \mathcal L\del_t \mathsf{Q}-  \tau^2 \mathcal L \mathsf{S} ,  \partial_t \mathsf{Q} + \mathcal L \mathsf{Q} - \mathsf{S}\Big)_M\\
\begin{split}
&=\big( \mathsf{Q}, \partial_t \mathsf{Q} \big)_M + \cancel{\big( \mathsf{Q}, \mathcal L \mathsf{Q} \big)_M}-\big( \mathsf{Q}, \mathsf{S}\big)_M
 +\tau  \Big( \del_t \mathsf{Q} + \mathcal L \mathsf{Q} - \mathsf{S}, \partial_t \mathsf{Q} + \mathcal L \mathsf{Q}- \mathsf{S}\Big)_M
 \\
 &\qquad +\tau^2 \Big( \mathcal L \del_t \mathsf{Q},  \partial_t \mathsf{Q} +  \mathcal L \mathsf{Q} - \mathsf{S} \Big)_M - \tau^2 \Big( \mathsf{S}, \mathcal L^\dagger  ( \partial_t \mathsf{Q} + \mathcal L \mathsf{Q} - \mathsf{S}) \Big)_M 
\end{split} \\ 
\begin{split} &= \frac{\dd}{\dd t} \left(E + \frac12 \tau^2 (\mathcal L \mathsf{Q}, \mathcal L \mathsf{Q})_M \right) -\big( \mathsf{Q}, \mathsf{S}\big)_M  + \tau \| \del_t \mathsf{Q} + \mathcal L \mathsf{Q} - \mathsf{S}\|^2_M+ \tau^2 \cancel{\big( \mathcal L\del_t \mathsf{Q},  \del_t \mathsf{Q}\big )_M }
 \\
 &\qquad \cancel{-\tau^2 \big( \mathcal L \del_t \mathsf{Q},  \mathsf{S} \big)_M 
 \quad - \tau^2 \big( \mathsf{S}, \mathcal L^\dagger   \partial_t \mathsf{Q}  \big)_M }
 - \tau^2 \big( \mathcal L\mathsf{S},   \mathcal L \mathsf{Q}  \big)_M 
 + \tau^2 \cancel{\big( \mathsf{S}, \mathcal L^\dagger \mathsf{S}) \big)_M}, 
\end{split} \\ 
\end{align}
where all the terms  leading to integrals of the type $ \big( (\cdot), \mathcal L(\cdot) \big)_M$ lead to integrals of divergences and cancel out when assuming homogeneous or periodic
boundary conditions or, equivalently, because they are skew-adjoint. We are thus left with
%
%
$$
\frac{\dd}{\dd t} \left(E + \frac12 \tau^2 (\mathcal L \mathsf{Q}, \mathcal L \mathsf{Q})_M \right) = 
\left(\mathsf{Q} ,\mathsf{S}\right)_M +\tau^2 (\mathcal L\mathsf{S},\mathcal L\mathsf{Q})_M  -  \tau \| \del_t \mathsf{Q} + \mathcal L \mathsf{Q} - \mathsf{S}\|^2_M
 \leq 
\left(\mathsf{Q} ,\mathsf{S}\right)_M +\tau^2 (\mathcal L\mathsf{S},\mathcal L\mathsf{Q})_M.
$$

It is interesting to note that for the system under consideration, 
the SUPG energy is a triple norm associated to the following scalar product associated to the mixed space $H(\text{div})-H^1$:
\begin{align}
(\mathsf{Q}_1,\mathsf{Q}_2)_{\text{SUPG}} &= \mathsf{Q}_1^t\mathsf{Q}_2  +  \tau^2( \nabla\cdot(u_1,v_1)\nabla\cdot(u_2,v_2)  +  \nabla p_1 \cdot   \nabla p_2)\\
E_{\text{SUPG}} &= 
\dfrac{1}{2}(\mathsf{Q},\mathsf{Q})_{\text{SUPG}}.
\end{align}
The stability estimate   can be written in terms of this scalar product as 
$$
\frac{\dd}{\dd t}E_{\text{SUPG}} 
\le  (\mathsf{Q},\mathsf{S})_{\text{SUPG}} .
$$

\subsection{Energy stability: OSS  stabilization}\label{sec:app-stabOSS} 

For the OSS method we can proceed in a similar abstract manner. We write the stabilized variational form as:
find $\mathsf{Q}\in (V_h)^{d+1}$ such that for every $\mathsf{w} \in (V_h)^{d+1}$
\begin{equation}
	\begin{cases}
	\left(\mathsf{w},\partial_t \mathsf{Q} + \mathcal{L} \mathsf{Q}-\mathsf{S}\right) _M + \tau \left(\mathcal L \mathsf{w}, \mathcal{L}\mathsf{Q}-\mathsf{S}  - \Pi \right)_M=0,\\
	\left(\mathsf{w},\Pi - (\mathcal{L}\mathsf{Q}-\mathsf{S}) \right)_M=0.
	\end{cases}	
\end{equation}
We substitute $\mathsf{w}=\mathsf{Q}$ in the first equation, and $\mathsf{w} = \Pi$ in the second and summing the two, we obtain
$$
\begin{aligned}
	0=&\left(\mathsf{Q},\partial_t \mathsf{Q} \right)_M + \cancel{\left(\mathsf{Q},\mathcal{L} \mathsf{Q} \right)_M}- \left(\mathsf{Q},\mathsf{S} \right)_M + \tau \left(\mathcal L \mathsf{Q}, \mathcal{L}\mathsf{Q}-\mathsf{S}  - \Pi \right)_M+\left(\Pi,\Pi - (\mathcal{L}\mathsf{Q}-\mathsf{S}) \right)_M\\
	=&\left(\mathsf{Q},\partial_t \mathsf{Q} \right)_M- \left(\mathsf{Q},\mathsf{S} \right)_M + \tau \left(\mathcal{L}\mathsf{Q}-\mathsf{S}  - \Pi,\mathcal{L}\mathsf{Q}-\mathsf{S}  - \Pi\right)_M +\tau\cancel{\left(\mathsf{S},\mathcal{L}\mathsf{Q}-\mathsf{S}  - \Pi\right)_M}.
	\end{aligned}
	$$
	This leads to the final estimates
	$$
 \frac{\dd}{\dd t} E = \left(\mathsf{Q},\mathsf{S} \right)_M -   \tau \|\mathcal{L}\mathsf{Q}-\mathsf{S}  - \Pi,\mathcal{L}\mathsf{Q}-\mathsf{S}  - \Pi\|^2_M \leq \left(\mathsf{Q},\mathsf{S} \right)_M.
$$


 \section{Proof of the consistency estimates for $S=S(x,y)$}\label{app:cons-proof1}

We prove the result of proposition \ref{th:consistency1} considering the case $S=S(x,y)$ for simplicity.  Let us first consider the  pressure equation.
By hypothesis the identity  $\partial_xu_e +\partial_y v_e=S_p(x,y)$ 
is true in every point. It is thus true at every collocation point, and thus   true for the interpolated  steady operator, namely  
   $$
   \partial_xu_e(x_\alpha,y_\beta)+\partial_yv_e(x_\alpha,y_\beta) =S_p(x_\alpha,y_\beta)\;\forall  (x_\alpha,y_\beta)  \Longrightarrow (\partial_xu_e)_h + (\partial_yv_e)_h =(S_p)(x,y)_h .
   $$
   Crucial here is the fact that all the functions ($\partial_xu_e,\,\partial_yv_e$ and $S_{p}$) are interpolated in the same functional space, so the interpolation of zero
   is consistent.
Consider now the array composed by the sampled values of each term of the point-wise exact  operators:
$$
[\mathrm{D}u_x^{e} ]_{\alpha,\beta} :=    \partial_xu_e(x_\alpha,y_\beta)\;,\quad
[\mathrm{D}v_y^{e} ]_{\alpha,\beta} :=    \partial_yv_e(x_\alpha,y_\beta)\;,\quad
[\mathrm{S}_p ]_{\alpha,\beta} :=   S_p(x_\alpha,y_\beta)  .
$$
Trivially we have $\mathrm{D}u_x^{e}+\mathrm{D}v_y^{e}-\mathrm{S}_p^{e}=0$.
We thus  start from the non-stabilized GF  weak  form  and add and remove appropriate terms to  errors in the projected fluxes.  In particular, we remove from the discrete 
right hand side  the array 
of  the sampled values of the operator  (which is still denoted by  $(\partial_xu_e)_h + (\partial_yv_e)_h -(S_p)_h $  with some abuse of notation to allow
matching terms in the proof): 
 \begin{equation*}
 	\begin{split}
 &(D_x \otimes D_y I_y  ) \mathrm{u}+  (D_x I_x \otimes D_y) \mathrm{v}  - (D_x I_x \otimes D_y I_y) \mathrm{S}_p  \\&\phantom{mmm}- (D_x\otimes D_y)(I_x \otimes I_y) (\underbrace{\mathrm{D}u_x^{e}+\mathrm{D}v_y^{e}-\mathrm{S}_p^{e}}_{\equiv 0})\\ 
&= (D_x  \otimes D_y I_y )( \mathrm{u}  -  (I_x\otimes \iden_y)\mathrm{D}u_x^{e} )   + (D_x I_x \otimes D_y)  ( \mathrm{v} -   (\iden_x\otimes I_y)( \partial_yv_e)_h )  \\   &\phantom{mmm} - \cancel{  (D_x I_x \otimes D_y I_y) \mathrm{S}_p +  (D_x I_x \otimes D_y I_y) \mathrm{S}_p}.
 \end{split}
\end{equation*}

For a fixed $y=y_{\beta}$, the term  $ \mathrm{u}  -  (I_x\otimes \iden_y)( \partial_xu_e)_h$ corresponds, up to a function of $y$ only, to the integration of 
$$
u'(x,y_{\beta})=\partial_xu_e(x, y_{\beta}).
$$
using the ODE solver associated to $I_x$. The same can be said for the term $ \mathrm{v} -   (\iden_x\otimes I_y)( \partial_yv_e)_h$. We can thus
construct discrete states  by marching along
grid-lines using the ODE solvers defined by $I_x$ and $I_y$ by 
\begin{equation}\label{eq:proj1}
\begin{aligned}
u'(x,y_{\beta})= &\partial_xu_e(x, y_{\beta})= -(  \partial_yv_e(x,y_\beta) -S_p(x,y_\beta)), \\
v'(x_\alpha,y)= &\partial_yv_e(x_\alpha, y)=- ( \partial_xu_e(x_\alpha,y)-S_p(x_\alpha,y)).
\end{aligned}
\end{equation}
By construction the resulting nodal values  $u_h(x_\alpha,y_\beta)$ and $v_h(x_\alpha,y_\beta)$ verify
\begin{equation}\label{eq:uproj1}
\mathrm{u}_h+   (I_x\otimes \iden_y)       ( (\partial_y v_e)_h  -(S_p)_h)= \textrm{c}_x(y)\;,\quad
v_h+ (\iden_x \otimes I_y)((\partial_xu_e)_h -(S_p)_h)= \textrm{c}_y(x) 
\end{equation}
and are in the kernel of the above discrete operator. Moreover, they satisfy by construction
the boundary estimate $(\tilde{ \vec{v}}^{\partial})_{\partial  \Omega}=\tilde{ \vec{v}}^{\partial}_e$ with perturbations
associated to the order $\mathcal{O}(h^M)$ of the ODE integrators, while on the  boundaries  $(x_0,y_\beta)$ and $(x_\alpha,y_0)$
we have exactly    $(\tilde{ \vec{v}}^{\partial})_{\partial  \Omega}=\vec{v}^{\partial}_e$  
  as initial conditions   in \eqref{eq:proj1}.
This can be shown by considering that for an arbitrary point $(x_\alpha,y_\beta)$ in the domain we have
\begin{equation*}
\begin{split}
u_h(x_\alpha,y_\beta) =  u_e(x_0,y_\beta) - \int_{x_0}^{x_\alpha}(\partial_y v_e(x,y_\beta) -(S_p)_h(x,y_\beta))\dd x + \mathcal{O}(h^{M})=u_e(x_\alpha, y_\beta)+ \mathcal{O}(h^{M}), \\
v_h(x_\alpha,y_\beta) =  v_e(x_\alpha,y_0) - \int_{y_0}^{y_\beta}(\partial_xu_e(x_\alpha, y)-(S_p)_h(x_\alpha,y))\dd y + \mathcal{O}(h^{M})=v_e(x_\alpha, y_\beta)+ \mathcal{O}(h^{M}).
\end{split}
\end{equation*}

For the pressure we can immediately remark that the discrete variational form  \eqref{eq:centralGalerkinGF_matrix_collected} allows to define
two separate discrete pressures that verify
\begin{equation}\label{eq:pproj1}\begin{aligned}
\hat p_h(x_\alpha,y_\beta) =  p_e(x_0,y_\beta) + \int_{x_0}^{x_\alpha} (S_u)_h(x,y_\beta)  \dd x  =p_e(x_\alpha, y_\beta)+ \mathcal{O}(h^{M}), \\
\bar p_h(x_\alpha,y_\beta) =  p_e(x_\alpha,y_0) + \int_{y_0}^{y_\beta} (S_v)_h(x_\alpha,y)\dd y  =p_e(x_\alpha, y_\beta)+ \mathcal{O}(h^{M}).
\end{aligned}
\end{equation}
By constructions these pressures   satisfy the global error estimate of order $\mathcal{O}(h^M)$, and $\hat p_h$ is in the kernel of the first equation of  \eqref{eq:centralGalerkinGF_matrix_collected} and $\bar p_h$ is in the kernel of the second equation of \eqref{eq:centralGalerkinGF_matrix_collected}, since they verify
$$
\begin{aligned}
\hat p_h - (I_x\otimes \iden_y) (S_u)_h= \textrm{c}_x(y)\;,\quad
\bar p_h - (\iden_x \otimes I_y)(  S_v)_h= \textrm{c}_y(x)\,.
\end{aligned}
$$
A unique  solution of the discrete system is identified up to a  compatible  $\mathcal{O}(h^M)$ perturbations of the  boundary data. 
A possibility is to set 
$$
 p(x_\alpha,y_\beta) = \bar p(x_0,y_\beta)  + \int_{x_0}^{x_\alpha} (S_u)_h(x,y_\beta)  \dd x =  p_e(x_0,y_0) + \int_{y_0}^{y_\beta} (S_v)_h(x_0,y)\dd y  +\int_{x_0}^{x_\alpha} (S_u)_h(x,y_\beta)  \dd x 
$$
or equivalently 
$$
 p(x_\alpha,y_\beta) = \hat p(x_\alpha,y_0) + \int_{y_0}^{y_\beta} (S_v)_h(x_\alpha,y)\dd y =p_e(x_0,y_0) + \int_{x_0}^{x_{\alpha}} (S_u)_h(x,y_0)  \dd x  + \int_{y_0}^{y_\beta} (S_v)_h(x_\alpha,y)\dd y 
$$
or any linear combination $\forall \lambda \in \mathbb{R}$:
\begin{equation}\label{eq:pressureBC}
\begin{aligned}
 p(x_\alpha,y_\beta)  =     p_e(x_0,y_0) +   \lambda & \left[  \int_{y_0}^{y_\beta} (S_v)_h(x_0,y)\dd y  + \int_{x_0}^{x_\alpha} (S_u)_h(x,y_\beta)  \dd x \right]\\
 +  (1-\lambda) &\left[  \int_{x_0}^{x_{\alpha}} (S_u)_h(x,y_0)  \dd x  + \int_{y_0}^{y_\beta} (S_v)_h(x_\alpha,y)\dd y  \right].
\end{aligned}
\end{equation}
To conclude, we note that  the  collocation method associated to the integration tables obtained from the Lagrange bases on Gauss-Lobatto points
is the well known LobattoIIIA method, which verifies the hypotheses of the proposition with $M=K+2$ when using  $K+1$ points (interpolation polynomial degree  $K$) and $K\ge 2$ \cite{hairer}.
For $K=1$ the classical $h^2$ consistency applies. This completes the proof.

\bibliographystyle{abbrv}
\bibliography{biblio}

\end{document}